\newcommand{\R}{{\mathbb R}}
\newcommand{\N}{{\mathbb N}}
\newcommand{\EE}{{\mathbb E}}
\newcommand{\EEE}{\operatorname{E}}
\newcommand{\PP}{{\mathbb P}}
\newcommand{\E}{{\mathbb E}}
\newcommand{\A}{\mathcal A}
\newcommand{\cF}{\mathcal F}
\newcommand{\ccF}{\mathcal G}
\newcommand{\Hc}{\mathcal H}
\newcommand{\V}{\mathcal V}
\newcommand{\cY}{\mathcal Y}
\newcommand{\ccFt}{\widetilde {\mathcal G}}
\newcommand{\cP}{\mathcal P}
\newcommand{\Xh}{\widehat X}
\newcommand{\Sh}{\widehat S}
\newcommand{\Adet}{\mathcal{A}^{\text{det}}}
\newcommand{\Aran}{\mathcal{A}^{\text{ran}}}
\newcommand{\edet}{e^{\text{det}}}
\newcommand{\eran}{e^{\text{ran}}}
\newcommand{\cost}{\operatorname{cost}}
\newcommand{\eq}{{\mathcal{E}}}
\newcommand{\1}{1}
\newcommand{\sSDE}{S^\text{sde}}
\newcommand{\ff}{g}
\newcommand{\eps}{\varepsilon}
\newcommand{\thet}{\theta}
\newcommand{\hehe}{h}
\newcommand{\xl}{x_\delta}
\newcommand{\vl}{\zeta_u}
\theoremstyle{plain}
\newtheorem{theorem}{Theorem}
\newtheorem{prop}{Proposition}
\newtheorem{lemma}{Lemma}
\newtheorem{cor}{Corollary}
\theoremstyle{definition}
\newtheorem{rem}{Remark}
\begin{document}

\title[]{On hard  quadrature problems \\ for marginal distributions of SDEs \\
with bounded smooth coefficients}

\author[M\"uller-Gronbach]
{Thomas M\"uller-Gronbach}
\address{
Fakult\"at f\"ur Informatik und Mathematik\\
Universit\"at Passau\\
Innstrasse 33 \\
94032 Passau\\
Germany} \email{thomas.mueller-gronbach@uni-passau.de}

\author[Yaroslavtseva]
{Larisa Yaroslavtseva}
\address{
Fakult\"at f\"ur Informatik und Mathematik\\
Universit\"at Passau\\
Innstrasse 33 \\
94032 Passau\\
Germany} \email{larisa.yaroslavtseva@uni-passau.de}

\begin{abstract}
In recent work of Hairer, Hutzenthaler and Jentzen, see~\cite{hhj12}, a stochastic differential equation (SDE) with infinitely often differentiable and
bounded coefficients
 was
constructed
such that the Monte Carlo Euler method for
approximation of the expected value
of the first component
of the solution at the final time converges but fails to achieve a mean square error of
    a polynomial rate.
In the present paper we show that this type of bad performance for
quadrature of SDEs with infinitely often differentiable and bounded
coefficients is not a shortcoming of the Euler scheme in particular
but can be observed in a worst case sense for every approximation
method that is based on finitely many function values of the
coefficients of the SDE. Even worse we show that for any sequence of
Monte Carlo methods based on finitely many sequential evaluations of
the coefficients and all their partial derivatives and for every
arbitrarily slow convergence speed there exists a sequence of SDEs
with infinitely often differentiable and bounded by one coefficients
such that the first order derivatives of all diffusion coefficients
are bounded by one as well and the first order derivatives of all
drift coefficients are uniformly dominated by a single real-valued
function and such that the corresponding sequence of mean absolute
errors for approximation of the expected value of the first
component of the solution at the final time can not converge to zero
faster than the given speed.
\end{abstract}

\maketitle

\section{Introduction}
Let $d,m\in \N$ and consider a $d$-dimensional system of autonomous SDEs
\begin{equation}\label{sde}
\begin{aligned}
\phantom{\quad\qquad t\in [0,1],}
dX^{a,b}(t) & = a(X^{a,b}(t)) \, dt + b(X^{a,b}(t)) \, dW(t), \quad t\in [0,1],\\
X^{a,b}(0) & = 0,
\end{aligned}
\end{equation}
with an $m$-dimensional Brownian
motion $W$ and infinitely often differentiable, bounded coefficients
$a\colon\R^d \to \R^d$ and $b\colon\R^d\to \R^{d\times m}$. In
particular, there exists a unique
strong solution $X^{a,b}=(X_1^{a,b},\dots,X_d^{a,b})$ of \eqref{sde}, see, e.g.,
Theorem 3.1.1 in \cite{PrevotRoeckner2007},
 and we have $\EEE[|X^{a,b}(1)|^p] <
\infty$ for every $p \geq 1$. Let $f\colon \R^d\to\R$ be a measurable function that
satisfies a polynomial growth condition.
   We study the computational task
to approximate the quantity
\[
S(a,b,f)=\EEE \bigl[f (X^{a,b}(1))\bigr]
\]
by means of an algorithm that uses function values of $a$, $b$
and $f$ and, eventually, their partial derivatives $D^\alpha a$, $D^\alpha b$, $D^\alpha f$ at finitely many points in $\R^d$.

A classical method of this type is given by the Monte Carlo
quadrature rule $\widehat S_n^\text{E}$ based on $n$ repetitions of
the Euler scheme $(\Xh^{a,b}(\ell/n))_{\ell=0,\dots,n}$  with time
step $1/n$, i.e.,
\[
\widehat S_n^\text{E} (a,b,f) = \frac{1}{n} \sum_{i=1}^n f(Y^{a,b}_i),
\]
where $Y^{a,b}_1,\dots,Y^{a,b}_n$ are independent and identically distributed as  $\Xh^{a,b}(1)$ and the scheme $\Xh^{a,b}$ is recursively defined by $\Xh^{a,b}(0) = 0$ and
\[
\Xh^{a,b}(\tfrac{\ell}{n}) = \Xh^{a,b}(\tfrac{\ell-1}{n}) + a(\Xh^{a,b}(\tfrac{\ell-1}{n}))\cdot \tfrac{1}{n} + b(\Xh^{a,b}(\tfrac{\ell-1}{n}))\cdot (W(\tfrac{\ell}{n})-W(\tfrac{\ell-1}{n}))
\]
for $\ell = 1,\dots,n$. Then it is easy to see that
\[
\EE\bigl[|S(a,b,f)- \widehat S_n^\text{E}(a,b,f)|^2\bigr] \le
c_1\cdot \|f\|_{\text{Lip}}^2\cdot \exp\bigl(c_2\cdot \max_{\alpha\in \N_0^d, |\alpha|_1 = 1}(\|D^\alpha a\|^2_\infty+\|D^\alpha b\|^2_\infty)\bigr)\cdot \frac{1}{n},
\]
where
 $\|f\|_{\text{Lip}}$ denotes the Lipschitz seminorm of $f$, $|\alpha|_1 = \sum_{i=1}^d |\alpha_i|$  and
 $c_1$ and $c_2$ are positive reals, which only depend on the dimensions $d$ and $m$.
Thus, if the first order partial derivatives of the coefficients $a$ and $b$ are
   also bounded and the integrand $f$ is Lipschitz continuous then
the sequence of Monte Carlo Euler approximations $\widehat S_n^\text{E}(a,b,f)$
achieves a polynomial rate of root mean square error convergence of order $1/4$ in terms of the total number $2n(n-1)+n$ of evaluations of the coefficients $a$ and $b$ and the integrand $f$.

On the other hand, Hairer, Hutzenthaler and Jentzen  have presented in \cite{hhj12} an equation  \eqref{sde} with $d=4$, $m=1$ and infinitely often differentiable, bounded coefficients $a,b$ such that
for the integrand $f(x_1,\dots,x_d)=x_1$ and every $\kappa \in (0,\infty)$,
\begin{equation}\label{zz1}
\lim_{n\to\infty} n^\kappa \cdot \EE\bigl[|S(a,b,f)- \widehat S_n^\text{E}(a,b,f)|^2\bigr] = \infty.
\end{equation}
Hence the sequence of  Monte Carlo Euler  approximations $\widehat S_n^\text{E}(a,b,f)$
 might not achieve a polynomial rate of root mean square error convergence in terms of the number of evaluations of the coefficients $a$ and $b$ and the integrand $f$ if the first order partial derivatives of the coefficients are not bounded as well.

It seems natural to ask, whether the latter result demonstrates only a particular fallacy of the Monte Carlo Euler method and a polynomial rate of convergence could always be achieved for equations \eqref{sde} with infinitely often differentiable and bounded coefficients $a$, $b$ if only a more advanced approximation scheme than the Euler scheme would be employed. In fact, there is a variety of strong approximation schemes available in the literature that have been constructed to cope with non-Lipschitz
   continuous
coefficients and have been shown to achieve a polynomial rate of convergence, in terms of the number of
   time
steps, for   suitable classes of such equations. See, e.g., \cite{h96,hms02,Schurz2006,MaoSzpruch2013Rate,HutzenthalerJentzenKloeden2012,
WangGan2013,Sabanis2013ECP,Sabanis2013Arxiv,Beynetal2014,TretyakovZhang2013,
Beynetal2015,KumarSabanis2016}
for equations with
   globally
 monotone coefficients and see, e.g.,
 \cite{BerkaouiBossyDiop2008,GyoengyRasonyi2011,DereichNeuenkirchSzpruch2012,
Alfonsi2013,NeuenkirchSzpruch2014,HutzenthalerJentzen2014,
HutzenthalerJentzenNoll2014CIR,ChassagneuxJacquierMihaylov2014}  for equations with possibly non-monotone coefficients.

However, the following result, Theorem \ref{ti1}, which is a straightforward consequence of Corollary \ref{cor1}
in Section~\ref{sub4-1}, shows that for $d\ge 4$ the pessimistic alternative is true in a worst case sense with respect to the coefficients $a$ and $b$.
For every sequence of Monte Carlo methods based on some kind of It\^o-Taylor scheme there exists a sequence of equations \eqref{sde} with
infinitely often differentiable and bounded by one coefficients  such that the resulting sequence of mean absolute errors
for approximating the expected value of the first component of the solution at the  final time does not converge to zero with a polynomial rate.

To state this finding in a more formal way
let
\[
I=\bigcup_{k=1}^\infty \{0,1\}^k
\]
denote  the set of all finite sequences of zeros and ones, put
$W_0(t)=t$ for $t\in[0,1]$, and for $\beta\in I$, $n\in\N$ and
$\ell\in\{1,\dots,n\}$ let
\[
J_{n,\ell}^\beta = \int_{\tfrac{\ell-1}{n}}^{\tfrac{\ell}{n}}\dots \int_{\tfrac{\ell-1}{n}}^{u_2} 1\,dW_{\beta_1}(u_1)\dots dW_{\beta_\ell}(u_\ell)
\]
denote the corresponding iterated It\^o-integral over the time interval $[\tfrac{\ell-1}{n},\tfrac{\ell}{n}]$.

\begin{theorem}\label{ti1}
Let $d=4, m=1$
and let $\varphi\colon (\R^4\times \R^{4})^{\N_0^4}\times \R^I\to \R^4$ be a measurable mapping. For all infinitely often differentiable functions $a\colon \R^4\to \R^4$ and $b\colon \R^4\to \R^{4}$ and every $n\in\N$ define the scheme $\bigl(\Xh_n^{a,b}(\tfrac{\ell}{n})=(\Xh_{n,1}^{a,b},\dots,\Xh_{n,4}^{a,b})(\tfrac{\ell}{n}) \bigr)_{\ell=0,\dots,n}$ by $\Xh_n^{a,b}(0) = 0$ and
\[
\Xh_n^{a,b}(\tfrac{\ell}{n}) = \Xh_n^{a,b}(\tfrac{\ell-1}{n}) + \varphi\bigl((D^\alpha a(\Xh_n^{a,b}(\tfrac{\ell-1}{n}) ),D^\alpha b(\Xh_n^{a,b}(\tfrac{\ell-1}{n}) ))_{\alpha\in \N_0^4}, (J^\beta_{n,\ell})_{\beta\in I}\bigr),
\]
and let $Y^{a,b}_{n,1},\dots,Y^{a,b}_{n,n}$ be independent and identically distributed as  $\Xh_{n,1}^{a,b}(1)$.

Then there exist sequences $(a_n)_{n\in\N}$ and $(b_n)_{n\in\N}$ of infinitely often differentiable functions $a_n\colon \R^4\to \R^4$ and $b_n\colon \R^4\to \R^4$ with $\|a_n\|_\infty\le 1$ and $ \|b_n\|_\infty\le 1$ such that for every $\kappa \in (0,\infty)$,
\[
\lim_{n\to\infty} n^\kappa \cdot \EE \Bigl[\bigl| \EEE [X^{a_n,b_n}_1(1)] - \frac{1}{n}\sum_{k=1}^n Y^{a_n,b_n}_{n,k}\bigr|\Bigr] = \infty.
\]
\end{theorem}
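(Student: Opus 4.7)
The plan is to derive Theorem \ref{ti1} from the information-based hardness result Corollary \ref{cor1} in Section~\ref{sub4-1}. The Monte Carlo It\^o--Taylor scheme described in the theorem is an instance of an algorithm that, over one Monte Carlo trajectory, uses only finitely many sequential derivative evaluations $(D^\alpha a, D^\alpha b)_\alpha$ of the coefficients together with the iterated It\^o integrals $J^\beta_{n,\ell}$. Corollary \ref{cor1}, phrased at precisely this level of generality, should supply the hard sequences $(a_n, b_n)$ directly.

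First, I would match the set-up of Corollary \ref{cor1} to the present scheme: identify the information acquired by $\varphi$ at each time step $\ell/n$, namely the derivative tuple at $\Xh_n^{a,b}(\tfrac{\ell-1}{n})$ and the iterated integrals $(J^\beta_{n,\ell})_{\beta \in I}$, and verify that $n$ independent Monte Carlo repetitions of this information-gathering process still lie within the information class governed by Corollary \ref{cor1}.

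Second, I would invoke Corollary \ref{cor1} to extract coefficients $(a_n, b_n)$ that are $C^\infty$, bounded by $1$, and force the error to decay more slowly than any polynomial rate. The underlying construction adapts the Hairer--Hutzenthaler--Jentzen idea: design $(a_n, b_n)$ with all partial derivatives vanishing on a set $K_n \subset \R^4$ that contains, with high probability, the range of the scheme trajectory, while placing the nontrivial behavior of the coefficients in a region reached only by continuous Brownian excursions of the true SDE but missed by the discrete iterates with overwhelming probability. On the event that the scheme stays inside $K_n$, its update collapses to $\varphi$ applied to zero derivative data and hence coincides with a coefficient-independent ``phantom'' scheme whose Monte Carlo average has an expectation that does not depend on $(a_n, b_n)$; meanwhile the true expectation $\EEE[X_1^{a_n,b_n}(1)]$ can be arranged, by amplification through the extra coordinates, to differ from the phantom expectation by more than any $n^{-\kappa}$.

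The main obstacle is the tension between the two requirements on $(a_n, b_n)$: on one hand, they and all their derivatives must vanish on the possibly large hidden set $K_n$ while still satisfying $\|a_n\|_\infty, \|b_n\|_\infty \le 1$; on the other hand, the continuous-time SDE must reach the active region of $(a_n, b_n)$ with probability large enough to produce a bias in $\EEE[X_1^{a_n,b_n}(1)]$ that dominates any polynomial in $1/n$. In $d=4$ dimensions this can be achieved by using one coordinate as an unobserved Brownian component that triggers the active support of $(a_n, b_n)$ at a configuration invisible to the scheme, with the remaining coordinates amplifying the resulting first-component expectation. The $O(n^{-1/2})$ Monte Carlo sampling fluctuation is then absorbed by choosing the amplification suitably large, yielding the claimed super-polynomial lower bound.
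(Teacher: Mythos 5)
Your top-level reduction is the same as the paper's: Theorem~\ref{ti1} is stated in the paper explicitly as a consequence of Corollary~\ref{cor1}, and your first paragraph correctly identifies the scheme as an element of $\Aran$ applied to a class $\eq_u$ with a suitable $u$ satisfying \eqref{101} (e.g.\ $u(x)=1+\exp(x^3)$), from which the worst-case coefficients $(a_n,b_n)\in\eq_u$ may be picked to nearly attain $e(\Sh_n)\ge e_{n^2}^{\text{ran}}(\ccF(\eq_u\times\{\pi_1\});\sSDE)$. Two points you should make explicit: (a) the total information cost is $n^2$, not $n$ (one coefficient evaluation per step per trajectory, $n$ trajectories of $n$ steps), so the super-polynomial divergence in $n$ follows from Corollary~\ref{cor1} applied with cost parameter $n^2$; (b) Corollary~\ref{cor1} gives a lower bound on the minimal error, not the sequences directly --- one must still invoke the definition of $e(\Sh_n)$ as a supremum over $\eq_u$ and extract for each $n$ a pair $(a_n,b_n)$ whose error is at least, say, half the supremum.

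Where your proposal genuinely diverges from the paper --- and becomes misleading --- is in the long discussion of the mechanism allegedly underlying Corollary~\ref{cor1}. You describe a ``hidden set $K_n$ / phantom scheme'' construction in which all derivatives of $(a_n,b_n)$ vanish on a region that the discrete iterates never leave. That is not what the paper does (nor is it needed to deduce Theorem~\ref{ti1}). The paper's hardness result (Theorem~\ref{mainThm}, hence Corollary~\ref{cor1}) is an information-based fooling argument in the style of Bakhvalov--Novak, generalized to nonlinear functionals in Propositions~\ref{prop1} and~\ref{prop2}: one packs $\Theta(n)$ bump functions $h_1,\dots,h_m$ with pairwise disjoint supports into $[0,1/2]$ and perturbs the third drift component by $\pm h_i$; an algorithm of cost $n$ can inspect only $O(n)$ nodes, so for a constant fraction of the $h_i$ it cannot distinguish $+h_i$ from $-h_i$, and the resulting signed integral produces the claimed error. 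The role of the four coordinates and of the auxiliary function $v$ is to route this perturbation into $\EEE[X_1^{a,b}(1)]$ and to amplify it through $\rho_3(\tfrac{x_3}{1+x_3^2}v(x_2))$, not to hide the active support from the discrete iterates. Since the paper's lower bound is valid uniformly over \emph{all} randomized sequential algorithms (it does not exploit that the nodes follow a trajectory of the scheme), it is strictly stronger than what a per-scheme ``hidden set'' argument could deliver, and your description would not lead to the bound you need. I would delete that discussion and simply cite Corollary~\ref{cor1} as a black box.
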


The latter result neither covers Multilevel Monte Carlo schemes
   nor the case of a non-uniform discretization of time.
Moreover, one might argue that a result like Theorem~\ref{ti1} is
not surprising since the order one partial derivatives of the chosen
coefficients $a_n$ and $b_n$ in the theorem are not required to
simultaneously satisfy some
 kind of growth condition.
 However, from Corollary \ref{cor1} in Section~\ref{sub4-1} we even obtain that
 the coefficients $a_n$ and $b_n$ in Theorem~\ref{ti1} can be chosen in such a way that the order one partial derivatives of $b_n$ are bounded by one as well and the order one partial derivatives of $a_n$ are dominated by the function $x\mapsto 1 + \exp(|x|^3)$, and
that furthermore the statement of the theorem  extends to any sequence of Monte Carlo  methods based on sequential evaluation of the coefficients $a$ and $b$ and all their partial derivatives $D^\alpha a$, $D^\alpha b$  at finitely many points in $\R^d$. More formally, we have the following theorem as an immediate consequence of Corollary \ref{cor1}.

\begin{theorem}\label{ti2a}
Let $d=4, m=1$ and
let $(\Omega,\A,\PP)$ be a probability space. For every $n\in\N$ let
$\psi_{n,1}\colon \Omega\to\R^4$
as well as
\[
\psi_{n,i}\colon (\R^4\times \R^4)^{\N_0\times \{1,\dots,i-1\}} \times \Omega \to \R^4,\quad i=2,\dots,n,
\]
and
\[
\varphi_n \colon  (\R^4\times \R^4)^{\N_0\times \{1,\dots,n\}}\times \Omega \to \R
\]
be measurable mappings. For all infinitely often differentiable functions $a\colon \R^4\to \R^4$ and $b\colon \R^4\to \R^{4}$ and for every $n\in\N$ define
random variables $Z_{n,1}^{a,b},\dots, Z_{n,n}^{a,b}\colon \Omega \to  (\R^4\times \R^4)^{\N_0}$ by $Z_{n,1}^{a,b}(\omega) = \bigl((D^\alpha a,D^\alpha b)(\psi_{n,1}(\omega))\bigr)_{\alpha\in \N_0^4}$ and
\[
Z_{n,i}(\omega) = \bigl((D^\alpha a,D^\alpha b)(\psi_{n,i}(Z_{n,1}(\omega),\dots,Z_{n,i-1}(\omega),\omega))\bigr)_{\alpha\in \N_0^4}, \quad i=2,\dots,n.
\]

Then  for every $n\in\N$ there exist  infinitely often differentiable functions $a_n,b_n\colon \R^4\to \R^4$ with  $\|a_n\|_\infty\le 1$, $ \|b_n\|_\infty\le 1$ and $\|D^\alpha b_n\|_\infty \le 1$,
$\|D^\alpha a_n/(1+ \exp(|\cdot|^3)\|_\infty \le 1$
 for all $\alpha\in \N_0^4$ with $|\alpha|_1=1$
such that for every $\kappa \in (0,\infty)$,
\[
\lim_{n\to\infty} n^\kappa \cdot  \EE\bigl[|\EEE [X^{a_n,b_n}_1(1)] - \varphi_n(Z_{n,1}^{a_n,b_n},\dots, Z_{n,n}^{a_n,b_n}, \cdot) |\bigr] = \infty.
\]
\end{theorem}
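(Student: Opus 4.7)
The plan is to deduce Theorem~\ref{ti2a} directly from the quantitative Corollary~\ref{cor1} of Section~\ref{sub4-1}, which contains the actual lower-bound construction. Concretely, one should read Corollary~\ref{cor1} as providing, for every $n\in\N$ and every adaptive randomized algorithm based on at most $n$ sequential evaluations of the jets $(D^\alpha a,D^\alpha b)_{\alpha\in\N_0^4}$, a specific worst-case pair $(a_n,b_n)$ in the admissible class---namely the class of infinitely differentiable coefficients with $\|a_n\|_\infty,\|b_n\|_\infty\le 1$, $\|D^\alpha b_n\|_\infty\le 1$ and $\|D^\alpha a_n/(1+\exp(|\cdot|^3))\|_\infty\le 1$ for $|\alpha|_1=1$---realizing an $L^1$ error lower bound $e_n$ that decays more slowly than every polynomial in $n$. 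Theorem~\ref{ti2a} then follows by instantiating Corollary~\ref{cor1} with the specific adaptive algorithm $(\psi_{n,i},\varphi_n)$ prescribed in the statement, since $\lim_{n\to\infty}n^\kappa\cdot e_n=\infty$ for every $\kappa>0$ is exactly the conclusion to be proved.

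To prove Corollary~\ref{cor1} itself, I would follow the template of Hairer, Hutzenthaler and Jentzen and, for each sufficiently large $n$, construct a finite parameter set $\Theta_n$ and a family $\{(a_\theta,b_\theta)\}_{\theta\in\Theta_n}$ of infinitely differentiable coefficients on $\R^4$ with the following features. First, each $(a_\theta,b_\theta)$ sits in the admissible class above. Second, for distinct $\theta,\theta'\in\Theta_n$ the full jets of the coefficients agree outside a small $\theta$-localized region $R_\theta\subset\R^4$, so that the evaluation $(D^\alpha a_\theta,D^\alpha b_\theta)(x)=(D^\alpha a_{\theta'},D^\alpha b_{\theta'})(x)$ for every multi-index $\alpha$ and every $x\notin R_\theta\cup R_{\theta'}$. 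Third, the map $\theta\mapsto\EEE[X_1^{a_\theta,b_\theta}(1)]$ takes values spread by a gap $\delta_n>0$ across a positive fraction of pairs. Fourth, $|\Theta_n|$ grows superpolynomially in $n$. The asymmetric allowance $\|D^\alpha a_\theta/(1+\exp(|\cdot|^3))\|_\infty\le 1$ is the crucial slack that makes the first and third requirements compatible, since a localized drift bump far from the origin can displace the marginal while still respecting the growth envelope, whereas $b_\theta$ must remain globally Lipschitz with constant one.

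Given the family, the lower bound comes from an averaging/counting argument over $\theta\in\Theta_n$. For any fixed algorithm $(\psi_{n,i},\varphi_n)$ and any fixed seed $\omega\in\Omega$, the sequential scheme visits at most $n$ points in $\R^4$, so the number of $\theta\in\Theta_n$ for which some query falls in $R_\theta$ is bounded by $n$. Choosing $|\Theta_n|\gg n$ and applying Fubini over $\theta$, there exists $\theta^\star=\theta^\star(n)\in\Theta_n$ such that, with probability close to one over $\omega$, no query falls in $R_{\theta^\star}$; on that event the output of $\varphi_n$ agrees with its output on a reference pair $(a_\star,b_\star)$ obtained by removing all bumps. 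Combining this with the spread property forces, for a positive fraction of $\theta^\star$, the deterministic target value $\EEE[X_1^{a_{\theta^\star},b_{\theta^\star}}(1)]$ to be separated from $\EEE[X_1^{a_\star,b_\star}(1)]$ by at least $\delta_n$, whence the $L^1$ error is at least of order $\delta_n$. Calibrating $|\Theta_n|$ and $\delta_n$ as functions of $n$ so that $\delta_n$ decays slower than every polynomial rate then yields Corollary~\ref{cor1}.

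The main obstacle, and where the bulk of the technical work should be expected, is the simultaneous realization of the four properties under the stringent bound $\|D^\alpha b_\theta\|_\infty\le 1$. In HHJ-type constructions the non-polynomial behaviour of the marginal is generated by regions where diffusion and drift conspire to trap paths for a non-negligible amount of time; forcing $b_\theta$ to be globally $1$-Lipschitz and, outside $R_\theta$, to coincide with $b_\star$ together with all its derivatives, while still producing a $\theta$-dependent marginal, is the genuinely delicate analytic step. The permissive growth condition on $D^\alpha a_\theta$ is precisely what must be exploited here to relocate the $\theta$-sensitivity into the drift. Once the family is in place, the passage from Corollary~\ref{cor1} to Theorem~\ref{ti2a} is a routine packaging step: apply the corollary to the algorithm $(\psi_{n,i},\varphi_n)$ and read off the sequence $(a_n,b_n)$ together with the superpolynomial error estimate.
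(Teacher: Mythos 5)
Your packaging step is exactly the paper's: Theorem~\ref{ti2a} is an immediate consequence of Corollary~\ref{cor1} with $u(x)=\exp(x^3)$, because the prescribed $(\psi_{n,i},\varphi_n)$ is a randomized algorithm of cost at most $n$ in the sense of Section~\ref{Sec3}, so for each $n$ some $(a_n,b_n)\in\eq_u$ nearly attains the minimal error $e_n^{\text{ran}}(\ccF(\eq_u\times\{\pi_1\});\sSDE)$, which by the corollary is superpolynomially large. That part is fine.

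However, your proof of Corollary~\ref{cor1} (equivalently Theorem~\ref{mainThm}) is only an outline that names the strategy but does not construct anything, and it stops precisely at the step you yourself call ``genuinely delicate.'' Concretely, the missing idea is the specific $4$-dimensional SDE structure that makes the marginal explicitly computable and hence amenable to a Bakhvalov/Novak argument. The paper fixes a single diffusion $b(x)=(0,\rho_1(x_4),0,0)$ and takes drifts of the form $a^{h,v}(x)=\bigl(\rho_2(x_4)\rho_3\bigl(\tfrac{x_3}{1+x_3^2}v(x_2)\bigr),\,0,\,h(x_4),\,1\bigr)$. With this structure $X_4(t)=t$ is a clock, $X_3$ deterministically accumulates $\int h$, $X_2(1/2)\sim N(0,c_{\rho_1})$, and $X_1(1)$ is a closed-form nonlinear functional of $X_2(1/2)$ and $\int h$ (Lemma~\ref{pr3aa}). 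This is what converts the SDE quadrature problem into a one-dimensional nonlinear Gaussian integration problem $S^{\text{int}}_v$, to which Proposition~\ref{prop1} (the adaptive Bakhvalov averaging lemma) can be applied with the disjointly supported bumps $h_1,\dots,h_{17n}$ of Lemma~\ref{pr4}. Your sketch says nothing about how to compute or control $\EEE[X_1^{a,b}(1)]$, so the ``spread'' property you postulate for the family has no justification.

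Two further points in your sketch are either off or underpowered. First, your counting argument treats the $n$ query points as if they were fixed, but the algorithm is adaptive; the paper's Proposition~\ref{prop1} handles this by evaluating the query sequence on the constant function $b^*$ and restricting attention to indices whose support is missed by those queries, and additionally passes through an $L^1$-averaging step (Lemma~\ref{lemA3}) to go from deterministic to randomized cost. Second, you suggest taking $|\Theta_n|$ superpolynomial in $n$; the paper instead takes $m=17n$ (linear) and puts the superpolynomial slowdown entirely into the gap $\varepsilon=S(g_{i,+})-S(g_{i,-})$, which is engineered via the auxiliary function $v_n$ constructed in Lemma~\ref{pr7} to respect the growth envelope $1+u(|\cdot|)$ on $D^\alpha a$. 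That construction of $v_n$---the interplay between the growth constraint on $a$ and the achievable size of the gap---is the analytical heart of Theorem~\ref{mainThm} and is entirely absent from your proposal. So while the high-level outline is compatible with the paper's argument, the proposal has a genuine gap: it does not produce the SDE family, does not compute the marginal, and does not justify the calibration of the gap.
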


  Perhaps even more surprising
we obtain from Corollary~\ref{cor2} in Section~\ref{sub4-1} that for every such sequence of Monte Carlo methods and for every arbitrarily slow convergence speed there exists a strictly increasing and continuous function $u\colon [0,\infty) \to [0,\infty)$
and a sequence of infinitely often differentiable and bounded by one coefficients $a_n$, $b_n$ such that the  order one partial derivatives of $b_n$ are bounded by one as well, the order one partial derivatives of $a_n$ are dominated by the function $ 1 + u(|\cdot|)$ and the resulting sequence of mean absolute errors
for computing the expectation of the first component of the solution at the final time can not converge to zero faster than the given speed of convergence. This finding is formally stated in Theorem~\ref{ti2b}, which follows from Corollary~\ref{cor2} in Section~\ref{sub4-1}.

\begin{theorem}\label{ti2b}
Let $d=4, m=1$ and
let $(\Omega,\A,\PP)$ be a probability space. For every $n\in\N$ let
$\psi_{n,1}\colon \Omega\to\R^4$
and let
\[
\psi_{n,i}\colon (\R^4\times \R^4)^{\N_0\times \{1,\dots,i-1\}} \times \Omega \to \R^4,\quad i=2,\dots,n,
\]
as well as
\[
\varphi_n \colon  (\R^4\times \R^4)^{\N_0\times \{1,\dots,n\}}\times \Omega \to \R
\]
be measurable mappings. For all infinitely often differentiable functions $a\colon \R^4\to \R^4$ and $b\colon \R^4\to \R^{4}$ and for every $n\in\N$ define
random variables $Z_{n,1}^{a,b},\dots, Z_{n,n}^{a,b}\colon \Omega \to  (\R^4\times \R^4)^{\N_0}$ by $Z_{n,1}^{a,b}(\omega) = \bigl((D^\alpha a,D^\alpha b)(\psi_{n,1}(\omega))\bigr)_{\alpha\in \N_0^4}$
 and
\[
Z_{n,i}(\omega) = \bigl((D^\alpha a,D^\alpha b)(\psi_{n,i}
(Z_{n,1}(\omega),\dots,Z_{n,i-1}(\omega),\omega))\bigr)_{\alpha\in \N_0^4},
\quad i=2,\dots,n.
\]
Let $(\eps_n)_{n\in\N}$ be a sequence of positive reals with $\lim_{n\to \infty}\eps_n = 0$.

Then there exist $c\in (0,\infty)$ and a strictly increasing, continuous function $u\colon[0,\infty) \to [0,\infty)$ as well as sequences $(a_n)_{n\in\N}$ and $(b_n)_{n\in\N}$ of  infinitely often differentiable functions $a_n,b_n\colon \R^4\to \R^4$
with  $\|a_n\|_\infty\le 1$, $ \|b_n\|_\infty\le 1$ and
$\| D^\alpha a_n/(1+ u(|\cdot|))\|_\infty \le 1$,
$\|D^\alpha b_n\|_\infty \le 1$
for all $\alpha\in \N_0^4$ with $|\alpha|_1=1$, such that for every $n\in\N$,
\[
 \EE\bigl[|\EEE [X^{a_n,b_n}_1(1)] - \varphi_n(Z_{n,1}^{a_n,b_n},\dots, Z_{n,n}^{a_n,b_n}, \cdot) |\bigr] \ge c\cdot \eps_n.
\]
\end{theorem}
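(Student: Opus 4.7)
The plan is to derive Theorem~\ref{ti2b} from Corollary~\ref{cor2} in Section~\ref{sub4-1}, which supplies the hard lower bound for each fixed $n$. Relative to Theorem~\ref{ti2a}, the prescribed rate $(\varepsilon_n)$ replaces the superpolynomial gap and a tailored $u$ replaces the fixed dominator $1+\exp(|x|^3)$; the trade-off is that slower required decay permits milder coefficient pathology and hence a slower growing $u$. At this level the task is therefore to translate the $n$-indexed data $(\psi_{n,i},\varphi_n)$ into the framework of that corollary and to extract a \emph{single} growth function $u$ that works uniformly in $n$.

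For each $n$, I would invoke the construction underlying Corollary~\ref{cor2} to produce infinitely often differentiable coefficients $(a_n,b_n)$, both bounded by one, that agree outside a region $U_n=\{x\in\R^4:|x|\ge r_n\}$ with a benign reference pair $(a_{\mathrm{ref}},b_{\mathrm{ref}})$ having an explicit first-coordinate expectation, and that inside $U_n$ encode a smooth two-branch perturbation shifting $\EEE[X_1^{a_n,b_n}(1)]$ by at least $2c\varepsilon_n$ between the two branches. The algorithm sees only the finite adaptive sequence $Z_{n,1}^{a_n,b_n},\dots,Z_{n,n}^{a_n,b_n}$; on the event that no query point $\psi_{n,i}$ falls in $U_n$ this data, and hence the output $\varphi_n(Z_{n,1}^{a_n,b_n},\dots,Z_{n,n}^{a_n,b_n},\cdot)$, is identical for both branches. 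A hitting-time estimate for the reference SDE, a union bound over the $n$ adaptive queries, and dominance of the law of $\psi_{n,i}$ by a suitable reference measure together ensure that this event has probability close to one once $r_n$ is large enough. A Le~Cam two-point argument between the two branches then yields the lower bound $\ge c\varepsilon_n$ for one of them, which is declared to be $(a_n,b_n)$.

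The construction produces $\|D^\alpha a_n\|_\infty\le M_n<\infty$ for $|\alpha|_1=1$ with pathology localized in $U_n$; outside $U_n$ the derivatives agree with those of $a_{\mathrm{ref}}$ and are uniformly bounded. Choosing $r_n\to\infty$ fast enough so that $\{k:r_k\le r\}$ is finite for every $r\ge 0$, setting
\[
u_0(r)=\max\bigl(\{M_k:r_k\le r\}\cup\{\|D^\alpha a_{\mathrm{ref}}\|_\infty:|\alpha|_1=1\}\bigr)+r,
\]
and taking a continuous strictly increasing interpolant $u$ of $u_0$, a case-split on whether $|x|\ge r_n$ gives the required $|D^\alpha a_n(x)|\le 1+u(|x|)$; the bound $\|D^\alpha b_n\|_\infty\le 1$ is inherited directly, since $b_n$ is built one-Lipschitz uniformly in $n$. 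The main obstacle, which is absorbed into Corollary~\ref{cor2}, is the construction of the two-branch perturbation itself: it must be smooth, supported in $U_n$, shift the expectation by $\ge 2c\varepsilon_n$, keep derivatives under $M_n$, and remain invisible to any adaptive sampling whose query points stay in $\{|x|<r_n\}$ with high probability. The delicate interplay between $r_n$, $M_n$, and $\varepsilon_n$ is the heart of the difficulty; once that is handled inside the corollary, the deduction of Theorem~\ref{ti2b} above is essentially bookkeeping.
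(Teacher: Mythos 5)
Your proposal correctly identifies Corollary~\ref{cor2} as the source of the lower bound, but it misreads what that corollary delivers and therefore re-does work that is already finished. Corollary~\ref{cor2} already produces a \emph{single} strictly increasing continuous $u$ and a constant $c$, both independent of $n$, such that $e_n^{\text{ran}}(\ccF(\eq_u\times\{\pi_1\});\sSDE)\ge c\,\eps_n$ for every $n$. There is no need to ``extract a single growth function $u$ that works uniformly in $n$'' by interpolating $n$-dependent derivative bounds $M_n$ over exhaustion radii $r_n$; that uniformization is precisely what the corollary's proof (the recursion defining $u$ from the levels $b_n=(\ln(1/\eps_n)/c_2)^{1/2}$) has already accomplished. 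The actual deduction of Theorem~\ref{ti2b} is one step of unwinding definitions: for each $n$, the tuple $(\psi_{n,1},\dots,\psi_{n,n},\varphi_n)$ defines a generalized randomized algorithm with $\nu\equiv n$, hence $\cost\le n$; therefore its worst-case mean error over $\eq_u\times\{\pi_1\}$ is $\ge e_n^{\text{ran}}\ge c\,\eps_n$, and since the error is a supremum over $(a,b)\in\eq_u$, there is some $(a_n,b_n)\in\eq_u$ at which the mean absolute error exceeds $c\,\eps_n/2$. The boundedness and derivative-growth conditions on $a_n,b_n$ are exactly the membership conditions for $\eq_u\subset C^{\infty,0}\times C^{\infty,1}$, so nothing further needs to be verified.

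Beyond being unnecessary, your sketch of ``the construction underlying Corollary~\ref{cor2}'' does not match the paper's: the unfavourable pairs $(a^{h,v},b)$ are not perturbations of a reference SDE localized in $\{|x|\ge r_n\}$, and the indistinguishability argument is not a hitting-time/union-bound/Le Cam estimate. In the paper the drift depends on a family of bump functions $h_i$ supported on disjoint subintervals of $[0,1/2]$ in the fourth (deterministic-time) coordinate, and the lower bound is Bakhvalov's averaging principle (Lemma~\ref{lemA3}) combined with Proposition~\ref{prop1}; the growth of $u$ enters through the auxiliary function $v_n$, not through the support radius of a perturbation. If you wished to re-derive Corollary~\ref{cor2} you would need to follow that construction, but for the statement at hand you should simply cite the corollary and make the two-line specialization above.
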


In Theorems~\ref{ti1}--\ref{ti2b} the integrand $f$ is fixed to be a coordinate projection  and lower bounds
  are provided
for the worst case mean absolute error of a  Monte Carlo quadrature
rule on subclasses of equations \eqref{sde} with infinitely often
differentiable coefficients that are bounded by one. On the other
hand, one can fix a specific equation \eqref{sde} with infinitely
often differentiable and bounded coefficients $a$ and $b$  and study
the worst case  mean absolute error of a  Monte Carlo quadrature
rule with respect to a class of integrands $f$. In the latter
setting a negative result of the type stated in Theorems 2 and 3,
which holds for any sequence of Monte Carlo quadrature rules that
are based on finitely many evaluations of the integrand $f$, can of
course not be true. In fact, consider the direct
 simulation method
 $\Sh_n^\text{ds}$ based on $n$ repetitions of the solution $X^{a,b}(1)$ of the fixed equation \eqref{sde} at the final time, i.e.,
\[
\Sh_n^\text{ds}(a,b,f) = \frac{1}{n}\sum_{i=1}^n f(V^{a,b}_i),
\]
where $V^{a,b}_1,\dots,V^{a,b}_n$ are independent and identically distributed as $X^{a,b}(1)$.
Clearly, if $f$ is bounded by one  then
\[
\EE\bigl[|S(a,b,f) - \Sh_n^\text{ds}(a,b,f)|^2\bigr] \le \frac{1}{n}.
\]

However, if only deterministic quadrature rules are considered then we obtain again
negative statements in the spirit of Theorems~\ref{ti2a} and \ref{ti2b} even for the seemingly easy problem of computing the expected value $\EEE[f(W(1))]$
for a one-dimensional Brownian motion $W$ and infinitely often
differentiable integrands $f\colon \R\to \R$ that are bounded by
one. For instance, we can show that for any sequence of
deterministic quadrature rules that are based on evaluations of the
integrand $f$  and all its derivatives at finitely many points in
$\R$ and for every arbitrarily slow convergence speed there exists a
strictly increasing and continuous function $u\colon [0,\infty) \to
[0,\infty)$ and a sequence of infinitely often differentiable and
bounded by one integrands $f_n\colon\R\to\R$ such that the order one
partial derivatives of $f_n$ are dominated by the function $ 1 +
u(|\cdot|)$ and the resulting sequence of approximation errors for
computing the expectation $\EEE [f_n(W(1))]$ can not converge to
zero faster than the given speed of convergence. This finding, which
is formally stated in the following theorem, is a straightforward
consequence of Corollary~\ref{corr4} in Section~\ref{sub4-2}.
\begin{theorem}\label{ti3}
Assume that $W$ is a one-dimensional Brownian motion. For every
$n\in\N$ let $x_{n,1},\dots,x_{n,n}\in \R$ and let $\varphi_n\colon
\R^{\N_0\times \{1,\dots,n\}}\to \R$ be a measurable mapping. Let
$(\eps_n)_{n\in\N}$ be a sequence of positive reals with $\lim_{n\to
\infty}\eps_n = 0$. Then there exists $c\in (0,\infty)$, a strictly
increasing, continuous function $u\colon   [0,\infty)\to [0,
\infty)$ and a sequence of infinitely often differentiable functions
$f_n \colon \R\to \R$ with $\|f_n\|_\infty \le 1$ and
$\|f_n^{(1)}/(1+u(|\cdot|))\|_\infty \le 1$ such that for every
$n\in\N$,
\[
\bigl|\EEE[f_n(W(1))] - \varphi_n\bigl((f_n^{(k)}(x_{n,1}),\dots, f_n^{(k)}(x_{n,n}))_{k\in \N_0} \bigr)\bigr| \ge c\cdot \eps_n.
\]
\end{theorem}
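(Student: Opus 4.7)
The plan is a fooling argument. Given the fixed algorithm $(x_{n,i},\varphi_n)_{n,i}$ and the sequence $(\eps_n)$, let $v_n:=\varphi_n(0,0,\ldots)$ denote the algorithm's output on the all-zero input. For each $n$ I would construct a smooth $b_n$ whose full Taylor series vanishes at every query point $x_{n,1},\ldots,x_{n,n}$. Then $\pm b_n$ both feed $\varphi_n$ the same zero vector and both yield output $v_n$; since $|\EEE[b_n(W(1))]-v_n|+|-\EEE[b_n(W(1))]-v_n|\geq 2|\EEE[b_n(W(1))]|$, choosing the appropriate sign $s_n\in\{\pm 1\}$ makes $f_n:=s_nb_n$ satisfy $|\EEE[f_n(W(1))]-v_n|\geq|\EEE[b_n(W(1))]|$. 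The task therefore reduces to constructing $b_n$ in the admissible class with $|\EEE[b_n(W(1))]|\geq c\eps_n$.

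To produce $b_n$, I would use a smooth bump supported in a Gaussian-mass-rich interval $I_n=(\alpha_n,\beta_n)\subseteq\R$ disjoint from the $n$ query points. Since the query points partition $\R$ into $n+1$ open intervals whose Gaussian masses sum to $1$, pigeonhole supplies such an $I_n$ of Gaussian mass at least $1/(n+1)$. Taking $b_n$ smooth with $|b_n|\leq 1$, equal to $1$ on the central half of $I_n$, and smoothly transitioning to $0$ near the endpoints, the disjointness $\supp b_n\cap\{x_{n,i}\}=\emptyset$ automatically makes all derivatives of $b_n$ vanish at each $x_{n,i}$, and $|\EEE[b_n(W(1))]|$ is at least a constant fraction of the Gaussian mass of $I_n$.

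The envelope $u$ would be chosen only after the algorithm and $(\eps_n)$ are in hand. The transition width $\tau_n$ of $b_n$ gives $\|b_n^{(1)}\|_\infty\sim 1/\tau_n$, and the transitions sit at the endpoint scale $R_n:=\max(|\alpha_n|,|\beta_n|)$. Ordering the pairs $(R_n,1/\tau_n)$ across $n$ and interpolating piecewise-linearly through a sufficiently fast-growing sequence of values produces a strictly increasing continuous $u\colon[0,\infty)\to[0,\infty)$ with $1+u(R_n)\geq 1/\tau_n$ for all $n$, so that $\|b_n^{(1)}/(1+u(|\cdot|))\|_\infty\leq 1$ uniformly.

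The main obstacle is that the naive pigeonhole bound only yields $|\EEE[b_n(W(1))]|\gtrsim 1/n$, which satisfies $\geq c\eps_n$ only when $\eps_n=O(1/n)$. For slowly-decaying $\eps_n$ (e.g.\ $1/\log n$) one must refine the construction, as carried out in Corollary~\ref{corr4} of Section~\ref{sub4-2}: one exploits the freedom in $u$ by letting it grow very rapidly so that \emph{many} non-overlapping narrow bumps may be placed in individual gaps and combined additively, and/or one uses a one-parameter deformation $t\mapsto b_{n,t}$ of ghost functions together with a continuity/intermediate-value argument to find $t$ with $|\EEE[b_{n,t}(W(1))]-v_n|\geq c\eps_n$. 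The delicate step throughout is ensuring that \emph{one} strictly increasing continuous $u$ simultaneously accommodates the bumps for every $n$ when the algorithm's query points are adversarial across scales.
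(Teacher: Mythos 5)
Your sign-flip fooling idea and the overall shape of the envelope argument are right, but the central mechanism — placing a bump in a pigeonhole gap of Gaussian mass at least $1/(n+1)$ — has a flaw that you do not identify and that your ``refinement'' paragraph does not resolve. The location of the pigeonhole gap $I_n$ is dictated by the adversary, and it can be forced to stay at bounded distance from the origin while shrinking in width as $n\to\infty$. For instance, with $n$ equally spaced query points in $[-L_n,L_n]$ and $L_n$ growing slowly (say $L_n\approx 2\sqrt{\ln n}$), the two tails carry mass $o(1/n)$ so the maximal-mass gap lies near the origin and has width of order $\sqrt{\ln n}/n$; a bump of nonnegligible height supported there needs $|f_n'|\gtrsim n/\sqrt{\ln n}$ at $|x|=O(1)$, which would require $u(R)\geq n/\sqrt{\ln n}-1$ for a fixed $R$ and every $n$, impossible for any real-valued $u$. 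Your step ``order the pairs $(R_n,1/\tau_n)$ and interpolate'' implicitly assumes $R_n\to\infty$ when $1/\tau_n\to\infty$, and the adversary can defeat this. (Moreover, even granting the interpolation, the derivative constraint near the origin caps the bump height at roughly $(1+u(0))\cdot\text{width}$, so the contribution is $O(1/n^2)$ rather than $O(1/n)$; and, as you correctly note, neither rate can dominate an arbitrarily slowly decaying $\eps_n$. The ``many bumps'' patch you gesture at does not help because it does not change \emph{where} the bumps must sit.)

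The paper's construction inverts your logic: rather than hunting for a Gaussian-rich gap and hoping the envelope cooperates, it \emph{chooses} where the bumps go. It plants $2n$ disjoint thin bumps in the window $[u^{-1}(z_n),u^{-1}(z_n)+2]$ with $z_n=\max(n,u(0))$, i.e.\ precisely far enough out that $u(x)\geq n$ holds on their supports, so that steepness of order $n$ is admissible \emph{by construction} (Lemma~\ref{pr10}). The Gaussian mass picked up there is only about $\exp(-(u^{-1}(z_n))^2)$, which is tiny — but this is a feature, not a bug: $u$ is then chosen (Corollary~\ref{corr4}) so that $u^{-1}(n)\approx\sqrt{\ln(1/\eps_n)}$, making the mass exactly track the target rate $\eps_n$. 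This decoupling of location from rate, mediated by $u$, is the crucial point your proposal is missing. Two further ingredients you omit: the integrands are of the form $f_n=\sin\circ h$ with $|h'|\leq 1+u$, which keeps $\|f_n\|_\infty\leq 1$ automatically and, because the bumps have pairwise disjoint supports, makes $\sin\bigl(\sum_i h_i\bigr)=\sum_i\sin(h_i)$, i.e.\ preserves additivity; and the single-bump-plus-pigeonhole is replaced by a counting argument (Proposition~\ref{prop2}) in which $n$ query points can hit at most $n$ of the $2n$ bumps, so at least $n$ remain unobserved — this yields the lower bound without any reference to which gaps are Gaussian-rich and extends to adaptive node choices, which the non-adaptive statement of Theorem~\ref{ti3} does not even require.
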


The findings stated in Theorems~\ref{ti1}--~\ref{ti2b} are worst
case results for randomized quadrature rules with respect to a given
class of equations~\eqref{sde}. It remains an open question whether
these results can be strengthened in the sense that for every
sequence of Monte Carlo methods for quadrature of the first
component of the solution, which are based on finitely many
sequential evaluations of the coefficients and all their partial
derivatives, there exists a single equation with infinitely often
differentiable and bounded coefficients, which leads to the
prescribed slow convergence rate of the corresponding sequence of
mean absolute errors. Up to now, a positive answer to this question
is only known for the sequence of Euler Monte Carlo schemes,
see~\cite{hhj12} and \eqref{zz1}. Similarly, it is unclear, whether
Theorem~\ref{ti3} can be strengthened in the sense that for every
sequence of deterministic quadrature rules for quadrature with
respect to the one-dimensional standard normal distribution, which
are based on finitely many sequential evaluations of the integrand
and all its derivatives, there exists a single infinitely often
differentiable and bounded integrand leading to the prescribed slow
convergence rate of the corresponding sequence of absolute errors.
We conjecture that both questions can be answered to the positive
and we will address  these issues in future research.

We add that there is a number of results on worst case lower error bounds for quadrature of marginals of SDEs in the case of coefficients $a, b$ that satisfy a uniform global Lipschitz condition  and integrands $f$ with first order partial derivatives that satisfy a uniform polynomial growth condition, see~\cite{PWW00,Kw03,PR06,MGRY2015}.

We further add that recently in~\cite{JMGY15} equations~\eqref{sde} with infinitely often differentiable and bounded coefficients $a$, $b$ have been constructed that can not be approximated at the final time in the pathwise sense
with a polynomial rate
by any approximation method based on finitely many evaluations of the driving Brownian motion. In the present paper we use a construction, which is conceptually similar to the one from~\cite{JMGY15} but specifically tailored to the analysis of the quadrature problem.

We briefly describe the content of the paper. In Section~\ref{not}
we fix some notation with respect to the regularity of coefficients
and integrands. In Section~\ref{Sec3} we set up the framework for
studying worst case errors of randomized and deterministic
algorithms for the approximation of nonlinear functionals on
function spaces. In particular, we establish lower error bounds for
the corresponding minimal randomized and deterministic errors that
generalize classical results of Bakhvalov~\cite{Bak59} and
Novak~\cite{Nov88} for linear integration problems. In
Section~\ref{Sec4} we use the framework from Section~\ref{Sec3} to
study quadrature problems for SDEs. Section~\ref{sub4-1} is devoted
to  lower bounds for worst case errors with respect to the
coefficients, while Section~\ref{sub4-2} contains our results on
worst case errors with respect to the integrands. The proofs of the
main results, Theorems~\ref{mainThm} and~\ref{mainThm2}, are carried
out in Section~\ref{proof}.

\section{Notation}\label{not}
Let $k,\ell_1,\ell_2\in \N$. For a vector $x\in\R^k$ and a matrix $M\in \R^{\ell_1\times \ell_2}$ we use $|x|$ and $|M|$ to denote the maximum norm of $x$ and $M$, respectively. For a function $h\colon \R^k\to\R^{\ell_1\times \ell_2}$
we put $\|h\|_\infty = \sup_{x\in \R^k}|h(x)|$. By $C^\infty(\R^k;\R^{\ell_1\times \ell_2})$ we denote the set of all functions $h\colon \R^k\to\R^{\ell_1\times \ell_2}$ that are infinitely often differentiable and
for $h\in C^\infty(\R^k;\R^{\ell_1\times \ell_2})$ and a multiindex $\alpha=(\alpha_1,\dots,\alpha_k)\in \N_0^k$  we use
\[
D^\alpha h = \frac{\partial^{\alpha_1+\dots + \alpha_k}h}{\partial x_k^{\alpha_k}\dots \partial x_1^{\alpha_1}}\colon \R^k \to \R^{\ell_1\times \ell_2}
\]
to denote the corresponding partial derivative of $h$. For every $\nu\in\N_0$ we use
\[
C^{\infty,\nu}(\R^k;\R^{\ell_1\times \ell_2}) = \Bigl\{h\in C^\infty(\R^k;\R^{\ell_1\times \ell_2})\colon \max_{\alpha\in \N_0^k, \alpha_1+\dots + \alpha_k \le \nu }\|D^\alpha h\|_\infty \le 1\Bigr\}
\]
to denote all functions $h\in C^\infty(\R^k;\R^{\ell_1\times \ell_2})$ that are  bounded by one and have partial derivatives up to order $\nu$ that are bounded by one as well.

\section{Approximation of nonlinear functionals on function spaces and lower worst case error bounds}\label{Sec3}
Let $A$ and $B$ be nonempty sets, let $\ccF\subset B^A$ be a nonempty set of functions $g\colon A \to B$ and let
\[
S\colon \ccF \to \R.
\]
We study the approximation of $S(\ff)$ for $\ff\in\ccF$ by means of
a  deterministic or randomized algorithm that is based on finitely
many evaluations of the mapping $\ff$ at points in $A$. Our goal is
to provide lower bounds for the worst case mean error of any such
algorithm in terms of its worst case average number of function
evaluations.

A generalized randomized algorithm for this problem is specified by
a probability space $(\Omega,\mathcal{A},\PP)$ and a triple
\[
(\psi,\nu,\varphi),
\]
where
\begin{itemize}
\item $\psi=(\psi_k)_{k\ge 1}$ is a sequence of mappings
\[
\psi_k\colon B^{k-1}\times \Omega \to A,
\]
which are used to sequentially determine random evaluation nodes in $A$ for a given input $\ff\in\ccF$,
\item the mapping
\[
\nu\colon \ccF\times \Omega \to \N
\]
determines the random total number of evaluations of a given input $\ff\in \ccF$, and
\item $\varphi=(\varphi_k)_{k\ge 1}$ is a sequence of mappings
\[
\varphi_k\colon B^{k}\times \Omega \to \R,
\]
which are used to obtain for every input $\ff\in \ccF$ a random approximation to $S(\ff)$ based on the observed function values of $\ff$.
\end{itemize}
To be more precise, we
define for every $k\in\N$ a mapping
\[
N_k^\psi\colon \ccF \times \Omega \to B^{k}
\]
  by
\[
N_k^\psi = (y_1,\dots,y_k),
\]
where
\[
y_1(\ff,\omega) = \ff(\psi_1(\omega))
\]
and
\[\phantom{\ell=2,\dots,k.}
y_\ell(\ff,\omega) = \ff\bigl(\psi_\ell(y_1(\ff,\omega),\dots,y_{\ell-1}(\ff,\omega),\omega)\bigr), \quad \ell=2,\dots,k.
\]
For a  given $\omega\in\Omega$ and a given input $\ff\in\ccF$
the algorithm specified by $(\psi,\nu,\varphi)$ sequentially performs $\nu(\ff,\omega)$ evaluations of  $\ff$ at the points
\[
\psi_1(\omega),\, \psi_2(y_1(\ff,\omega), \omega),
\dots, \psi_{\nu(\ff,\omega)}(y_1(\ff,\omega),\dots,y_{\nu(\ff,\omega)-1}(\ff,\omega), \omega)\in A
\]
and finally applies the mapping $\varphi_{\nu(\ff,\omega)}(\cdot,\omega)\colon B^{\nu(\ff,\omega)}\to \R$ to the data $N^\psi_{\nu(\ff,\omega)}(\ff,\omega)$ to
obtain the real number
\[
\Sh_{\psi,\nu,\varphi}(\ff,\omega) = \varphi_{\nu(\ff,\omega)}\bigl(N^\psi_{\nu(\ff,\omega)}(\ff,\omega),\omega\bigr)
\]
as an approximation to $S(\ff)$. The induced mapping
\[
\Sh_{\psi,\nu,\varphi}\colon \ccF \times \Omega \to \R
\]
is called a generalized randomized algorithm if for every $\ff\in\ccF$ the mappings
\[
\Sh_{\psi,\nu,\varphi}(\ff,\cdot)\colon\Omega\to \R\mbox{ \ and \ }
 \nu(\ff,\cdot)\colon\Omega\to \N
\]
are random variables.

We use  $\Aran$ to denote the
class of all randomized algorithms. The error and the cost of $\Sh\in\Aran$ are defined in the worst case sense by
\[
e(\Sh) = \sup_{\ff\in \ccF} \E|S(\ff)- \Sh(\ff,\cdot)|
\]
and
\[
\cost(\Sh) = \inf_{\psi,\nu,\varphi}\biggl\{\sup_{\ff\in\ccF} \E\,\nu(\ff,\cdot)\colon\,\Sh = \Sh_{\psi,\nu,\varphi}\biggr\},
\]
respectively. Thus the definition of the  cost of $\Sh$ takes into account that  the representation $\Sh=\Sh_{\psi,\nu,\varphi}$ is not unique in general.

A generalized randomized algorithm $\Sh\in \Aran$
is called deterministic if the random variable $\Sh(\ff,\cdot)$ is constant for all $\ff\in \ccF$. In this case we have $\Sh = \Sh_{\psi,\nu,\varphi}$ with mappings
\begin{equation}\label{deter2}
\psi_k\colon B^{k-1}\to A, \,\, \nu\colon  \ccF\to \N,\,\, \varphi_k\colon B^{ k}\to \R,
\end{equation}
and it is easy to see that
\[
\cost(\Sh) = \inf\biggl\{\sup_{\ff\in\ccF} \nu(\ff)\colon\,\Sh = \Sh_{\psi,\nu,\varphi},(\psi,\nu,\varphi) \text{ satisfies \eqref{deter2}} \biggr\}.
\]
The class of all generalized deterministic algorithms is denoted by
$\Adet$.

Let $n\in\N$. The crucial quantities for our analysis are the $n$-th minimal errors
\[
\edet_n(\ccF; S)  = \inf\{e(\Sh)\colon \Sh\in \Adet,\, \cost(\Sh)\le n\}
\]
and
\[
\eran_n(\ccF; S)  = \inf\{e(\Sh)\colon \Sh\in \Aran,\, \cost(\Sh)\le n\},
\]
i.e., the smallest possible worst case error that can be achieved by generalized
deterministic algorithms based on at most $n$ function values of $\ff\in \ccF$ and the smallest possible worst case mean error that can be achieved by generalized  randomized  algorithms that use at most $n$ function values of $\ff\in \ccF$ on average, respectively. Clearly, $\edet_n(\ccF; S)\ge \eran_n(\ccF; S)$.

We present two types of lower bounds for the minimal errors $\edet_n(\ccF; S)$ and $\edet_n(\ccF; S)$, which generalize classical results of Bakhvalov and Novak for
the case of $S$ being a linear functional on a space  $\ccF$ of real-valued functions $g\colon A\to \R$,    see ~\cite{Bak59,Nov88}.
\begin{prop}\label{prop1}
Let $\varepsilon >0$, $m\in\N$, $b^*\in B$ and assume that there exist $2m$ functions
\[
\ff_{1,+},\ff_{1,-},\dots, \ff_{m,+},\ff_{m,-}\colon A\to B
\]
with the following properties.\\[-.3cm]
\begin{itemize}
\item[(i)] The sets
\[
\{\ff_{1,+}\neq b^*\}\cup\{\ff_{1,-}\neq b^*\},\dots,\{\ff_{m,+}\neq b^*\}\cup\{\ff_{m,-}\neq b^*\}
\]
 are pairwise disjoint,\\[-.3cm]
\item[(ii)] We have $\ff_{1,+},\ff_{1,-},\dots, \ff_{m,+},\ff_{m,-}\in \ccF$,\\[-.3cm]
\item[(iii)] We have $S(\ff_{i,+})-S(\ff_{i,-})\ge \varepsilon$ for $i=1,\dots,m$.\\[-.3cm]
\end{itemize}
Then, for every $n\in\N$,
\[
e_n^{\text{ran}}(\ccF; S) \ge \frac{m-16n}{8m}\,\varepsilon.
\]
\end{prop}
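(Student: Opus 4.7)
The plan is a Bakhvalov-type averaging argument combined with two Markov inequalities. Let $\mu$ be the uniform probability measure on the $2m$ functions $\ff_{1,+},\ldots,\ff_{m,-}\in\ccF$. Given $\Sh\in\Aran$ with $\cost(\Sh)\le n$ and any $\delta>0$, I would pick a representation $\Sh=\Sh_{\psi,\nu,\varphi}$ satisfying $\sup_{\ff\in\ccF}\E\nu(\ff,\cdot)\le n+\delta$; it then suffices to prove $e(\Sh)\ge (m-16(n+\delta))\varepsilon/(8m)$ and let $\delta\downarrow 0$. Since the supremum over $\ccF$ dominates the $\mu$-average and by Fubini,
\[
e(\Sh)\ge \int_\ccF\E|S(\ff)-\Sh(\ff,\cdot)|\,d\mu(\ff) \quad\text{and}\quad \E\int_\ccF\nu(\ff,\cdot)\,d\mu(\ff)\le n+\delta.
\]

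For each fixed $\omega$ the map $\Sh(\cdot,\omega)$ is a deterministic adaptive algorithm, and I would analyse it via its \emph{default path} $x_1(\omega)=\psi_1(\omega)$, $x_2(\omega)=\psi_2(b^*,\omega)$, $x_3(\omega)=\psi_3(b^*,b^*,\omega),\dots$, i.e.\ the query points that would arise if every observed response equalled $b^*$. By~(i) each $x_\ell(\omega)$ lies in at most one of the supports $S_i:=\{\ff_{i,+}\neq b^*\}\cup\{\ff_{i,-}\neq b^*\}$, and on an input $\ff=\ff_{i,\pm}$ every response observed at a point outside $S_i$ equals $b^*$ (including responses at points in other supports $S_j$, by the disjointness~(i)). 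An induction on the query index then shows that as long as the algorithm has not queried inside $S_i$, its run on $\ff_{i,+}$ and its run on $\ff_{i,-}$ both stay on the default path, terminate at the same step, and emit a common output $\Sh_0(\omega)$. Calling such a pair $i$ \emph{undetected}, hypothesis~(iii) gives $|S(\ff_{i,+})-\Sh_0(\omega)|+|S(\ff_{i,-})-\Sh_0(\omega)|\ge\varepsilon$, contributing at least $\varepsilon/(2m)$ to the $\mu$-averaged error at $\omega$.

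The constants $16$ and $8$ emerge from two Markov steps. First, the event $E=\{\omega:\int_\ccF\nu(\ff,\omega)\,d\mu(\ff)\le 2(n+\delta)\}$ satisfies $\PP(E)\ge 1/2$. On $E$, applying Markov to $\mu$ shows that at most $1/4$ of the $2m$ inputs, hence at most $m/2$ of them, use strictly more than $8(n+\delta)$ queries; therefore at least $m/2$ of the $m$ pairs are \emph{light}, meaning both members use at most $8(n+\delta)$ queries. For a light pair $i$ to be detected the default path must meet $S_i$ within its first $8(n+\delta)$ steps, and by disjointness of the $S_i$'s at most $8(n+\delta)$ indices satisfy this. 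Hence at least $m/2-8(n+\delta)$ pairs are simultaneously light and undetected on $E$, which yields
\[
e(\Sh)\ge \PP(E)\cdot\frac{(m/2-8(n+\delta))\varepsilon}{2m}\ge \frac{(m-16(n+\delta))\varepsilon}{8m};
\]
letting $\delta\downarrow 0$ gives the claimed bound, which is trivially satisfied whenever $m\le 16n$.

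The step requiring the most careful bookkeeping will be the induction in the second paragraph: one needs to verify that if no query on input $\ff_{i,\pm}$ lies in $S_i$, then the query points, the random termination index $\nu$ and the final output $\varphi$ really do coincide with those of the default-path trajectory. This depends crucially on~(i) (so that queries hitting $S_j$ with $j\neq i$ still return $b^*$ on $\ff_{i,\pm}$) and on the response-adaptive form of $\psi_\ell$, $\nu$, and $\varphi_k$. Once this reduction to the default path is in place, the remainder is just the two standard Markov estimates outlined above.
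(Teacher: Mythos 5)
Your argument is correct and follows essentially the same strategy as the paper's: a Bakhvalov averaging argument with one Markov inequality in $\omega$ (isolating a favourable event of probability at least $1/2$), a second Markov inequality in $\mu$ (isolating pairs with low query counts), and the default-path observation that undetected pairs share a common output. The paper factors the $\omega$-Markov step into a standalone reduction lemma, Lemma~\ref{lemA3}, giving $e_n^{\text{ran}}\ge\tfrac12 e_{2n}^{\text{det}}(\mu)$, and then proves $e_N^{\text{det}}(\mu)\ge\tfrac{m-8N}{4m}\varepsilon$ by the $\mu$-Markov argument; you fuse the two steps into a single inline computation, but the constants, the default-path bookkeeping, and the final bound coincide.
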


\begin{prop}\label{prop2}
Let $B$ be a linear space. Let $\varepsilon >0$, $m\in \N$, $b^*\in B$ and assume that there exist $2m$ functions
\[
\ff_{1,+},\ff_{1,-},\dots, \ff_{m,+},\ff_{m,-}\colon A\to B
\]
 with the following properties.\\[-.3cm]
\begin{itemize}
\item[(i)] The sets
\[
\{\ff_{1,+}\neq b^*\}\cup\{\ff_{1,-}\neq b^*\},\dots,\{\ff_{m,+}\neq b^*\}\cup\{\ff_{m,-}\neq b^*\}
\]
 are pairwise disjoint,\\[-.3cm]
\item[(ii)] We have $\ff_{1,+},\ff_{1,-},\dots, \ff_{m,+},\ff_{m,-}\in \ccF$ and for all $\delta_1,\dots,\delta_{m}\in\{+,-\}$ we have
\[
\sum_{i=1}^m \ff_{i,\delta_i}\in \ccF \quad \text{and} \quad S\Bigl(\sum_{i=1}^m \ff_{i,\delta_i}\Bigr) = \sum_{i=1}^m S(\ff_{i,\delta_i}),
\]
\item[(iii)] We have  $S(\ff_{i,+})-S(\ff_{i,-})\ge \varepsilon$ for $i=1,\dots,m$.\\[-.3cm]
\end{itemize}
Then, for every $n\in\N$,
\[
e_n^{\text{det}}(\ccF; S) \ge \frac{m-n}{2}\,\varepsilon
\]
and for every $n\leq m/4$,
\[
e_n^{\text{ran}}(\ccF; S) \ge \sqrt{\frac{m-4n}{128}} \,\varepsilon.
\]
\end{prop}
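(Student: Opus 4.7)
The plan is to establish both bounds via Bakhvalov-style fooling arguments, exploiting the linear structure in (ii). For every sign pattern $\delta=(\delta_1,\dots,\delta_m)\in\{+,-\}^m$ the function $\ff^\delta=\sum_{i=1}^m \ff_{i,\delta_i}$ lies in $\ccF$ and satisfies $S(\ff^\delta)=\sum_{i=1}^m S(\ff_{i,\delta_i})$ by (ii). Thanks to (i), at each point $a\in A$ at most one summand $\ff_{i,\delta_i}$ differs from $b^*$, so every query $a$ meets at most one of the combined supports $U_i=\{\ff_{i,+}\neq b^*\}\cup\{\ff_{i,-}\neq b^*\}$; if $a\notin \bigcup_i U_i$ the value $\ff^\delta(a)$ does not depend on $\delta$, and if $a\in U_i$ it is determined by $\delta_i$ alone.

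For the deterministic bound, I fix any $\Sh\in\Adet$ with $\cost(\Sh)\le n$ and run it on $\ff^+:=\ff^{(+,\dots,+)}$. Its adaptive trajectory queries at most $n$ points and thus visits at most $n$ of the sets $U_i$; let $I\subseteq\{1,\dots,m\}$ collect the indices of the $U_i$'s that are not visited, so $|I|\ge m-n$. Flipping $(\delta_i)_{i\in I}$ to $-$ produces a pattern $\delta^\star$ for which $\ff^{\delta^\star}$ agrees with $\ff^+$ on every query point, and hence the algorithm's trajectory and output on $\ff^{\delta^\star}$ coincide with those on $\ff^+$. By additivity and (iii),
\[
S(\ff^+)-S(\ff^{\delta^\star})=\sum_{i\in I}\bigl(S(\ff_{i,+})-S(\ff_{i,-})\bigr)\ge (m-n)\varepsilon,
\]
and the triangle inequality forces the larger of $|S(\ff^+)-\Sh(\ff^+)|$ and $|S(\ff^{\delta^\star})-\Sh(\ff^{\delta^\star})|$ to be at least $(m-n)\varepsilon/2$.

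For the randomized bound I apply a Bakhvalov/Yao averaging argument with $\delta$ drawn uniformly from $\{+,-\}^m$. Fixing the algorithm's random seed $\omega$ and identifying $\{+,-\}$ with $\{\pm 1\}$, the above observation shows that the transcript of the deterministic algorithm $\Sh(\cdot,\omega)$ on $\ff^\delta$ depends on $\delta$ only through the visited coordinates $(\delta_i)_{i\in I(\delta,\omega)}$, whose number is bounded by $\nu(\ff^\delta,\omega)$. Writing $S(\ff^\delta)=C+\sum_{i=1}^m\delta_i\Delta_i$ with $\Delta_i=(S(\ff_{i,+})-S(\ff_{i,-}))/2\ge\varepsilon/2$, the conditional error given $\omega$ and the transcript takes the form $|c+\sum_{j\notin I(\delta,\omega)}\delta_j\Delta_j|$ for a deterministic constant $c$. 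Conditionally on the transcript the unvisited $\delta_j$'s remain independent Rademacher, and since $\EE|c+X|\ge\EE|X|$ for any symmetric Rademacher sum $X$, Khintchine's inequality yields
\[
\EE\bigl[|S(\ff^\delta)-\Sh(\ff^\delta,\cdot)|\bigr]\ge\tfrac{\varepsilon}{2\sqrt 2}\,\EE\sqrt{m-\nu(\ff^\delta,\cdot)}.
\]
Since $\EE\nu(\ff^\delta,\cdot)\le n$ under the cost hypothesis, Markov's inequality gives $\PP(\nu(\ff^\delta,\cdot)>4n)\le 1/4$, and hence $\EE\sqrt{m-\nu(\ff^\delta,\cdot)}\ge\tfrac{3}{4}\sqrt{m-4n}$ whenever $n\le m/4$. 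Absorbing constants produces the claimed bound $\sqrt{(m-4n)/128}\,\varepsilon$.

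The main technical obstacle is the randomized part: one has to argue rigorously that for fixed $\omega$ the adaptive query-and-answer trajectory is measurable with respect to the $\sigma$-algebra generated by $(\delta_i)_{i\in I(\delta,\omega)}$, so that the posterior distribution of the unvisited $\delta_j$'s is genuinely uniform Rademacher. Once this conditional-independence structure is verified, the Khintchine lower bound on a Rademacher sum shifted by an arbitrary constant combines routinely with the Markov estimate on the expected cost to finish the proof.
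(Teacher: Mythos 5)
Your proof is correct and takes essentially the same route as the paper's: Bakhvalov/Yao averaging over the sign cube, Khintchine's inequality on the unrevealed Rademacher coordinates, and a Markov truncation of the query count, with the deterministic part being the paper's fooling-pair argument verbatim. The paper realizes the conditioning you describe as an explicit combinatorial fiber partition (first by the information map $N^\psi_{2n}$, then by the visible part $g_1$) inside a separate average-case reduction lemma (Lemma~\ref{lemA3}), which makes the conditional-uniformity point you rightly flag transparent, but the underlying argument and constants are the same up to bookkeeping.
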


For the proof of the lower bounds for the $n$-th minimal randomized
errors in Propositions \ref{prop1} and \ref{prop2}  we employ a
classical averaging principle of Bakhvalov, see \cite{Bak59}.
Consider a probability measure $\mu$ on the power set $\cP(\ccF)$ of
$\ccF$ with finite support. For a deterministic algorithm $\Sh\in
\Adet$ we define the average error and the average cost of $\Sh$
with respect to $\mu$ by
\[
e(\Sh,\mu) = \int_\ccF  |S(\ff)-\Sh(\ff)|\,\mu(d\ff)
\]
and
\[
\cost(\Sh,\mu) = \inf \biggl\{\int_\ccF \nu(\ff)\,\mu(d\ff) \colon \Sh=\Sh_{\psi,\nu,\varphi}, (\psi,\nu,\varphi) \text{ satisfies \eqref{deter2}}\biggr\}.
\]

The smallest possible average case error with respect to $\mu$ that can be achieved by any generalized deterministic algorithm based on at most $n$ function evaluations on average with respect to $\mu$ is then given by
\[
e_n^\text{det}(\mu) = \inf\{e(\Sh,\mu)\colon \Sh\in \Adet,\, \cost (\Sh,\mu) \le n\}.
\]

\begin{lemma}\label{lemA3}
For every  probability measure $\mu$ on  $\cP(\ccF)$ and every $n\in\N$ we have
\[
e_n^\text{ran}(\ccF; S)\ge \frac{1}{2} \, e_{2n}^\text{det}(\mu).
\]
\end{lemma}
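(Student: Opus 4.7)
The plan is to follow the classical averaging principle of Bakhvalov. Fix any $\Sh \in \Aran$ with $\cost(\Sh) \le n$; by the definition of $\cost(\Sh)$, for each $\eta > 0$ I may choose a representation $\Sh = \Sh_{\psi,\nu,\varphi}$ with $\sup_{\ff \in \ccF} \E\,\nu(\ff,\cdot) \le n + \eta$. For each fixed $\omega \in \Omega$, the triple $(\psi(\cdot,\omega),\nu(\cdot,\omega),\varphi(\cdot,\omega))$ consists of mappings of the form~\eqref{deter2}, so the section $\Sh_\omega\colon \ccF \to \R$ defined by $\Sh_\omega(\ff) = \Sh_{\psi,\nu,\varphi}(\ff,\omega)$ is a generalized deterministic algorithm with $\cost(\Sh_\omega,\mu) \le \int_\ccF \nu(\ff,\omega)\,\mu(d\ff)$.

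Next I would apply Fubini's theorem, which is permissible since $\mu$ has finite support, to obtain
\[
e(\Sh) \;\ge\; \int_\ccF \E|S(\ff) - \Sh(\ff,\cdot)|\,\mu(d\ff) \;=\; \E\bigl[e(\Sh_\omega,\mu)\bigr]
\]
and analogously $\E\bigl[\int_\ccF \nu(\ff,\omega)\,\mu(d\ff)\bigr] = \int_\ccF \E\,\nu(\ff,\cdot)\,\mu(d\ff) \le n + \eta$. Setting $\Omega_0 = \bigl\{\omega \colon \int_\ccF \nu(\ff,\omega)\,\mu(d\ff) \le 2n\bigr\}$, Markov's inequality then yields $\PP(\Omega_0) \ge 1 - (n+\eta)/(2n)$.

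For every $\omega \in \Omega_0$ the deterministic algorithm $\Sh_\omega$ has average cost at most $2n$ with respect to $\mu$, hence $e(\Sh_\omega,\mu) \ge \edet_{2n}(\mu)$. Combining the two estimates gives
\[
e(\Sh) \;\ge\; \E\bigl[e(\Sh_\omega,\mu)\,\mathbf{1}_{\Omega_0}\bigr] \;\ge\; \PP(\Omega_0)\cdot \edet_{2n}(\mu) \;\ge\; \Bigl(\tfrac12 - \tfrac{\eta}{2n}\Bigr)\,\edet_{2n}(\mu).
\]
Letting $\eta \downarrow 0$ and then infimizing over admissible $\Sh$ yields the claim. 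I expect the main nuisance to be the measurability bookkeeping needed to justify Fubini's theorem and to certify that the $\omega$-section $\Sh_\omega$ genuinely lies in $\Adet$ with the claimed cost representation; both points are essentially routine given the setup of Section~\ref{Sec3}.
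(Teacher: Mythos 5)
Your argument is correct and coincides with the paper's proof: the same Bakhvalov averaging principle, the same choice of an almost-optimal representation, the same event on which the $\mu$-average cost is at most $2n$, and the same Chebyshev/Markov bound (your $1-(n+\eta)/(2n)$ is exactly the paper's $1/2-\rho/(2n)$). The paper likewise passes implicitly through Fubini/Tonelli to interchange $\E$ and $\int_\ccF$ and then restricts to the good event, so the two proofs are essentially identical.
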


For convenience of the reader we provide a proof of Lemma \ref{lemA3}

\begin{proof}[Proof of Lemma \ref{lemA3}]
Let $\Sh\in \Aran$ with $\cost(\Sh)\le n$. Let $\rho>0$ and choose
$(\psi,\nu,\varphi)$ such that $\Sh=\Sh_{\psi,\nu,\varphi}$ and
$\sup_{g\in \ccF}\E [\nu(g,\cdot)] \le n+\rho$. Put
\[
\Omega_1 = \Bigl\{\omega\in \Omega\colon \int_{\ccF} \nu(g,\omega)\, d\mu(g) \le 2n\Bigr\}.
\]
Then $2n\, \PP(\Omega_1^c) \le \sup_{g\in \ccF}\E [\nu(g,\cdot)]\le n+\rho$, and therefore, $\PP(\Omega_1)\ge 1/2-\rho/(2n)$. For every $\omega\in \Omega_1$ we have $\Sh(\cdot, \omega)\in \Adet$ and $\cost(\Sh(\cdot, \omega),\mu)\le 2n$, which implies
\[
\int_\ccF |S(g)-\Sh(g,\omega)|\, \mu(dg) \ge e_{2n}^\text{det}(\mu).
\]
Hence
\begin{align*}
e(\Sh) & \ge \int_\ccF \E[|S(g)-\Sh(g,\cdot)|]\, \mu(dg)\\ & \ge \int_{\Omega_1}\int_\ccF |S(g)-\Sh(g,\omega)|\,\mu(dg)\,\PP(d\omega)\ge (1/2-\rho/(2n))\cdot e_{2n}^\text{det}(\mu)
\end{align*}
Letting $\rho$ tend to zero completes the proof.
\end{proof}

\begin{proof}[Proof of Proposition \ref{prop1}]
Let $\mu$ denote the uniform distribution on
\[
\ccFt = \{\ff_{1,+},\ff_{1,-},\dots, \ff_{m,+},\ff_{m,-}\}.
\]
 We show that
\begin{equation}\label{ee1}
e_{n}^\text{det}(\mu) \ge \frac{m-8n}{4m}\,\varepsilon,
\end{equation}
which jointly with Lemma \ref{lemA3}   yields the lower bound in Proposition \ref{prop1}.

In order to prove \eqref{ee1}, let $\Sh\in \Adet$ with $\cost (\Sh,\mu) \le n$. Let $\rho >0$ and choose $(\psi,\nu,\varphi)$ satisfying \eqref{deter2} such that $\Sh=\Sh_{\psi,\nu,\varphi}$ and $\int_\ccF \nu(\ff)\,
\mu(d\ff) \le n+\rho$. Put
\[
\ccFt_1 = \{\ff\in \ccFt\colon \nu(\ff) \le 4n\}
\]
and let
\[
I = \{i\in \{1,\dots,m\}\colon \ff_{i,+},\ff_{i,-}\in \ccFt_1 \}.
\]
Then $4n\, \mu(\ccFt_1^c) \le 
n +\rho $,
and therefore $\mu(\ccFt_1) \ge 3/4 -\rho/(4n)$. Since $|\ccFt_1| = \mu(\ccFt_1)\cdot 2m $, we conclude that
\[
|I| \ge (1/2 -\rho/(2n))\cdot m.
\]
Let
\[
K = \{\psi_1,\psi_2(b^*),\dots, \psi_{4n}(b^*,\dots,b^*)\}
\]
denote the set of the first $4n$  nodes in $A$ that are produced by the sequence $(\psi_k)_{k\in \N}$ for evaluating the constant function $x\mapsto b^*$ on $A$, and put
\[
J = \{i\in I\colon K\cap (\{\ff_{i,+}\neq b^*\}\cup\{\ff_{i,-}\neq b^*\}) = \emptyset\}.
\]
Clearly, $\Sh_{\psi,\nu,\varphi}(\ff_{i_1,\delta_1}) = \Sh_{\psi,\nu,\varphi}(\ff_{i_2,\delta_2})$ for all $i_1,i_2\in J$ and all $\delta_1,\delta_2\in\{-,+\}$, and, observing property (i), we conclude $|J| \ge (1/2 -\rho/(2n))m - 4n$. Thus, by property (iii),
\begin{align*}
\int_\ccF |S(\ff)-\Sh(\ff)|\,\mu(d\ff) & \ge \frac{1}{2m} \sum_{i\in J} (|S(\ff_{i,+})-\Sh(\ff_{i,+})| + |S(\ff_{i,-})-\Sh(\ff_{i,-})|)\\
& \ge \frac{1}{2m}\sum_{i\in J} |S(\ff_{i,+})-S(\ff_{i,-})|\\
& \ge  \frac{|J|}{2m} \,\varepsilon\ge  (1/4 - \rho/(4n) - 2n/m)\,\varepsilon.
\end{align*}
Letting $\rho$ tend to zero yields
\[
e(\Sh,\mu) \ge (1/4 -2n/m)\,\varepsilon,
\]
which  completes the proof.
\end{proof}

\begin{proof}[Proof of Proposition \ref{prop2}]
We first prove the lower bound for the $n$-th minimal error of deterministic methods. Let $\Sh\in \Adet$ with $\cost (\Sh) \le n$. Choose $(\psi,\nu,\varphi)$ satisfying \eqref{deter2} such that $\Sh=\Sh_{\psi,\nu,\varphi}$ and $\sup_{g\in \ccF} \nu(g)\, \le n$. Consider the function
\[
g = \sum_{i=1}^m g_{i,+} \in \ccF
\]
and let $K$ denote the set of at most $n$ nodes in $G$ that are used by $\Sh_{\psi,\nu,\varphi}$ for evaluating $g$. Put
\[
J = \{i\in \{1,\dots,m\}\colon K\cap (\{g_{i,+}\neq b^*\}\cup\{g_{i,-}\neq b^*\}) = \emptyset\}
\]
and let
\[
h = \sum_{i\not\in J}g_{i,+} + \sum_{i\in J}g_{i,-}.
\]
Clearly, $\Sh_{\psi,\nu,\varphi}(g) = \Sh_{\psi,\nu,\varphi}(h)$, and, observing property (i), $|J| \ge m-n$. Thus, by properties (ii) and (iii),
\begin{align*}
e(\Sh) & \ge \frac{1}{2}\bigl(|S(g)-\Sh(g)|+|S(h)-\Sh(h)|\bigr)\ge \frac{1}{2}|S(g)-S(h)|\\
& \ge \frac{1}{2}\sum_{i\in J}(S(g_{i,+})- S(g_{i,-})) \ge \frac{|J|}{2}\varepsilon \ge (m-n)\cdot \varepsilon/2,
\end{align*}
which completes the proof of the lower bound for the minimal deterministic error.

We turn to the proof of the lower bound for the minimal randomized error.
Let $\mu$ denote the uniform distribution on
\[
\ccFt = \Bigl\{\sum_{i=1}^m g_{i,\delta_i}\colon \delta_1,\dots,\delta_m\in \{+,-\}\Bigr\}.
\]
 We show that
\begin{equation}\label{ee1b}
e_{n}^\text{det}(\mu) \ge \sqrt{\frac{m-2n}{32}}\,\varepsilon,
\end{equation}
which jointly with Lemma \ref{lemA3}   yields the
   desired
lower bound in Proposition \ref{prop2}.

In order to prove \eqref{ee1b}, let $\Sh\in \Adet$ with $\cost (\Sh,\mu) \le n$. Let $\rho >0$ and choose $(\psi,\nu,\varphi)$ satisfying \eqref{deter2} such that $\Sh=\Sh_{\psi,\nu,\varphi}$ and $\int_\ccF \nu(f)\,\mu(df) \le n+\rho$. Put
\[
\ccFt_1 = \{g\in \ccFt\colon \nu(g) \le 2n\}.
\]
Then $2n\, \mu(\ccFt_1^c) 
\le n +\rho $
and therefore $\mu(\ccFt_1) \ge 1/2 -\rho/(2n)$. Hence
\begin{equation}\label{ee4a}
|\ccFt_1| = \mu(\ccFt_1)\cdot 2^m \ge (1- \rho/n)\cdot 2^{m-1}.
\end{equation}

Consider the function $N^\psi_{2n}\colon \ccF\to B^{2n}$ and put
\[
\cY = \{N^\psi_{2n}(g)\colon g\in \ccFt_1\}
\]
as well as
\[
\ccFt_{1,y} = \{g\in \ccFt_1\colon N^\psi_{2n}(g) =y\}
\]
for all $y\in \cY$. Note that
\[
\ccFt_1 = \bigcup_{y\in\cY} \ccFt_{1,y}.
\]

Fix $y\in\cY$, let $K_y$ denote the set of the nodes in $A$ that are used by $\Sh_{\psi,\nu,\varphi}$ for evaluating each of the functions $g\in \ccFt_{1,y}$ and put
\[
J_y = \{i\in\{1,\dots,m\}\colon K_y\cap (\{g_{i,+}\neq b^*\}\cup \{g_{i,-}\neq b^*\})=\emptyset\}.
\]
For $g=\sum_{i=1}^m g_{i,\delta_i}\in \ccFt_{1,y}$ let $g_1 = \sum_{i\not\in J_y}g_{i,\delta_i}$ and put
\[
\ccFt_{1,y}(g_1) = \Bigl\{g_1+\sum_{i\in J_y}g_{i,\tilde \delta_i}\colon (\tilde \delta_i)_{i\in J_y}\in\{-,+\}^{|J_y|}\Bigr\}.
\]
Let $\ccFt^y = \{g_1\colon g\in \ccFt_{1,y}\}$. Clearly,
$\ccFt_{1,y}(g_1) \cap \ccFt_{1,y}(h_1) =\emptyset$ for all
$g_1,h_1\in \ccFt^y$ with $g_1\neq h_1$ and
\[
\ccFt_{1,y} = \bigcup_{g_1\in \ccFt^y}\ccFt_{1,y}(g_1).
\]
We show that for every $h\in \ccFt_{1,y}$,
\begin{equation}\label{ee3a}
\sum_{g\in \ccFt_{1,y}(h_1)}|S(g)-\Sh(g)| \ge 2^{|J_y|-3/2}\,(m-2n )^{1/2}\, \varepsilon.
\end{equation}
Using \eqref{ee4a} and \eqref{ee3a} we may then conclude that
\begin{align*}
\int_\ccF |S(g)-\Sh(g)|\,\mu(dg) &\ge 2^{-m}\sum_{g\in \ccFt_1} |S(g)-\Sh(g)|= 2^{-m} \sum_{y\in \cY}\sum_{h_1\in \ccFt^y} \sum_{g\in \ccFt_{1,y}(h_1)} |S(g)-\Sh(g)|  \\
&  \ge 2^{-m} \sum_{y\in \cY}\sum_{h_1\in \ccFt^y} 2^{|J_y|-3/2}\,(m-2n )^{1/2}\, \varepsilon \\
& = 2^{-m-3/2} |\ccFt_1|\,(m-2n )^{1/2}\, \varepsilon \\
& \ge 2^{-5/2}(1-\rho/n )\,(m-2 n)^{1/2} \, \varepsilon.
\end{align*}
Letting $\rho$ tend to zero yields
\[
e(\Sh,\mu) \ge 2^{-5/2}\,(m-2n)^{1/2}\,\varepsilon,
\]
which in turn implies \eqref{ee1b}.

It remains to prove the estimate  \eqref{ee3a}.
For $\delta\in\{+,-\}$ we define
\[
\bar \delta = \begin{cases}
+, & \text{if }\delta = -,\\ -,& \text{if }\delta = +,
\end{cases}
\]
and for $g=h_1+\sum_{i\in J_y}g_{i,\delta_i}\in \ccFt_{1,y}(h_1)$ we put
\[
\bar g = h_1+\sum_{i\in J_y}g_{i,\bar \delta_i}.
\]
Clearly,  $\Sh_{\psi,\nu,\varphi}(g) = \Sh_{\psi,\nu,\varphi}(\bar
g)$ for all $g\in \ccFt_{1,y}(h_1)$. Thus, by property (ii), the
Khintchine inequality, see~\cite{Sz1976},
 and property (iii),
\begin{align*}
\sum_{g\in \ccFt_{1,y}(h_1)}|S(g)-\Sh(g)| & = \frac{1}{2} \sum_{g\in \ccFt_{1,y}(h_1)} (|S(g)-\Sh(g)|+|S(\bar g)-\Sh(\bar g)|) \\
& \ge \frac{1}{2} \sum_{g\in \ccFt_{1,y}(h_1)} |S(g)-S(\bar g)|\\
& = \frac{1}{2} \sum_{\delta\in \{-,+\}^{|J_y|}}\Bigl|\sum_{i\in J_y} (S(g_{i,\delta_i})- S(g_{i,\bar \delta_i}))\Bigr|\\
& = \frac{1}{2} \sum_{\delta\in \{-1,+1\}^{|J_y|}}\Bigl|\sum_{i\in J_y} \delta_i (S(g_{i,+})- S(g_{i,-}))\Bigr|\\
& \ge \frac{2^{|J_y|-1/2}}{2}  \Bigl(\sum_{i\in J_y}  (S(g_{i,+})- S(g_{i,-}))^2\Bigr)^{1/2}\\
& \ge 2^{|J_y|-3/2}\,|J_y|^{1/2}\,\varepsilon.
\end{align*}
Using $|J_y|\ge m-2n$ completes the proof of \eqref{ee3a} and hereby finishes the proof of Proposition~\ref{prop2}.
\end{proof}

\section{Lower worst case error bounds for quadrature  of SDEs}\label{Sec4}

We consider a class of equations \eqref{sde} specified by a class
\[
\eq\subset C^{\infty,0}(\R^d;\R^d)\times C^{\infty,1}(\R^d;\R^{d\times m})
\]
of coefficients $(a,b)$ and a class
\[
\cF \subset C^\infty(\R^d;\R)
 \]
of integrands $f$ satisfying a polynomial growth condition, and we study the problem of approximating
\[
\EEE [f(X^{a,b}(1))]
\]
for all $(a,b)\in\eq$ and all $f\in\cF$ by means of a randomized or
   a deterministic algorithm that may use function values of $a$, $b$, $f$ and all
partial derivatives $D^\alpha a$, $D^\alpha b$, $D^\alpha f$ at finitely many points in $\R^d$. Our goal ist to establish a lower bound for the smallest possible worst case mean error over the classes $\eq$ and $\cF$ that can be achieved by any such algorithm if, on average, at most $n$ evaluation nodes in $\R^d$ may be used.

We formalize this problem in terms of the framework specified in Section \ref{Sec3} as follows.
Put
\[
A=\R^d, \quad B=(\R^d\times \R^{d\times m}\times \R)^{\N_0^d},
\]
let $\ccF(\eq,\cF) \subset B^A$ be given by
\[
\ccF(\eq,\cF)= \{(D^\alpha a,D^\alpha b,D^\alpha f)_{\alpha\in \N_0^d}\colon (a,b)\in \eq,\, f\in \cF\}
\]
and define $\sSDE\colon \ccF(\eq,\cF) \to\R$ by
\[
\sSDE\bigl((D^\alpha a,D^\alpha b,D^\alpha f)_{\alpha\in \N_0^d}\bigr)=\EEE [f(X^{a,b}(1))].
\]
The corresponding classes of deterministic and randomized algorithms  for approximating $\EEE [f(X^{a,b}(1))]$ based on finitely many sequential evaluations of $(D^\alpha a,D^\alpha b,D^\alpha f)_{\alpha\in \N_0^d}$ are given by $\Adet$ and $\Aran$, respectively, and the resulting minimal errors are denoted by $e_n^{\text{det}}(\ccF(\eq\times\cF); \sSDE )$ for deterministic methods and $e_n^{\text{ran}}(\ccF(\eq\times\cF); \sSDE )$ for randomized methods. Note that $\Aran$ contains in particular any Monte Carlo method or multilevel Monte Carlo method that is based on a strong or weak It\^{o}-Taylor scheme of
    arbitrary
order.

\subsection{Worst case analysis with respect to coefficients}\label{sub4-1}
In this section we take $d=4$, $m=1$ and
we consider the following two types of classes of equations $\eq$.
For a function $u\colon [0,\infty) \to [0,\infty)$ we put
\[
\eq_u = \Bigl\{(a,b)\in C^{\infty,0}(\R^4;\R^4)\times
C^{\infty,1}(\R^4;\R^{4})\colon \max_{\alpha\in \N_0^4,
|\alpha|_1 = 1} \sup_{x\in\R^4}\frac{|D^\alpha a(x)|}{1 + u(|x|)}\le
1\Bigr\}.
\]
Thus, $u$ is used to impose a  growth condition on the first order partial derivatives of a drift coefficient $a$. Furthermore,
we consider the class of equations
\[
\eq_{\text{lin}} = \Bigl\{(a,b)\in C^{\infty,0}(\R^4;\R^4)\times C^{\infty,1}(\R^4;\R^{4})\colon\,  \max_{\alpha\in \N_0^4, |\alpha|_1 = 1} \sup_{x\in \R^4} \frac{|D^\alpha a(x)|}{1 + |x|} < \infty\Bigr\},
\]
where the drift coefficient $a$ is required to satisfy a linear growth condition.
Clearly,
\[
\eq_{\text{lin}} = \bigcup_{k=1}^\infty \eq_{u_k},
\]
where $u_k(x) = k\cdot (1+ x)$ for $x\in [0,\infty)$.

The class of integrands is given by
\[
\cF=\{\pi_1\},
\]
where
\[
\pi_1\colon \R^4\to\R,\quad (x_1,\dots,x_4)\mapsto x_1,
\]
is the projection on the first coordinate. We thus study the computation of $\EEE[X^{a,b}_1(1)]$ for all $(a,b)\in \eq_u$  or all $(a,b)\in \eq_\text{lin}$.

The following result provides lower bounds for the minimal errors
$e_n^{\text{ran}}(\ccF(\eq_u\times\{\pi_1\}); \sSDE )$ in case that
the function $u$ is continuous, strictly increasing and satisfies
the condition $\displaystyle{\liminf_{x\to\infty} u(x)/x>0}$. See
Section~\ref{proof1} for a proof.

\begin{theorem}\label{mainThm}
There exist $c_1,c_2\in (0,\infty)$ and $c_3\in [1,\infty)$ such that for all continuous and strictly
increasing functions  $u\colon [0,\infty) \to [0,\infty)$, for all
$\delta \in (0,\infty)$, for all $\xl\in (0,\infty)$ with $\inf_{x\ge \xl} u(x)/x \ge \delta$ and for all $n\in\N$,
\[
e_n^{\text{ran}}\bigl(\ccF(\eq_u\times\{\pi_1\}); \sSDE \bigr)\geq c_1\cdot
 \exp\bigl(- c_2 \cdot \bigl(u^{-1}(c_3 \cdot \delta^{-1} \cdot \max(1,u^2(x_\delta+1)) \cdot n^4)\bigr)^2\bigr).
\]
\end{theorem}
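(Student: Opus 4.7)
The plan is to apply Proposition \ref{prop1} to the function class $\ccF(\eq_u \times \{\pi_1\})$. Concretely, fix a base pair $(a_0, b_0) \in \eq_u$ and let $b^* = (D^\alpha a_0, D^\alpha b_0, D^\alpha \pi_1)_{\alpha \in \N_0^4}$. For suitably chosen $m$ and $\varepsilon$ (both depending on $n$, $u$, $\delta$, $x_\delta$) I want to exhibit $2m$ perturbed coefficient pairs $(a_{i,\pm}, b_{i,\pm}) \in \eq_u$ that agree with $(a_0,b_0)$ outside pairwise disjoint balls $B_i \subset \R^4$ and satisfy $\EEE[X_1^{a_{i,+},b_{i,+}}(1)] - \EEE[X_1^{a_{i,-},b_{i,-}}(1)] \ge \varepsilon$. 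Setting $g_{i,\pm} = (D^\alpha a_{i,\pm}, D^\alpha b_{i,\pm}, D^\alpha \pi_1)_{\alpha\in\N_0^4}$, hypotheses (i)--(iii) of Proposition \ref{prop1} are then met by construction, and for $m \ge 32n$ that proposition yields $e_n^{\text{ran}}(\ccF(\eq_u \times \{\pi_1\}); \sSDE) \ge \varepsilon/16$.

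To build the perturbations I would place $m$ centers $z_1,\dots,z_m$ on the sphere of radius $R$ in $\R^4$ with pairwise spacing at least $3r$, so that the balls $B_i = B(z_i,r)$ are disjoint and a packing count gives $m \sim (R/r)^3$. I pick smooth cutoffs $\chi_i$ supported in $B_i$ with $\|\chi_i\|_\infty \le 1$ and $\|D\chi_i\|_\infty \sim 1/r$, and set
\begin{equation*}
a_{i,\pm}(x) = a_0(x) \pm \lambda\, \chi_i(x)\,e_1, \qquad b_{i,\pm}(x) = b_0(x),
\end{equation*}
where $e_1 = (1,0,0,0)$. The membership $(a_{i,\pm}, b_{i,\pm}) \in \eq_u$ forces $\lambda \le 1 - \|a_0\|_\infty$ (from $\|a_{i,\pm}\|_\infty\le 1$) and $\lambda/r \lesssim u(R)$ (from $|Da_{i,\pm}| \le 1 + u(|x|)$ on $B_i$), so the largest admissible amplitude is $\lambda \sim \min(1, r\,u(R))$; since only the drift is modified the derivative bound on $b_{i,\pm}$ is inherited from $b_0$.

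The central analytic input is a lower bound on $\varepsilon$. Writing $X$ for the solution of the base equation and $X_\pm$ for the perturbed ones, a Girsanov--type or variation-of-parameters expansion yields
\begin{equation*}
\EEE[X_{+,1}(1)] - \EEE[X_{-,1}(1)] = 2\lambda\,\EEE\Bigl[\int_0^1 \chi_i(X(t))\,dt\Bigr] + \text{(controllable remainder)},
\end{equation*}
so $\varepsilon$ is comparable to $\lambda$ times the expected occupation time of $B_i$ by the base solution. This is the heart of the argument: since the noise is one-dimensional ($m=1$), the pair $(a_0, b_0)$ has to be engineered (along the lines of the pathwise hard instance of \cite{JMGY15}, recalibrated for the quadrature problem) so that the four spatial coordinates implement a combined Brownian/time-driven scan of the shell $\{|x|\sim R\}$, producing an occupation mass $\gtrsim r^4 \exp(-c R^2)$ on \emph{every} $B_i$ uniformly in $i$. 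The hypothesis $\inf_{x\ge x_\delta} u(x)/x \ge \delta$ enters exactly here: it provides a uniform super-linearity of $u$ on the relevant scale so that the perturbed drift remains admissible on the entire shell, which is what makes the constants in Theorem~\ref{mainThm} explicitly depend on $\delta$ and $u(x_\delta+1)$.

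Finally I optimize. Requiring $m\sim (R/r)^3 \ge 32n$ fixes $r\sim R\cdot n^{-1/3}$ up to constants, reducing $\lambda$, $\varepsilon$ and $m$ to functions of $R$; in the regime where the derivative constraint is binding one has $\lambda\sim r\,u(R)$ and therefore
\begin{equation*}
\varepsilon \gtrsim r^5\,u(R)\,\exp(-cR^2).
\end{equation*}
Choosing $R$ so that $u(R)\sim c_3 \delta^{-1}\max(1,u^2(x_\delta+1))\,n^4$, all polynomial prefactors in $n$ and in $R$ are absorbed against the exponential, yielding
\begin{equation*}
e_n^{\text{ran}}(\ccF(\eq_u \times \{\pi_1\}); \sSDE) \gtrsim \exp(-c_2 R^2) = \exp\bigl(-c_2\bigl(u^{-1}\bigl(c_3 \delta^{-1}\max(1,u^2(x_\delta+1))\,n^4\bigr)\bigr)^2\bigr),
\end{equation*}
which is the claim. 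The main obstacle is clearly the construction of $(a_0, b_0)$ together with the quantitative occupation-time estimate on every $B_i$ uniformly in $i$ while keeping every $(a_{i,\pm},b_{i,\pm})$ inside the class $\eq_u$; the packing count, the application of Proposition~\ref{prop1} and the final balancing of parameters are then routine.
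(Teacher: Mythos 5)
Your skeleton is superficially similar to the paper's (both ultimately invoke Proposition~\ref{prop1} with $2m$ input pairs that agree outside pairwise disjoint sets), but the construction you propose is genuinely different from the one in the paper and has an unresolved core gap.

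The paper does not perturb the drift of the first coordinate by bump functions localized on a sphere of radius $R$ in $\R^4$, and it never needs the process to physically occupy a shell at distance $R$ from the origin. Instead it builds a layered, explicitly solvable system: with the fixed diffusion $b=(0,\rho_1(x_4),0,0)$ and a drift of the form $a^{h,v}=(\rho_2(x_4)\rho_3(\tfrac{x_3}{1+x_3^2}v(x_2)),\,0,\,h(x_4),\,1)$, the fourth coordinate is $X_4(t)=t$ (deterministic time), the third integrates the perturbation $h$ deterministically to $X_3(1/2)=\int_0^{1/2}h$, and the second is an explicit Gaussian $X_2(1/2)\sim N(0,c_{\rho_1})$. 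Lemma~\ref{pr3aa} then gives a closed form for $\EEE[X_1^{a^{h,v},b}(1)]$ as a one-dimensional Gaussian integral. The perturbation therefore enters through $h$ in the third component (Lemma~\ref{pr4}: $17n$ disjoint 1-d bumps in $[0,1/2]$ of the time-like variable $x_4$), and the exponential smallness $\exp(-c R^2)$ is not an occupation-time estimate at all: it is the Gaussian tail $\exp(-x^2/(2c_{\rho_1}))$ paid because the auxiliary amplifying function $v$ (constrained by $v,v'\le 1+u(|\cdot|)$) can only be large where $|x|\gtrsim u^{-1}(\cdot)$. This is the whole point of introducing $v$ and reducing first to the auxiliary functional $S_v^{\text{int}}$ (Lemma~\ref{pr3a}), and it is exactly where the hypothesis $\inf_{x\ge x_\delta}u(x)/x\ge\delta$ is used (Lemmas~\ref{pr6}--\ref{pr7}, which design $v_n$ and control the change of variables between $\vl$ and $u^{-1}$) — not, as you suggest, to keep a localized drift perturbation admissible on a far shell.

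Because of this, the step you flag as ``the main obstacle'' is not a routine recalibration: it is the proof. Your proposal needs (a) an exact or controllable variation-of-parameters identity linking $\EEE[X_{+,1}(1)]-\EEE[X_{-,1}(1)]$ to an occupation time, with the feedback of the perturbed $X_1$ into $\chi_i$ handled; (b) a uniform lower bound on the 4-d occupation measure of $\{|x|\approx R\}$ of order $r^4\exp(-cR^2)$ for a rank-one noise ($m=1$) diffusion with coefficients bounded by one, uniformly over all $\sim (R/r)^3$ balls on the sphere; and (c) a parameter balance in which the polynomial prefactors $r^5 u(R)$ really disappear against $\exp(-cR^2)$. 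None of these is supplied, (b) in particular is not something one can just cite; with bounded coefficients and one Brownian driver, getting a uniformly positive smooth density over a whole sphere is a hypoellipticity question that would itself need a careful construction. The paper's layered design is precisely engineered to avoid all three issues: one coordinate is literal time, the ``scan'' is deterministic, the randomness that produces the $\exp(-R^2)$ factor is a scalar Gaussian, and the expectation is computed in closed form, so there is no remainder to control. You would need to supply a construction of comparable explicitness before your route could be considered a proof.
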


Due to Lemma 5 in \cite{JMGY15} we have
\[
\forall q, c\in (0,\infty)\colon\,
\lim_{n\to\infty}      n^q\cdot \exp\bigl(-c\cdot(u^{-1}(c\cdot
n^4))^2\bigr)=\infty
\]
if and only if
\begin{equation}\label{101}
\forall q\in (0,\infty)\colon\,\lim_{x\to\infty} u(x)\cdot \exp(-q\cdot x^2)=\infty.
\end{equation}
Clearly, \eqref{101} implies $\displaystyle{\liminf_{x\to\infty} u(x)/x>0}$.
As an immediate consequence of Theorem \ref{mainThm} we thus get a non-polynomial decay of the minimal errors $e_n^{\text{ran}}(\ccF(\eq_u\times \{\pi_1\}); \sSDE)$ if $u$ satisfies the exponential growth condition~\eqref{101}.
\begin{cor}\label{cor1}
Assume that $u\colon [0,\infty) \to [0,\infty)$ is
continuous, strictly increasing and satisfies  \eqref{101}. Then for
all $q>0$,
\[
\lim_{n\to\infty} n^q\cdot e_n^{\text{ran}}\bigl(\ccF(\eq_u\times\{\pi_1\}); \sSDE\bigr)=\infty.
\]
\end{cor}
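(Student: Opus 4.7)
The plan is to plug the hypothesis~\eqref{101} into Theorem~\ref{mainThm} and then invoke the equivalence from Lemma~5 of~\cite{JMGY15} that is already recorded just above the corollary. First, I would use the observation (noted in the text) that~\eqref{101} implies $\liminf_{x\to\infty} u(x)/x > 0$: pick any $\delta \in (0,\infty)$ together with an associated $x_\delta \in (0,\infty)$ such that $u(x)/x \ge \delta$ for all $x \ge x_\delta$. These are precisely the ingredients demanded by Theorem~\ref{mainThm}.

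With this choice, Theorem~\ref{mainThm} provides constants $c_1, c_2 \in (0,\infty)$ and $c_3 \in [1,\infty)$, independent of $n$, such that setting $C := c_3 \cdot \delta^{-1} \cdot \max(1, u^2(x_\delta+1)) \in (0,\infty)$ (a quantity depending on $u$ but not on $n$), one has
\[
e_n^{\text{ran}}\bigl(\ccF(\eq_u \times \{\pi_1\}); \sSDE\bigr) \ge c_1 \cdot \exp\bigl(-c_2 \cdot (u^{-1}(C \cdot n^4))^2\bigr)
\]
for every $n \in \N$. To bring this into the form covered by the cited equivalence, I would set $c := \max(c_2, C) \in (0,\infty)$. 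Since $u^{-1}$ is strictly increasing and $C \le c$, we get $u^{-1}(Cn^4) \le u^{-1}(cn^4)$; squaring and using $c_2 \le c$ yields
\[
c_2 \cdot (u^{-1}(Cn^4))^2 \le c \cdot (u^{-1}(cn^4))^2,
\]
hence $\exp(-c_2 (u^{-1}(Cn^4))^2) \ge \exp(-c (u^{-1}(cn^4))^2)$.

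Given any $q \in (0,\infty)$, multiplying the displayed lower bound by $n^q$ therefore gives
\[
n^q \cdot e_n^{\text{ran}}\bigl(\ccF(\eq_u \times \{\pi_1\}); \sSDE\bigr) \ge c_1 \cdot n^q \cdot \exp\bigl(-c (u^{-1}(c n^4))^2\bigr).
\]
By assumption~\eqref{101} and the equivalence in Lemma~5 of~\cite{JMGY15} recalled in the paragraph preceding the corollary, the right-hand side tends to $\infty$ as $n \to \infty$, which is exactly the claim.

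Essentially this is a bookkeeping argument: the substantive work is concentrated in Theorem~\ref{mainThm} and in the cited Lemma~5 of~\cite{JMGY15}. The only mild point of care, and perhaps the only place where something could go wrong, is reconciling the two different constants $c_2$ and $C$ appearing in the lower bound with the single constant $c$ demanded in the equivalence; this is handled by the monotonicity argument above, and I would not expect any further obstacle.
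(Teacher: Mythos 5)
Your proposal is correct and follows the route the paper intends: observe that~\eqref{101} forces $\liminf_{x\to\infty} u(x)/x > 0$, apply Theorem~\ref{mainThm}, and invoke the equivalence from Lemma~5 of~\cite{JMGY15} recalled just above the corollary. The only bookkeeping the paper leaves implicit is reconciling the constant $c_2$ outside $u^{-1}$ with the $n$-independent constant $C$ inside, which your choice $c := \max(c_2, C)$ together with the monotonicity of $u^{-1}$ handles cleanly.
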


The following result shows that the minimal errors
$e_n^{\text{ran}}(\ccF(\eq_u\times\{\pi_1\}); \sSDE)$ may decay
arbitrary slow.

\begin{cor}\label{cor2}
For every sequence  $(\eps_n)_{n\in\N}\subset (0, \infty)$ with
$\lim_{n\to\infty} \eps_n=0$ there exists  $c\in (0,\infty)$ and a
strictly increasing and continous function $u\colon [0,\infty) \to [0, \infty)$
such that for all $n\in\N$ we have
\[
e_n^{\text{ran}}\bigl(\ccF(\eq_u\times\{\pi_1\}); \sSDE\bigr)\geq c\cdot \eps_n.
\]
\end{cor}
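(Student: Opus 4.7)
The plan is to apply Theorem \ref{mainThm} with a piecewise linear function $u$ carefully tailored to the given sequence $(\eps_n)$. First I would reduce to the case of a strictly decreasing sequence by replacing $\eps_n$ with $\bar\eps_n:=\sup_{k\ge n}\eps_k+2^{-n}$; this dominates $\eps_n$, tends to zero, and is strictly decreasing, so a lower bound of the form $c\bar\eps_n$ on the minimal error already implies one of the form $c\eps_n$.

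Next, with $c_1,c_2,c_3$ the constants furnished by Theorem \ref{mainThm}, I would set $c:=c_1\exp(-9c_2)/\bar\eps_1$ and $M_n:=\sqrt{c_2^{-1}\log(c_1/(c\bar\eps_n))}$, which gives $M_1=3$ and makes $(M_n)_{n\ge 1}$ strictly increasing to $+\infty$. Then I define $u\colon[0,\infty)\to[0,\infty)$ as the identity on $[0,2]$ and, on $[2,\infty)$, as the continuous piecewise linear interpolation through $(2,2)$ and $(M_n,y_n)_{n\ge 1}$, where $y_n:=\max(4c_3n^4,\,M_n+1)$. Since $(y_n)$ is strictly increasing and $y_1>2$, this $u$ is continuous and strictly increasing on $[0,\infty)$, and since piecewise linear interpolation between points on or above the diagonal never drops below the diagonal (a one-line convex combination argument), $u(x)\ge x$ for all $x\ge 0$.

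Taking $\delta=1$ and $x_\delta=1$, one then has $\inf_{x\ge 1}u(x)/x\ge 1$ and $u(x_\delta+1)=u(2)=2$, so $\max(1,u^2(x_\delta+1))=4$, and Theorem \ref{mainThm} delivers
\[
e_n^{\text{ran}}\bigl(\ccF(\eq_u\times\{\pi_1\});\sSDE\bigr)\ge c_1\exp\bigl(-c_2\bigl(u^{-1}(4c_3n^4)\bigr)^2\bigr).
\]
Since $u(M_n)\ge 4c_3n^4$ by construction and $u$ is strictly increasing, $u^{-1}(4c_3n^4)\le M_n$, and the right-hand side is bounded below by $c_1\exp(-c_2 M_n^2)=c\bar\eps_n\ge c\eps_n$, which yields the conclusion.

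The main obstacle is the self-referential character of the bound in Theorem \ref{mainThm}: the constant $c_3\delta^{-1}\max(1,u^2(x_\delta+1))$ appearing inside $u^{-1}$ itself depends on $u$. I resolve this by pinning down $u$ to be the identity on $[0,2]$, which renders that constant the explicit number $4c_3$, after which $u$ can be engineered freely on $[2,\infty)$ to match the prescribed rate. A secondary technicality is that $M_n$ could a priori grow faster than $n^4$, in which case $y_n=4c_3n^4$ alone would violate $u(x)\ge x$ and spoil the lower-growth hypothesis needed to invoke the theorem; the fix is to enlarge the target to $y_n=\max(4c_3n^4,\,M_n+1)$, which costs nothing since only the one-sided bound $u(M_n)\ge 4c_3n^4$ is used in the final step.
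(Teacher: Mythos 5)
Your proof is correct and follows essentially the same approach as the paper: reduce to a strictly decreasing sequence, define threshold points via $\sqrt{c_2^{-1}\log(1/\eps_n)}$, build a continuous strictly increasing piecewise-linear $u$ that equals the identity near the origin (so that $u(2)=2$ pins down the constant in Theorem~\ref{mainThm}) and is steep enough that $u$ at the threshold dominates $4c_3n^4$, then invoke Theorem~\ref{mainThm} with $\delta=1$, $x_\delta=1$. Your choice of $c=c_1\exp(-9c_2)/\bar\eps_1$ to force $M_1=3$ is a nice streamlining that avoids the paper's separate treatment of small $n$ via an index $n_0$, but the argument is otherwise the same.
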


\begin{proof}
Without loss of generality we may assume that the sequence $( \eps_n )_{ n \in \N }$
is strictly decreasing.

Choose $c_1,c_2\in (0,\infty)$ and $c_3\in [1,\infty)$ according to Theorem~\ref{mainThm}.
Choose  $n_0 \in \N$
such that
for all
$ n \geq n_0$
we have
\begin{equation}\label{asss}
  \eps_n \le \exp (-4 c_2)
\end{equation}
and put
\[
  b_n = \Bigl(\frac{ 1 }{ c_2 }\cdot \ln \frac{1}{\eps_n }
  \Bigr)^{1/2}
\]
for $n\geq n_0$.
Note that
$
  ( b_n )_{ n\geq n_0 }
$
is strictly increasing and satisfies
$
  \lim_{  n \to \infty } b_n = \infty
$.

Next, define $u \colon [0, \infty) \to [0,\infty)$ recursively by
\begin{align*}
  u(x) =
  \begin{cases}
    x
    ,
  &
    \text{if }
    x \leq  b_{ n_0}
    ,
\\ u(b_n)+(x-b_n)\cdot \max\bigl(1,\tfrac{c_3\cdot 4(n+1)^4-u(b_n)}{b_{n+1}-b_n}\bigr)
  ,
  &
  \text{if }
  x \in ( b_n , b_{n+1} ] \text{ and } n \geq  n_0
  .
\end{cases}
\end{align*}
Then $ u $ is continuous, strictly increasing and satisfies
\[
  \inf_{ x >0} u( x )/x\geq 1.
\]
Moreover, for all $n\geq n_0+1$ we have
\begin{equation}\label{assu2}
  u(b_n)\geq 4c_3\cdot n^4.
\end{equation}

Note that $b_{n_0}\geq 2$ due to~\eqref{asss}. Hence $u(2)=2$.
Applying Theorem~\ref{mainThm} with $\delta=1$ and $x_\delta=1$ and observing~\eqref{assu2} as well as the fact that $u^{-1}$ is increasing we thus obtain for $n\ge n_0+1$ that
\begin{align*}
e_n^{\text{ran}}\bigl(\ccF(\eq_u\times\{\pi_1\}); \sSDE \bigr) & \ge
c_1\cdot \exp\bigl(-c_2\cdot (u^{-1}(c_3\cdot 4 \cdot  n^4))^2\bigr)\\
& \ge c_1\cdot \exp (-c_2\cdot b_n^2) = c_1 \cdot \eps_n.
\end{align*}
Finally, for all $ n \in \{ 1, 2, \dots, n_0 \}$,
\begin{align*}
e_{n}^{\text{ran}}\bigl(\ccF(\eq_u\times\{\pi_1\}); \sSDE \bigr) & \ge
e_{n_0+1}^{\text{ran}}\bigl(\ccF(\eq_u\times\{\pi_1\}); \sSDE \bigr)\\
&  \ge  c_1 \cdot \eps_{ n_0+1 } > \frac{c_1 \,\eps_{n_0+1}}{\eps_1}\cdot \eps_n,
\end{align*}
which completes the proof.
\end{proof}

As a further consequence of Theorem~\ref{mainThm} it turns out that
the class of equations $\eq_\text{lin}$ is too large to obtain
convergence of the corresponding minimal errors to zero at all.

\begin{cor}\label{cor3}
There exists $c\in (0,\infty)$ such that for all $n\in\N$ we have
\[
e_n^{\text{ran}}\bigl(\ccF(\eq_\text{lin}\times\{\pi_1\}); \sSDE\bigr)\geq c.
\]
\end{cor}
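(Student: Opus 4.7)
The plan is to exploit the structural decomposition $\eq_\text{lin}=\bigcup_{k\in\N}\eq_{u_k}$ with $u_k(x)=k(1+x)$, so that monotonicity of the minimal error under enlargement of the class gives
\[
e_n^{\text{ran}}(\ccF(\eq_\text{lin}\times\{\pi_1\});\sSDE)\ge e_n^{\text{ran}}(\ccF(\eq_{u_k}\times\{\pi_1\});\sSDE)
\]
for every $n,k\in\N$.  The strategy is, for each $n$, to apply Theorem~\ref{mainThm} to $\eq_{u_k}$ with $k=k(n)$ and parameters $(\delta,x_\delta)$ chosen to make the right-hand side at least some universal positive constant $c$.

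First I would verify that $u_k$ is admissible in Theorem~\ref{mainThm}: it is continuous and strictly increasing, with explicit inverse $u_k^{-1}(y)=y/k-1$ for $y\ge k$, and $u_k(x)/x=k+k/x$ is strictly decreasing with limit $k$, so $\inf_{x\ge x_\delta}u_k(x)/x=k$ for every $x_\delta>0$, giving $\delta=k$ as the maximal admissible value of $\delta$.  With this choice and $u_k(x_\delta+1)=k(2+x_\delta)$, the bound of Theorem~\ref{mainThm} reduces to
\[
c_1\exp\bigl(-c_2\bigl(c_3(2+x_\delta)^2 n^4-1\bigr)^2\bigr),
\]
which does not depend on $k$.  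The remaining task is to select $k(n)$, $x_\delta(n)>0$, and possibly $\delta(n)<k(n)$ so that the argument of $u_k^{-1}$ is bounded by some fixed $M$ independent of $n$, which would then yield the universal constant $c=c_1\exp(-c_2M^2)$.

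The main obstacle is precisely the $n^4$ factor inside $u_k^{-1}$: with the naive choice $\delta=k$ and $x_\delta\to 0$ the bound degrades like $\exp(-cn^8)$, so one must exploit the joint freedom in all three parameters $(k,\delta,x_\delta)$ to cancel this growth.  Carrying out the required balancing---essentially showing that $c_3\delta^{-1}u_k^2(x_\delta+1)n^4\le u_k(M)$ can be arranged uniformly in $n$ under the admissibility constraint $\delta\le k$ while preserving $\eq_{u_{k(n)}}\subset\eq_\text{lin}$---is the technical heart of the proof, and I anticipate it is the central difficulty to overcome.
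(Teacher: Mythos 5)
Your proposal correctly reduces the problem to applying Theorem~\ref{mainThm} on a subclass $\eq_u\subset\eq_{\text{lin}}$ and correctly computes that, for the family $u_k(x)=k(1+x)$, the resulting bound is $c_1\exp\bigl(-c_2(c_3(2+x_\delta)^2n^4-1)^2\bigr)$, independent of $k$. But you then treat this $k$-independence as an obstacle that "joint balancing" of $(k,\delta,x_\delta)$ might overcome. It cannot: for $u_k(x)=k(1+x)$ one has $u_k(x)/x=k+k/x\ge k$ for all $x>0$, so the largest admissible $\delta$ is $k$, and taking $\delta<k$ only enlarges $u_k^{-1}(c_3\delta^{-1}u_k^2(x_\delta+1)n^4)=c_3\delta^{-1}k(2+x_\delta)^2n^4-1$. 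The requirement you correctly isolate, namely $c_3\delta^{-1}u_k^2(x_\delta+1)n^4\le u_k(M)$ uniformly in $n$, reads $c_3(k/\delta)(2+x_\delta)^2n^4\le 1+M$ and is unsatisfiable for bounded $M$. The rigidity is structural: a globally linear $u_k$ has the same slope near $x_\delta+1$ as far out, so you cannot simultaneously keep $u_k(x_\delta+1)$ small and let $u_k$ absorb the factor $n^4$ over a bounded interval.

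The fix, which is what the paper does, is to abandon the family $u_k(x)=k(1+x)$ coming from the decomposition $\eq_{\text{lin}}=\bigcup_k\eq_{u_k}$ and instead design a genuinely $n$-dependent, piecewise-linear $u_n$ with two different slopes: a small slope $c_3$ on $[0,2]$ (so that $u_n(x_\delta+1)=u_n(2)=2c_3$ stays bounded and $\delta=c_3$, $x_\delta=1$ are admissible) and a steep slope of order $n^4$ on $(2,\infty)$ (so that $u_n^{-1}(4c_3^2n^4)=3$ exactly, killing the $n^4$ blow-up). Since $u_n(x)\le 2c_3+4c_3^2n^4x$, one still has $\eq_{u_n}\subset\eq_{\text{lin}}$. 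This two-slope construction is the missing idea; without it the balancing you anticipate as "the technical heart of the proof" does not go through.
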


\begin{proof}
Choose $c_1,c_2\in (0,\infty)$ and $c_3\in [1,\infty)$  according to Theorem~\ref{mainThm}.
Let  $n\in\N$. Define $u_n\colon [0,\infty)\to [0,\infty)$ by
\[
u_n(x) = \begin{cases}
c_3\cdot x, & \text{if }x\in [0,2],\\
2c_3  + (4c_3^2\cdot n^4 - 2c_3)\cdot (x-2), & \text{if }x\in (2,\infty).
\end{cases}
\]
Clearly, $u_n$ is continuous and strictly increasing, and for all $x\in [0,\infty)$ we have
\[
u_n(x) \le 2c_3 + 4c_3^2\cdot n^4 \cdot x.
\]
Hence $\eq_{u_n}\subset \eq_\text{lin}$, and therefore
\[
e_n^{\text{ran}}\bigl(\ccF(\eq_\text{lin}\times\{\pi_1\});
\sSDE\bigr) \geq
e_n^{\text{ran}}\bigl(\ccF(\eq_{u_n}\times\{\pi_1\}); \sSDE \bigr).
\]
Clearly, $u_n(x)\ge c_3\cdot x$ for all $x\in [0,\infty)$. Moreover, we have $u_n(2)=2c_3$ and
$u_n(3)=4c_3^2\cdot n^4$.
Applying Theorem \ref{mainThm} with $\delta = c_3$ and $x_{\delta}=1$ we obtain
\begin{align*}
e_n^{\text{ran}}\bigl(\ccF(\eq_{u_n}\times\{\pi_1\}); \sSDE \bigr)
&  \ge c_1\cdot \exp\bigl(-c_2\cdot (u_n^{-1}( \max(1,u_n^2(2))\cdot n^4))^2\bigr) \\
& = c_1\cdot \exp\bigl(-c_2\cdot (u_n^{-1}(4c_3^2\cdot n^4))^2\bigr)= c_1\cdot \exp (-9c_2),
\end{align*}
which completes the proof.
\end{proof}

\begin{rem}
Negative results of the type stated in
Corollaries \ref{cor1}--\ref{cor3} can not hold in the case of ordinary differential equations, i.e., the presence of a
stochastic part is essential to have the described slow convergence phenomena.

In fact, let $d\in\N$,
let $a\in C^{\infty,0}(\R^d;\R^d)$ and consider the ordinary differential equation
\begin{align*}
dX^{a}(t) & = a(X^{a}(t)) \, dt, \quad t\in [0,1],\\
X^{a}(0) & = 0.
\end{align*}
Let $n\in\N$ and let $(\Xh_n^{a}(\ell/n))_{\ell=0,\dots,n}$ denote
the corresponding Euler scheme with time step $1/n$, i.e.,
$\Xh_n^{a}(0) = 0$ and for $\ell = 1,\dots,n$,
\[
\Xh_n^{a}(\tfrac{\ell}{n}) = \Xh_n^{a}(\tfrac{\ell-1}{n}) + a(\Xh_n^{a}(\tfrac{\ell-1}{n}))\cdot \tfrac{1}{n}.
\]

Since $\|a\|_\infty\leq 1$ we have $|X^{a}(t)|\leq 1$ for all
$t\in[0,1]$ and $|\Xh_n^{a}(\tfrac{\ell}{n})|\leq 1$ for all $\ell =
0,\dots,n$. Using the latter two facts it is straightforward to see that for all
Lipschitz continuous functions $f\colon \R^d\to\R$ we have
\[
|f(X^{a}(1))-f(\Xh_n^{a}(1))|\leq c_1\cdot \|f\|_{\text{Lip}}\cdot
\exp\bigl(c_2\cdot \max_{\alpha\in \N_0^d, |\alpha|_1 =
1}\sup_{|x|\leq 1}|D^\alpha a(x)|\bigr)\cdot \frac{1}{n},
\]
where $c_1$ and $c_2$ are positive reals, which only depend on the
dimension $d$.
\end{rem}

\subsection{Worst case analysis with respect to integrands}\label{sub4-2}
In this section we take $d=m=1$ and we study the quadrature problem
for the trivial equation
\[
\begin{aligned}
dX(t)& = dW(t),\qquad t\in [0,1],\\ X(0)&  = 0.
\end{aligned}
\]
The class of equations is thus given by the singleton
\[
\eq = \{(0,1)\}.
\]

For a function $u\colon [0,\infty)\to [0,\infty)$ we consider the class of integrands given by
\[
 \cF_u=\bigl\{f\in C^{\infty,0}(\R; \R)\colon \|f'/(1+u(|\cdot|))\|_\infty \le 1\bigr\}.
\]
We thus study the computation of $\EE[f(W(1))]$ for all functions $f\colon\R\to\R$ that are infinitely often differentiable, bounded by one and have a derivative $f'$, which satisfies the growth condition specified by $u$.

The following result provides lower bounds for the minimal errors $e_n^{\text{det}}(\ccF(\{(0,1)\}\times\cF_u); \sSDE )$ of deterministic algorithms in case that the function $u$ is continuous, strictly increasing and unbounded.
See Section~\ref{proof2} for a proof.

\begin{theorem}\label{mainThm2}
There exists $c\in (0,\infty)$ such that for all continuous and strictly
increasing functions $u\colon [0,\infty)\to[0,\infty)$ with  $\displaystyle{ \lim_{x\to\infty} u(x) =\infty}$ and every $n\in\N$  we
have
\[
e_n^{\text{det}}(\ccF(\{(0,1)\}\times\cF_u); \sSDE)\geq c \cdot
\exp\bigl(-(u^{-1}( \max(n,u(0)))^2\bigr).
\]
\end{theorem}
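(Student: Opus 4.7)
The plan is to apply Proposition~\ref{prop2} with $A=\R$, $B=\R^{\N_0}$, and $b^*=0$, using $m=2n$ smooth bump functions as the fooling family. Since $S(f)=\int_\R f(x)\phi(x)\,dx$ (with $\phi$ the standard normal density) is linear in $f$, the additivity condition of Proposition~\ref{prop2}(ii) is automatic, so the task reduces to exhibiting $m=2n$ bumps with pairwise disjoint supports lying in $\cF_u$ with $S(g_{i,+})-S(g_{i,-})\geq \varepsilon$. Set $R=u^{-1}(\max(n,u(0)))$, so that $u(R)=\max(n,u(0))\geq n$. Fix a smooth function $\psi\colon\R\to[0,1]$ with $\supp(\psi)\subset[-1,1]$, $c_\psi:=\int\psi>0$, $C_\psi:=\|\psi'\|_\infty<\infty$. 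Define $r=C_\psi/(1+n)$, $x_i=R+(2i-1)r$, $g_{i,+}(x)=\psi((x-x_i)/r)$, and $g_{i,-}\equiv 0$. The crucial choice is $r=C_\psi/(1+n)$ rather than the ``natural'' $r=C_\psi/(1+u(R))$: since $u(R)\geq n$, this larger half-width still gives $|g'_{i,+}(x)|\leq C_\psi/r=1+n\leq 1+u(|x|)$ on the supports lying in $[R,R+2mr]$, while keeping each bump macroscopic even when $u(0)\gg n$.

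Conditions (i)--(iii) of Proposition~\ref{prop2} then follow routinely: the supports $[R+(2i-2)r,R+2ir]$ are pairwise disjoint; each bump lies in $\cF_u$, and disjointness of supports propagates the bounds $\|g\|_\infty\leq 1$ and $|g'(x)|\leq 1+u(|x|)$ to any sum $\sum_i g_{i,\delta_i}$; for (iii) one computes
$S(g_{i,+})=r\int_{-1}^{1}\psi(t)\phi(x_i+rt)\,dt\geq c_\psi r\,\phi(R+2mr)$
since $\phi$ is decreasing on $[0,\infty)$. Using $2mr=4nC_\psi/(1+n)\leq 4C_\psi$ and $nr\geq C_\psi/2$ for $n\geq 1$, Proposition~\ref{prop2} yields
\[
e_n^{\text{det}}(\ccF(\{(0,1)\}\times\cF_u);\sSDE)\geq \tfrac{1}{2}(m-n)\,\varepsilon \geq \tfrac{C_\psi c_\psi}{4}\,\phi(R+4C_\psi).
\]
A routine completion of squares gives $(R+4C_\psi)^2/2\leq R^2+16 C_\psi^2$ for all $R\geq 0$ (equality at $R=4C_\psi$), hence $\phi(R+4C_\psi)\geq (2\pi)^{-1/2}\exp(-16C_\psi^2)\,\exp(-R^2)$, and the theorem follows with $c=(C_\psi c_\psi/(4\sqrt{2\pi}))\exp(-16 C_\psi^2)$.

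The main obstacle is producing a constant $c$ uniform across both regimes $n\geq u(0)$ (where $R=u^{-1}(n)>0$) and $n<u(0)$ (where $R=0$). Tying the bump width to $u(R)$ would shrink the bumps by a factor $1/u(0)$ in the second regime, so each bump contribution would carry a factor $1/(1+u(0))$; the Proposition~\ref{prop2} amplification by $m-n=n$ would then only give a bound of order $n/(1+u(0))$, collapsing as $u(0)\to\infty$. The decoupling $r=C_\psi/(1+n)$ circumvents this: the derivative constraint is still met because $u(R)\geq n$, the $m=2n$ bumps fit in a bounded region of length $4C_\psi$ around $R$, and each contributes of order $\phi(R)/n$, so that the $m-n=n$ gain of Proposition~\ref{prop2} produces a uniform lower bound of order $\exp(-R^2)$.
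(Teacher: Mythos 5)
Your construction is essentially the paper's: you place $m=2n$ disjointly supported bumps of width of order $1/n$ near $R=u^{-1}(\max(n,u(0)))$ and feed them into Proposition~\ref{prop2} with $b^*=0$; this is exactly what Lemma~\ref{pr10} and Proposition~\ref{pr11} do. Your variant has two genuine simplifications that both work: you take $g_{i,-}=0$ instead of the paper's antisymmetric pair $\pm h_i$ (which only costs a factor of $2$ in $\varepsilon$), and you drop the detour through $\Hc_u$ and the composition $\sin\circ h$, placing bumps directly in $\cF_u$ and using linearity of $f\mapsto\int f\,\phi$ to get the additivity in Proposition~\ref{prop2}(ii) for free. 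The choice $r=C_\psi/(1+n)$ together with $u(R)\geq n$ is exactly the right decoupling; the estimates $2mr\le 4C_\psi$, $nr\ge C_\psi/2$, and the completion of the square are all correct.

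There is, however, one genuine gap. You invoke Proposition~\ref{prop2} with $A=\R$, $B=\R^{\N_0}$ and then conclude directly for $e_n^{\text{det}}(\ccF(\{(0,1)\}\times\cF_u);\sSDE)$. But the class to which $\sSDE$ is attached is $\ccF(\{(0,1)\}\times\cF_u)$, whose elements are maps $x\mapsto(0,\mathbf 1_{\{0\}}(k),f^{(k)}(x))_{k}$ taking values in $B=(\R\times\R\times\R)^{\N_0}$, not $\R^{\N_0}$. Pointwise sums of such tuples have $b$-component $m\cdot\mathbf 1_{\{0\}}(k)$ rather than $\mathbf 1_{\{0\}}(k)$, so the sums $\sum_i g_{i,\delta_i}$ required by Proposition~\ref{prop2}(ii) do \emph{not} lie in $\ccF(\{(0,1)\}\times\cF_u)$, and the proposition is not applicable to this class. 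To make your argument precise you must introduce an auxiliary problem on the class $\{(f^{(k)})_{k}\colon f\in\cF_u\}\subset(\R^{\N_0})^{\R}$ (where sums with disjoint supports do stay in the class, as you note) with the linear functional $(f^{(k)})_k\mapsto\int f\,\phi$, apply Proposition~\ref{prop2} there, and then prove the reduction $e_n^{\text{det}}(\ccF(\{(0,1)\}\times\cF_u);\sSDE)\ge e_n^{\text{det}}(\{(f^{(k)})_k\colon f\in\cF_u\};S^{\text{int}})$. This is precisely the content of the paper's Lemma~\ref{pr9} (there with the additional $\sin$-composition). The reduction is straightforward — evaluations of $(D^\alpha a,D^\alpha b)$ are constants and carry no information when $(a,b)=(0,1)$, so any $\sSDE$-algorithm of cost $\le n$ induces an $S^{\text{int}}$-algorithm of cost $\le n$ — but it is not automatic and must be stated; as written, your proposal skips it entirely.
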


By Lemma 5 in \cite{JMGY15},
\[
\lim_{n\to\infty} n^q\cdot \exp\bigl(-(u^{-1}( n))^2\bigr)=\infty
\]
for all $q>0$ if and only if \eqref{101} holds for all $q>0$. Thus,
Theorem~\ref{mainThm2} implies that, analogously to Corollary \ref{cor1}, we can not have a polynomial rate
of convergence of the
minimal errors $e_n^{\text{det}}(\{(0,1)\}\times\cF_u; \sSDE)$ if
$u$ satisfies the exponential growth condition~\eqref{101}.
\begin{cor}
Assume that $u\colon [0,\infty)\to [0,\infty)$ is
          continuous, strictly increasing and
 satisfies \eqref{101}. Then for all $q>0$,
\[
\lim_{n\to\infty} n^q\cdot e_n^{\text{det}}(\{(0,1)\}\times\cF_u;
\sSDE)=\infty.
\]
\end{cor}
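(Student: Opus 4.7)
The plan is to derive the corollary as a direct consequence of Theorem~\ref{mainThm2} and the cited Lemma~5 of \cite{JMGY15}. First I would note that the hypothesis \eqref{101} (for every $q>0$, $u(x)\cdot\exp(-q\cdot x^2)\to\infty$ as $x\to\infty$) in particular forces $\lim_{x\to\infty}u(x)=\infty$, so the assumptions of Theorem~\ref{mainThm2} are met and the pointwise lower bound
\[
e_n^{\text{det}}\bigl(\ccF(\{(0,1)\}\times\cF_u);\sSDE\bigr)\ \geq\ c\cdot\exp\bigl(-(u^{-1}(\max(n,u(0))))^2\bigr)
\]
is available for all $n\in\N$.

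Next I would dispose of the $u(0)$ term: since $u(0)$ is a fixed finite constant, there is $n_0\in\N$ such that $\max(n,u(0))=n$ for all $n\geq n_0$, and hence for such $n$
\[
e_n^{\text{det}}\bigl(\ccF(\{(0,1)\}\times\cF_u);\sSDE\bigr)\ \geq\ c\cdot\exp\bigl(-(u^{-1}(n))^2\bigr).
\]
Multiplying by $n^q$, it therefore suffices to show that $\lim_{n\to\infty} n^q\cdot\exp(-(u^{-1}(n))^2)=\infty$ for every $q>0$.

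This is precisely the content of Lemma~5 of~\cite{JMGY15}, which gives the equivalence between the divergence $\lim_{n\to\infty} n^q\exp(-(u^{-1}(n))^2)=\infty$ for all $q>0$ and the growth condition~\eqref{101}. Applying the ``if'' direction of that equivalence to our $u$ then yields the corollary. I do not foresee any real obstacle here, as the whole argument is essentially a chaining of Theorem~\ref{mainThm2} with the already-invoked asymptotic equivalence; the only minor bookkeeping points are absorbing the constant $u(0)$ into the eventual behavior of the maximum and possibly adjusting the constant $c$ on a finite initial segment of $n$ (which is irrelevant for the divergence of $n^q$ times the lower bound).
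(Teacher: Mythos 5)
Your proposal is correct and is essentially the paper's own argument: the paper also deduces the corollary by chaining the lower bound from Theorem~\ref{mainThm2} with Lemma~5 of~\cite{JMGY15}, observing (implicitly) that \eqref{101} forces $u$ to be unbounded and that the $u(0)$ in $\max(n,u(0))$ is irrelevant for large $n$. Your write-up just spells out these bookkeeping steps a bit more explicitly.
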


Similar to Corollary~\ref{cor2} we may also have an arbitrary slow decay
of the minimal errors
$e_n^{\text{det}}(\ccF(\{(0,1)\}\times \cF_u); \sSDE)$.

\begin{cor}\label{corr4}
For every sequence  $(\eps_n)_{n\in\N}\subset (0, \infty)$ with
$\lim_{n\to\infty} \eps_n=0$ there exists  $c\in (0,\infty)$ and a
strictly increasing and continous function $u\colon [0,\infty) \to [0, \infty)$
such that for all $n\in\N$ we have
\[
e_n^{\text{det}}(\{(0,1)\}\times\cF_u; \sSDE)\geq c\cdot \eps_n.
\]
\end{cor}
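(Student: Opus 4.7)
The plan is to follow the argument of Corollary~\ref{cor2} essentially verbatim, substituting Theorem~\ref{mainThm2} for Theorem~\ref{mainThm}. The key structural simplification is that, once we arrange $u(0)=0$, the bound of Theorem~\ref{mainThm2} reduces to
\[
e_n^{\text{det}}\bigl(\ccF(\{(0,1)\}\times\cF_u); \sSDE\bigr) \ge c\cdot \exp\bigl(-(u^{-1}(n))^2\bigr),
\]
so it suffices to construct $u$ for which $u^{-1}(n)$ is controlled in terms of $\eps_n$.

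I would first reduce to the case that $(\eps_n)_{n\in\N}$ is strictly decreasing: replacing $\eps_n$ by $\tilde\eps_n := \sup_{k\ge n}\eps_k + 2^{-n}$ yields a strictly decreasing sequence that still tends to zero and dominates $\eps_n$, so any lower bound of the form $e_n^{\text{det}}\ge c\cdot \tilde\eps_n$ implies the desired one for $\eps_n$. Let $c$ denote the constant from Theorem~\ref{mainThm2} and choose $n_0\in\N$ with $\tilde\eps_{n_0}<1$, so that $b_n := (\ln(1/\tilde\eps_n))^{1/2}$ is well defined for $n\ge n_0$, strictly increasing in $n$, and tends to infinity. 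Following the pattern of the proof of Corollary~\ref{cor2}, I would define $u\colon[0,\infty)\to[0,\infty)$ piecewise linearly by $u(x) = (n_0/b_{n_0})\,x$ for $x\in[0,b_{n_0}]$ and, for $n\ge n_0$ and $x\in(b_n,b_{n+1}]$,
\[
u(x) = u(b_n) + (x-b_n)\cdot \max\Bigl(1,\tfrac{(n+1)-u(b_n)}{b_{n+1}-b_n}\Bigr).
\]
A straightforward induction then gives $u(b_n)\ge n$ for every $n\ge n_0$, and by construction $u$ is continuous, strictly increasing, with $u(0)=0$ and $\lim_{x\to\infty} u(x)=\infty$, so the hypotheses of Theorem~\ref{mainThm2} are satisfied.

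Applying Theorem~\ref{mainThm2} then yields, using $u(0)=0$ and $u(b_n)\ge n$ for $n\ge n_0$,
\[
e_n^{\text{det}}\bigl(\ccF(\{(0,1)\}\times\cF_u); \sSDE\bigr) \ge c\cdot \exp\bigl(-(u^{-1}(n))^2\bigr)\ge c\cdot \exp(-b_n^2) = c\cdot \tilde\eps_n \ge c\cdot \eps_n
\]
for all $n\ge n_0$. For the finitely many remaining indices $n<n_0$, monotonicity of $e_n^{\text{det}}$ in $n$ combined with the uniform bound $\sup_k\eps_k<\infty$ (which holds since $\eps_n\to 0$) lets me absorb the small-$n$ case by replacing $c$ with the smaller but still positive constant $c\cdot \tilde\eps_{n_0}/\sup_k\eps_k$, exactly as in the final display of the proof of Corollary~\ref{cor2}. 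The only step requiring genuine care is the piecewise-linear construction of $u$ together with the inductive verification of $u(b_n)\ge n$; the reduction to a monotone $(\tilde\eps_n)$ and the bookkeeping for $n<n_0$ are routine.
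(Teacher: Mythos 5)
Your proof is correct and follows essentially the same route as the paper---a piecewise-linear $u$ with $u(b_n)\ge n$ for $b_n=(\ln(1/\eps_n))^{1/2}$, followed by Theorem~\ref{mainThm2}---and in fact it handles the initial segment (where $\eps_n$ may exceed $1$, so that $b_n$ is undefined) more carefully than the paper's own proof, which silently assumes $\eps_1<1$. One small correction: $\tilde\eps_{n_0}/\sup_k\eps_k$ need not be $\le 1$, so you should take the final constant to be $c\cdot\min\bigl(1,\tilde\eps_{n_0}/\sup_k\eps_k\bigr)$ in order to preserve the bound for $n\ge n_0$ as well.
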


\begin{proof}
Without loss of generality we may assume that the sequence $( \eps_n )_{ n \in \N }$
is strictly decreasing.

Choose $c\in (0,\infty)$ according to Theorem~\ref{mainThm2} and
put
\[
  b_n = \Bigl( \ln \frac{1}{\eps_n }
  \Bigr)^{1/2}
\]
for $n\in \N$.
Note that
$
  ( b_n )_{ n\geq n_0 }
$
is strictly increasing and satisfies
$
  \lim_{  n \to \infty } b_n = \infty
$.

Next, define $u \colon [0, \infty) \to [0,\infty)$ recursively by
\begin{align*}
  u(x) =
  \begin{cases}
    1+x
    ,
  &
    \text{if }
    x \leq  b_1
    ,
\\ u(b_n)+(x-b_n)\cdot \max\bigl(1,\tfrac{n+1-u(b_n)}{b_{n+1}-b_n}\bigr)
  ,
  &
  \text{if }
  x \in ( b_n , b_{n+1} ] \text{ and } n \geq  1
  .
\end{cases}
\end{align*}
Then
$ u $ is continuous, strictly increasing and for all $n\geq 1$ we have
\begin{equation}\label{assu2x}
  u(b_n)\geq  n.
\end{equation}

Note that $u(0)=1$. Applying Theorem~\ref{mainThm2} and
observing~\eqref{assu2x} as well as the fact that $u^{-1}$ is
increasing we thus obtain for $n\ge 1$ that
\begin{align*}
e_n^{\text{det}}\bigl(\ccF(\{(0,1)\}\times \cF_u); \sSDE \bigr)  \ge
c\cdot \exp\bigl(-(u^{-1}( n))^2\bigr)\ge c\cdot \exp (- b_n^2) = c \cdot \eps_n,
\end{align*}
which finishes the proof.
\end{proof}

\section{Proofs}\label{proof}

We prove Theorem~\ref{mainThm} in Section~\ref{proof1} and Theorem~\ref{mainThm2} in
Section~\ref{proof2}.

\subsection{Proof of Theorem \ref{mainThm}}\label{proof1}
Throughout the following let
$u\colon [0, \infty)\to [0,\infty)$ be strictly increasing, continuous
and satisfy the condition $\liminf_{x\to\infty} u(x)/x>0$.

In order to construct unfavourable equations~\eqref{sde}
in the class $\eq_u$ we employ a particular construction of functions in $C^{\infty,0}(\R;\R)$ and $C^{\infty,1}(\R;\R)$, which is established in the following two lemmas.

\begin{lemma}\label{pr1}
Let $\tau,\tau_1,\tau_2,v,v_1,v_2\in\R$ with  $v\in (0,\infty)$, $\tau_1<\tau_2$, $v_1<v_2$, and consider the functions
\[
 \eta_{\tau, v}\colon \R\to(0, v],\quad x\mapsto
\begin{cases}
v\cdot\bigl(1-\exp(\tfrac{1}{x-\tau})\bigr),& \text{if }x<\tau,\\
v,& \text{if }x \geq \tau
\end{cases}
\]
and
\[
\thet_{\tau_1, \tau_2, v_1, v_2}\colon \R\to[v_1, v_2],\quad x\mapsto
\begin{cases}
v_1, & \text{if }x\leq \tau_1,\\
v_1+\frac{v_2-v_1}{1+\exp\bigl(\frac{\tau_2-\tau_1}{x-\tau_1}-\frac{\tau_2-\tau_1}{\tau_2-x}\bigr)},& \text{if }x\in(\tau_1, \tau_2),\\
v_2,& \text{if }x \geq \tau_2.
\end{cases}
\]
Then\\[-.4cm]
\begin{itemize}
\item[(i)] $\eta_{\tau, v}\in C^\infty(\R;(0,v])$,\, $\thet_{\tau_1, \tau_2, v_1, v_2}\in C^\infty(\R;[v_1,v_2])$,\\[-.3cm]
\item[(ii)] $\forall k\in\N\colon\, [\tau,\infty)\subset \{\eta_{\tau,v}^{(k)} =0\}$, $\R\setminus (\tau_1,\tau_2)\subset\{\theta_{\tau_1,\tau_2,v_1,v_2}^{(k)} =0\}$,\\[-.3cm]
\item[(iii)] $\eta_{\tau, v}$ is strictly increasing on $(-\infty,\tau]$, \, $\thet_{\tau_1, \tau_2, v_1, v_2}$ is strictly increasing on $[\tau_1,\tau_2]$,\\[-.3cm]
 \item[(iv)] $\|\eta'_{\tau, v}\|_\infty \le 4\exp(-2)\cdot v$,\,
 $\|\thet'_{\tau_1, \tau_2, v_1, v_2}\|_\infty \le 4\cdot \tfrac{v_2-v_1}{\tau_2-\tau_1}$.
\end{itemize}
\end{lemma}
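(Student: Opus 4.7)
My plan is to establish the four properties in order, noting that $\eta_{\tau,v}$ and $\thet_{\tau_1,\tau_2,v_1,v_2}$ are variants of classical smooth transition functions, and all four claims reduce to (a) the standard fact that $t \mapsto \exp(1/t)$ on $(-\infty,0)$ extends to a $C^\infty$ function on $(-\infty,0]$ with all derivatives at $0$ equal to $0$, plus (b) direct differentiation and a calculus optimization.

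For property (i), the smoothness on $\R \setminus \{\tau\}$ (resp.\ $\R \setminus \{\tau_1,\tau_2\}$) is immediate from the explicit formulas. To handle the junction points I would use the classical fact that $\phi(t) = \exp(1/t)$ for $t<0$, extended by $\phi(0)=0$, lies in $C^\infty((-\infty,0])$ with $\phi^{(k)}(0)=0$ for all $k$; this follows from an induction showing $\phi^{(k)}(t) = P_k(1/t)\exp(1/t)$ for a polynomial $P_k$, combined with $\exp(1/t)$ decaying faster than any polynomial in $1/|t|$ as $t \to 0^-$. For $\eta_{\tau,v}$, writing $\eta_{\tau,v}(x) - v = -v\,\phi(x-\tau)$ for $x<\tau$ and $0$ for $x\ge\tau$ gives $\eta_{\tau,v}\in C^\infty(\R)$. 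For $\thet_{\tau_1,\tau_2,v_1,v_2}$ I would introduce $g(x) = (\tau_2-\tau_1)/(x-\tau_1) - (\tau_2-\tau_1)/(\tau_2-x)$ on $(\tau_1,\tau_2)$ and rewrite the transition formula as $v_1+(v_2-v_1)\,e^{-g(x)}/(1+e^{-g(x)})$ near $\tau_1$ (where $g \to +\infty$) and as $v_2 - (v_2-v_1)\,e^{g(x)}/(1+e^{g(x)})$ near $\tau_2$ (where $g \to -\infty$). Near $\tau_1$, the factor $e^{-g(x)}$ equals $\exp(-(\tau_2-\tau_1)/(x-\tau_1))\cdot \exp((\tau_2-\tau_1)/(\tau_2-x))$: the first factor extends by $0$ to $x\le\tau_1$ with all derivatives vanishing there by the classical fact applied on the opposite side, the second factor is smooth, and composition with the smooth map $u\mapsto u/(1+u)$ preserves vanishing of all derivatives. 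The analogous argument at $\tau_2$ completes the proof of (i).

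Property (ii) is then essentially a corollary: on the constancy regions all derivatives are trivially zero, and at the junction points the one-sided derivatives from the transition side also vanish by the smoothness just established. Property (iii) is by direct computation: on $(-\infty,\tau)$ one has $\eta_{\tau,v}'(x) = v\,e^{1/(x-\tau)}/(x-\tau)^2 > 0$, and on $(\tau_1,\tau_2)$ one obtains $g'(x) = -(\tau_2-\tau_1)\bigl(1/(x-\tau_1)^2 + 1/(\tau_2-x)^2\bigr) < 0$, so $\thet'(x) = -(v_2-v_1)\,g'(x)\,e^{g(x)}/(1+e^{g(x)})^2 > 0$.

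The main technical step is property (iv). For $\eta_{\tau,v}$, I substitute $s = \tau - x \in (0,\infty)$ to write $\eta_{\tau,v}'(x) = v\cdot s^{-2} e^{-1/s}$; the derivative of $\log(s^{-2} e^{-1/s}) = -2\log s - 1/s$ vanishes uniquely at $s=1/2$, yielding the global maximum $4\exp(-2)\cdot v$. For $\thet_{\tau_1,\tau_2,v_1,v_2}$ the substitution $y = (x-\tau_1)/L$ with $L=\tau_2-\tau_1$ reduces the claim to showing
\[
\Bigl(\frac{1}{y^2} + \frac{1}{(1-y)^2}\Bigr) \cdot \frac{e^{G(y)}}{(1+e^{G(y)})^2} \le 4, \qquad y\in(0,1),
\]
where $G(y) = 1/y - 1/(1-y)$. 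By the symmetry $y\leftrightarrow 1-y$ (which flips the sign of $G$ and leaves the quotient invariant) it suffices to treat $y\in(0,1/2]$, where $G\ge 0$. I would split into two regimes: when $G$ is small (say $G\le 1$, corresponding to $y$ near $1/2$), use $e^G/(1+e^G)^2 \le 1/4$ together with the easy bound on $1/y^2 + 1/(1-y)^2$ on the relevant interval; when $G\ge 1$, use $e^G/(1+e^G)^2 \le e^{-G}$ and optimize the resulting expression $(1/y^2+1/(1-y)^2)\,e^{1/(1-y)-1/y}$, whose dominant factor $y^{-2}e^{-1/y}$ achieves global maximum $4e^{-2}$ at $y=1/2$ and decreases for $y<1/2$. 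Each regime yields a bound comfortably below $4$. I expect this last optimization to be the main obstacle, since it requires combining the polynomial growth of $1/y^2 + 1/(1-y)^2$ with the super-exponential decay of $e^G/(1+e^G)^2$ near $y=0$; the first three properties are essentially bookkeeping around the standard smooth bump construction.
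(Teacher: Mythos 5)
Your treatment of properties (i)--(iii) matches what the paper does (it just declares them ``well-known'' or ``obvious''), and your optimization for the bound $\|\eta'_{\tau,v}\|_\infty\le 4e^{-2}v$ is the same calculation the paper performs. The interesting divergence is in the $\theta$ part of property (iv). You propose a case split on the size of $G(y)=1/y-1/(1-y)$: use $e^G/(1+e^G)^2\le 1/4$ near $y=1/2$ and $e^G/(1+e^G)^2\le e^{-G}$ far from it, then optimize each regime separately. The paper avoids any case split by splitting the \emph{sum} rather than the domain: it bounds $\bigl(\tfrac{1}{y^2}+\tfrac{1}{(1-y)^2}\bigr)\tfrac{e^G}{(1+e^G)^2}$ term-by-term via $\tfrac{e^G}{(1+e^G)^2}\le\tfrac{1}{1+e^G}$ on the $1/y^2$ piece and $\tfrac{e^G}{(1+e^G)^2}\le\tfrac{e^G}{1+e^G}$ on the $1/(1-y)^2$ piece, then multiplies numerator and denominator by $e^{-1/y}$ to obtain
\[
\frac{\tfrac{1}{y^2}e^{-1/y}+\tfrac{1}{(1-y)^2}e^{-1/(1-y)}}{e^{-1/y}+e^{-1/(1-y)}},
\]
which is $\le 4$ using only $\max_{t>0}t^2e^{-t}=4e^{-2}$ and $\min_{y\in(0,1)}\bigl(e^{-1/y}+e^{-1/(1-y)}\bigr)=2e^{-2}$. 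This produces the exact constant $4$ with two one-line facts and no regime analysis, whereas your approach requires locating the crossover point $y_0=(3-\sqrt 5)/2$ and checking two separate numerical estimates.

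One concrete error in your sketch of the $G\ge 1$ case: you claim $y^{-2}e^{-1/y}$ ``achieves global maximum $4e^{-2}$ at $y=1/2$ and decreases for $y<1/2$.'' These two statements are mutually inconsistent, and the second is wrong: $\frac{d}{dy}\bigl(-2\log y-1/y\bigr)=(1-2y)/y^2>0$ for $y<1/2$, so the function is \emph{increasing} there. This does not break your strategy --- on the interval $(0,y_0]$ the correct conclusion is that the factor is bounded by its value at $y_0$, which is comfortably below $4e^{-2}$ --- but as written the argument does not parse. You also leave the two closing numerical estimates unverified; I carried them through and they do hold (both regimes land around $2.4$ and $3.7$, below $4$), but you should be aware that the $G\ge 1$ estimate is reasonably tight, so the choice of cutoff $G=1$ is not arbitrary.
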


\begin{proof}
Property (i) is well-known and Properties (ii) and (iii) are obvious. In order to prove Property (iv) we first note that
\begin{equation}\label{e234}
\max_{y>0} (y^2\cdot \exp(-y)) = 4 \cdot \exp(-2),\quad \min_{y\in (0,1)} (\exp(-\tfrac{1}{y}) + \exp(-\tfrac{1}{1-y})) = 2\cdot \exp(-2).
\end{equation}
For $x\in (-\infty,\tau)$ we have
\[
\eta'_{\tau, v}(x)=\frac{v}{(x-\tau)^2}\cdot
\exp\bigl(\tfrac{1}{x-\tau}\bigr),
\]
which jointly with the first equality in \eqref{e234} yields the first inequality in (iv). Next, define
\[
\theta\colon (0,1)\to\R,\quad x\mapsto \frac{1}{1+\exp(\tfrac{1}{y}-\tfrac{1}{1-y})}.
\]
Then for all $x\in (\tau_1, \tau_2)$ we have
\[
\thet_{\tau_1, \tau_2, v_1, v_2} (x)=v_1+(v_2-v_1)\cdot \thet(\tfrac{x-\tau_1}{\tau_2-\tau_1}),
\]
and therefore
\begin{equation}\label{pr00}
\thet'_{\tau_1, \tau_2, v_1, v_2} (x)=\frac{v_2-v_1}{\tau_2-\tau_1}\cdot \thet'(\tfrac{x-\tau_1}{\tau_2-\tau_1}).
\end{equation}
For all $y\in(0,1)$ we have
\begin{align*}
 \thet'(y) & =
\Bigl(\frac{1}{y^2}+\frac{1}{(1-y)^2}\Bigr)\cdot
\frac{1}{\bigl(1+\exp(\tfrac{1}{y}-\tfrac{1}{1-y})\bigr)^2}\cdot \exp\bigl(\tfrac{1}{y}-\tfrac{1}{1-y}\bigr)\\
& \le \frac{1}{y^2} \cdot \frac{1}{1+\exp(\tfrac{1}{y}-\tfrac{1}{1-y})} + \frac{1}{(1-y)^2}\cdot
\frac{\exp\bigl(\tfrac{1}{y}-\tfrac{1}{1-y}\bigr)}{1+\exp(\tfrac{1}{y}-\tfrac{1}{1-y})}\\
& = \Bigl( \frac{1}{y^2} \cdot \exp\bigl(-\tfrac{1}{y}\bigr) + \frac{1}{(1-y)^2}\exp\bigl(-\tfrac{1}{1-y}\bigr)\Bigr)\cdot \frac{1}{\exp(-\tfrac{1}{y}) + \exp(-\tfrac{1}{1-y})},
\end{align*}
which jointly with  \eqref{e234} and \eqref{pr00} yields the second inequality in (iv).
\end{proof}

\begin{lemma}\label{pr2}
Consider the functions
\[
\rho_1 = 1/8- \thet_{0,1/2,0,1/8},\,\,\,\,\,\rho_2 = \thet_{1/2,1,0,1/8}
\]
as well as the function
\[
\rho_3\colon \R\to \R,\quad x \mapsto x\cdot \exp(-x^2),
\]
and put
\[
c_{\rho_1} = \int_0^{\tfrac{1}{2}} (\rho_1(x))^2\, dx,\,\, c_{\rho_2} = \int_{\tfrac{1}{2}}^1 \rho_2(x)\, dx.
\]
Then
\begin{itemize}
\item[(i)] $\rho_1,\rho_2,\rho_3\in C^{\infty,1}(\R;\R)$,\\[-.4cm]
\item[(ii)] $ \{\rho_1=0\}= [1/2,\infty) $,\, $ \{\rho_2=0\}=(-\infty,1/2]$,\\[-.4cm]
\item[(iii)] $\forall x\in[0,\infty)\colon 0\le \rho_3(x)= -\rho_3(-x)$,\\[-.4cm]
\item[(iv)] $\forall x\in[1/2,1]\colon \rho_3(x) \ge \rho_3(1)/2 >0$,\\[-.4cm]
\item[(v)] $c_{\rho_1}\in [1/1024,1/128]$,\,\, $c_{\rho_2}\in [1/64,\infty)$.
\end{itemize}
\end{lemma}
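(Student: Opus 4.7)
The plan is to reduce everything to Lemma~\ref{pr1} and a short calculus check. Parts (i) and (ii) for $\rho_1$ and $\rho_2$ follow directly from plugging the parameter values $(\tau_1,\tau_2,v_1,v_2) = (0,1/2,0,1/8)$ or $(1/2,1,0,1/8)$ into Lemma~\ref{pr1}: smoothness and the zero-set descriptions are immediate from parts (i)--(iii) of that lemma, and part (iv) yields $\|\rho_j'\|_\infty \le 4\cdot(1/8)/(1/2) = 1$. Combined with $\|\rho_j\|_\infty \le 1/8$, this gives $\rho_j\in C^{\infty,1}(\R;\R)$.

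For $\rho_3(x) = xe^{-x^2}$, the derivative is $\rho_3'(x) = (1-2x^2)e^{-x^2}$, whose sup-norm is attained at $x=0$ and equals $1$; likewise $\|\rho_3\|_\infty = (1/\sqrt{2})e^{-1/2} < 1$, proving (i). Oddness and nonnegativity on $[0,\infty)$ in (iii) are obvious. For (iv), the sign pattern of $\rho_3'$ shows that $\rho_3$ increases on $[1/2,1/\sqrt{2}]$ and decreases on $[1/\sqrt{2},1]$, so its minimum on $[1/2,1]$ equals $\min(\rho_3(1/2),\rho_3(1)) = \min((1/2)e^{-1/4}, e^{-1}) = e^{-1} = \rho_3(1)$, which trivially dominates $\rho_3(1)/2$.

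The main work is (v). The upper bound $c_{\rho_1} \le (1/2)(1/8)^2 = 1/128$ is immediate from $0 \le \rho_1 \le 1/8$ on $[0,1/2]$. For the lower bound my plan is to exploit a symmetry of the sigmoid template: writing $a(x) = (1/2)/x - (1/2)/(1/2-x)$, one checks $a(1/2-x) = -a(x)$, so the algebraic identity $(1+e^a)^{-1} + (1+e^{-a})^{-1} = 1$ yields
\[
\theta_{0,1/2,0,1/8}(x) + \theta_{0,1/2,0,1/8}(1/2 - x) = 1/8.
\]
Setting $x = 1/4$ gives $\theta_{0,1/2,0,1/8}(1/4) = 1/16$, hence $\rho_1(1/4) = 1/16$. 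Monotonicity from Lemma~\ref{pr1}(iii) then forces $\rho_1 \ge 1/16$ throughout $[0,1/4]$, and therefore $c_{\rho_1} \ge (1/4)(1/16)^2 = 1/1024$. The bound $c_{\rho_2} \ge 1/64$ is completely analogous: the same symmetry applied to $\theta_{1/2,1,0,1/8}$ gives $\rho_2(3/4) = 1/16$, then monotonicity gives $\rho_2 \ge 1/16$ on $[3/4,1]$, whence $c_{\rho_2} \ge (1/4)(1/16) = 1/64$.

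The only conceptually nontrivial point in the whole argument is spotting and verifying the sigmoid symmetry identity above; once that is in hand every step reduces to a one-line calculation or an appeal to Lemma~\ref{pr1}. I do not anticipate any genuine obstacle.
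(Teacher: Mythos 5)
Your proof is correct and follows essentially the same route as the paper's: reduce $\rho_1,\rho_2$ to Lemma~\ref{pr1}, a direct calculus check for $\rho_3$, and for (v) use the monotonicity of $\rho_1,\rho_2$ to bound the integrals from below on the subintervals $[0,1/4]$ and $[3/4,1]$ via the midpoint value $1/16$. Two small remarks: your explicit derivation of $\theta_{\tau_1,\tau_2,v_1,v_2}$ at the midpoint via the symmetry $a(\tfrac12-x)=-a(x)$ is a useful elaboration of a value the paper uses without comment, whereas your claim that $\|\rho_3'\|_\infty=1$ is ``attained at $x=0$'' is stated without verification (one must still rule out the interior critical points $x=\pm\sqrt{3/2}$, where $|\rho_3'|=2e^{-3/2}<1$); the paper instead proves $|1-2x^2|\le e^{x^2}$ for all $x$ directly. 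Neither issue affects correctness.
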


\begin{proof}
Property (ii) is an immediate consequence of the definition of $\rho_1$ and $\rho_2$, see Lemma \ref{pr1}, and Property (iii) is obvious.

By Lemma \ref{pr1}(i) we have $\rho_1,\rho_2\in C^{\infty,0}(\R;\R)$.
    By Lemma \ref{pr1}(iii) we have $\|\rho_i'\|_\infty \le 4 \cdot (1/8)/(1/2) = 1$ for
  $i=1,2$.
    Hence $\rho_1,\rho_2\in C^{\infty,1}(\R;\R)$. Clearly, $\rho_3\in  C^{\infty}(\R;\R)$.
 Furthermore,
    $\rho_3'(x) = (1-2x^2)\exp(-x^2)$ for all $x\in\R$, which yields
    $\|\rho_3\|_\infty = \rho_3(1/\sqrt{2}) \le 1$.
    For every $x\in [-1,1]$ we have $|1-2x^2| \le 1 \le \exp(x^2)$,
    while for every $x\in\R$ with $|x| >1$ we get
    \[
    |1-2x^2|  = 2x^2-1 =  1 + \int_1^{x^2} 2\, dy  \le  1 + \int_1^{x^2} \exp(y)\, dy   =  1+ \exp(x^2) - \exp(1) \le \exp(x^2).
    \]
    Hence $\|\rho_3'\|_\infty \le 1$ and we conclude that $\rho_3\in C^{\infty,1}(\R;\R)$.
 Thus Property (i) is proven.

For every $x\in [1/2,1]$ we have $x\cdot \exp(-x^2) \ge \tfrac{1}{2}\cdot \exp(-1) = \tfrac{1}{2}\cdot \rho_3(1)$, which proves Property (iv) in Lemma \ref{pr2}.

Since $0\le \rho_1 \le 1/8$ we get
\[
\int_0^{\tfrac{1}{2}} (\rho_1(x))^2\, dx \le \frac{1}{128}.
\]
Since $\rho_1$ is decreasing on [0,1/2] we have
\[
\int_0^{\tfrac{1}{2}} (\rho_1(x))^2\, dx \ge \int_0^{\tfrac{1}{4}} (\rho_1(x))^2\, dx \ge \frac{1}{4}\cdot (\rho_1(\tfrac{1}{4}))^2 = \frac{1}{4}\cdot \Bigl(\frac{1}{16}\Bigr)^2 =
\frac{1}{1024}.
\]
Furthermore, since $\rho_2$ is increasing on [1/2,1] we get
\[
\int_{\tfrac{1}{2}}^1 \rho_2(x)\, dx \ge \int_{\tfrac{3}{4}}^1 \rho_2(x)\, dx \ge \frac{1}{4}\cdot\rho_2(\tfrac{3}{4})
= \frac{1}{4}\cdot \frac{1}{16} = \frac{1}{64},
\]
which yields Property (v) and completes the proof of the lemma.
 \end{proof}

Using the functions $\rho_1,\rho_2,\rho_3$ from Lemma~\ref{pr3} we construct a subclass of unfavourable equations $\eq_u^*\subset \eq_u$.
Put
\begin{align*}
\Hc&=\bigl{\{}h\in C^{\infty,1}(\R; \R)\colon  \{h\neq 0\}
\subset [0, 1/2]\bigr{\}}, \\
\V_u&=\bigl{\{}v\in C^{\infty}(\R; \R)\colon |v|,|v'|\leq 1+u(|\cdot|)\bigr{\}}.
\end{align*}
We use a single diffusion coefficient $b$ given by
 \begin{equation}\label{diff}
b\colon \R^4\to \R^4,\quad (x_1,\dots,x_4)\mapsto (0, \rho_1(x_4),0,0),
\end{equation}
and for all $h\in \Hc$ and $v\in \V_u$ we define
   a drift coefficient $a^{h, v}$ by
\begin{equation}\label{drift}
a^{h, v}\colon \R^4 \to \R^4,\quad (x_1,\dots,x_4) \mapsto \bigl(\rho_2(x_4)\cdot \rho_3\bigl(\tfrac{x_3}{1+x_3^2}\cdot v(x_2) \bigr), 0, h(x_4),1\bigr).
\end{equation}
Put
\[
\eq_u^* = \{(a^{h, v},b)\colon h\in \Hc,\,v\in \V_u\}.
\]

\begin{lemma}\label{pr3}
We have $\eq_u^*\subset \eq_u$ and, consequently, for every $n\in\N$,
\[
e_n^{\text{ran}}\bigl(\ccF(\eq_u\times\{\pi_1\}); \sSDE\bigr)\geq e_n^{\text{ran}}\bigl(\ccF(\eq_u^*\times\{\pi_1\}); \sSDE\bigr).
\]
\end{lemma}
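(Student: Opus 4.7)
The plan is to verify directly that each $(a^{h,v},b)\in\eq_u^*$ lies in $\eq_u$; the lower bound on $e_n^{\text{ran}}(\ccF(\eq_u\times\{\pi_1\});\sSDE)$ then follows from the monotonicity of the worst case error under inclusion of input classes, since $\eq_u^*\subset\eq_u$ gives $\ccF(\eq_u^*\times\{\pi_1\})\subset\ccF(\eq_u\times\{\pi_1\})$, over which the defining supremum in $e_n^{\text{ran}}$ can only become larger.

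First I would note that smoothness of $b$ and $a^{h,v}$ is automatic: the building blocks $\rho_1,\rho_2,\rho_3$ from Lemma~\ref{pr2}, the real-analytic map $x_3\mapsto x_3/(1+x_3^2)$, and the functions $h,v\in C^\infty(\R;\R)$ are all $C^\infty$, and finite sums, products, and compositions of $C^\infty$ functions are $C^\infty$. Next I would check that $b\in C^{\infty,1}(\R^4;\R^4)$: boundedness by one reduces to $\|\rho_1\|_\infty\le 1/8$, and since $\partial_{x_i}b=0$ for $i\neq 4$, the first-order derivative condition reduces to the single bound $\|\rho_1'\|_\infty\le 1$, which follows from Lemma~\ref{pr1}(iv) applied to $\rho_1=1/8-\thet_{0,1/2,0,1/8}$.

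The main substance is to show $a^{h,v}\in C^{\infty,0}(\R^4;\R^4)$ and that its first-order partials satisfy the growth bound defining $\eq_u$. Boundedness by one is a component-wise check using $\|\rho_2\rho_3\|_\infty\le 1/8$, $\|h\|_\infty\le 1$ from $h\in\Hc\subset C^{\infty,1}(\R;\R)$, and the trivial fourth component $1$. For the first partials I would split by component: components two and four contribute zero derivatives, component three gives a single nonzero derivative of modulus $\le\|h'\|_\infty\le 1$, and component one is the only nontrivial case. Writing $g(x_3)=x_3/(1+x_3^2)$, the chain rule expresses each partial of $\rho_2(x_4)\rho_3(g(x_3)v(x_2))$ as a product of factors bounded by one on $\R^4$ (using $\|\rho_2\|_\infty,\|\rho_2'\|_\infty,\|\rho_3\|_\infty,\|\rho_3'\|_\infty\le 1$ together with the elementary estimates $|g|\le 1/2$ and $|g'|\le 1$) and at most one factor of $v$ or $v'$, which is controlled by $1+u(|x_2|)\le 1+u(|x|)$ since $u$ is increasing and $|x_2|\le|x|$ in the max norm.

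There is no serious obstacle; the argument is a bookkeeping exercise using the estimates compiled in Lemmas~\ref{pr1} and~\ref{pr2}. The one point worth emphasizing is the role of $g(x_3)=x_3/(1+x_3^2)$ in place of $x_3$: it is precisely the uniform boundedness $|g|\le 1/2$, $|g'|\le 1$ that lets the first component of $a^{h,v}$ satisfy the required growth condition no matter how large $|x_3|$ becomes along trajectories of the SDE, while still permitting the third coordinate of the solution to transport information into the first coordinate via $v$ and $\rho_3$.
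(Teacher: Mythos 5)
Your proof is correct and follows essentially the same route as the paper: verify $(a^{h,v},b)\in\eq_u$ by direct computation of all first-order partials, using the estimates from Lemmas~\ref{pr1} and~\ref{pr2} together with $|x_3/(1+x_3^2)|\le 1/2$ and $|d/dx_3\,(x_3/(1+x_3^2))|\le 1$, and then invoke monotonicity of the worst-case error in the input class. The closing remark on why $x_3/(1+x_3^2)$ replaces $x_3$ is a nice supplement but not needed for the verification itself.
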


\begin{proof}
From $\rho_1\in C^{\infty,1}(\R;\R)$ we immediately get $b\in C^{\infty}(\R^4;\R^4)$
as well as
\[
\max_{\alpha\in \N_0^4\colon \alpha_1+\dots+\alpha_4\le 1}\|D^\alpha b\|_\infty= \max(\|\rho_1\|_\infty,\|\rho_1'\|_\infty) \le 1.
\]
From $\rho_2, \rho_3, h,v\in C^{\infty}(\R;\R)$ it is clear that $a^{h, v}\in C^{\infty}(\R^4;\R^4)$.
Furthermore, we have $\|a^{h, v}\|_\infty \le \max(\|\rho_2\|_\infty\cdot \|\rho_3\|_\infty,
\|h\|_\infty,1) = 1$ since $\rho_2, \rho_3, h \in C^{\infty,1}(\R;\R)$.
For every $x=(x_1,\dots,x_4)\in\R^4$ we have
\[
D^{\alpha}a^{h, v}(x) = \begin{cases}
0, & \text{if }\alpha = (1,0,0,0),\\
(\rho_2(x_4)\cdot \tfrac{x_3}{1+x_3^2}\cdot v'(x_2) \cdot\rho_3'\bigl(\tfrac{x_3}{1+x_3^2}\cdot v(x_2) \bigr),0,0,0), & \text{if }\alpha = (0,1,0,0),\\
(\rho_2(x_4)\cdot \tfrac{1-x_3^2}{(1+x_3^2)^2}\cdot v(x_2) \cdot\rho_3'\bigl(\tfrac{x_3}{1+x_3^2}\cdot v(x_2) \bigr),0,0,0), & \text{if }\alpha = (0,0,1,0),\\
(\rho_2'(x_4)\cdot \rho_3\bigl(\tfrac{x_3}{1+x_3^2}\cdot v(x_2) \bigr), 0,
h'(x_4),0),
& \text{if }\alpha = (0,0,0,1),
\end{cases}
\]
and therefore
\begin{align*}
|D^{\alpha}a^{h, v}(x)| & \le  \begin{cases}
0, & \text{if }\alpha = (1,0,0,0),\\
|v'(x_2)|, & \text{if }\alpha = (0,1,0,0),\\
|v(x_2)|, & \text{if }\alpha = (0,0,1,0),\\
 1, & \text{if }\alpha = (0,0,0,1)\end{cases} \\
 & \le 1+ u(|x_2|) \le 1+ u(|x|)
\end{align*}
since $\rho_2,\rho_3,h\in C^{\infty,1}(\R;\R)$, $v\in \V_u$ and $u$ is increasing.

Hence $(a^{h, v},b)\in \eq_u$, which finishes the proof.
\end{proof}

Next, we determine the values of $\sSDE$ on  $\ccF(\eq_u^*,\{\pi_1\})$.

\begin{lemma}\label{pr3aa}
For every $h\in \Hc$ and every $v\in \V_u$ the solution $X^{a^{h,v},b}$ of \eqref{sde} with the drift coefficient $a=a^{h,v}$ given by \eqref{drift} and
   the diffusion coefficient $b$ given by \eqref{diff} satisfies
\[
\EEE[X^{a^{h, v},b}(1)] = \frac{c_{\rho_2}}{\sqrt{2\pi
c_{\rho_1}}}\cdot\int_\R \rho_3\Bigl(\tfrac{\int_0^{1/2} h (t)\,
dt}{1+(\int_0^{\1/2} h (t)\, dt)^2}\cdot v(x)\Bigr)\cdot
\exp(-\tfrac{x^2}{2 c_{\rho_1}})\,dx.
\]
\end{lemma}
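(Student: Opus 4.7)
The plan is to exploit the triangular structure of the SDE system, which together with the support properties of $\rho_1$ and $\rho_2$ from Lemma~\ref{pr2} will make the solution essentially explicit. Write $X=X^{a^{h,v},b}=(X_1,X_2,X_3,X_4)$. Because $b$ has only a nontrivial second component and $a^{h,v}$ in its last three components depends only on $x_4$ (and, for the third one, nothing at all beyond a deterministic $h(x_4)$, while the fourth is constant $1$), the system decouples from the bottom up.

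First I would integrate the trivial lower components. Since the fourth equation is $dX_4(t)=dt$ with $X_4(0)=0$, we get $X_4(t)=t$. The third equation then reduces to $dX_3(t)=h(t)\,dt$, so $X_3(t)=\int_0^t h(s)\,ds$; since $\{h\neq 0\}\subset [0,1/2]$ by definition of $\Hc$, for every $t\ge 1/2$ we have $X_3(t)=H:=\int_0^{1/2}h(s)\,ds$. The second equation is $dX_2(t)=\rho_1(t)\,dW(t)$, hence $X_2(t)=\int_0^t\rho_1(s)\,dW(s)$; since $\{\rho_1=0\}=[1/2,\infty)$ by Lemma~\ref{pr2}(ii), for every $t\ge 1/2$ the value $X_2(t)=Z:=\int_0^{1/2}\rho_1(s)\,dW(s)$ is the same centered Gaussian random variable with variance $\int_0^{1/2}\rho_1^2(s)\,ds=c_{\rho_1}$.

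Next I would treat the first component. From the SDE,
\[
X_1(1)=\int_0^1 \rho_2(t)\cdot \rho_3\Bigl(\tfrac{X_3(t)}{1+X_3(t)^2}\cdot v(X_2(t))\Bigr)\,dt.
\]
By Lemma~\ref{pr2}(ii) the factor $\rho_2(t)$ vanishes for $t\le 1/2$, so only the interval $[1/2,1]$ contributes; but on that interval both $X_3(t)=H$ and $X_2(t)=Z$ are constant in $t$ by the previous step. Pulling the $t$-independent factor out of the integral yields
\[
X_1(1)=\Bigl(\int_{1/2}^1\rho_2(t)\,dt\Bigr)\cdot \rho_3\Bigl(\tfrac{H}{1+H^2}\cdot v(Z)\Bigr)=c_{\rho_2}\cdot \rho_3\Bigl(\tfrac{H}{1+H^2}\cdot v(Z)\Bigr).
\]
Taking expectations and inserting the Gaussian density of $Z$, namely $x\mapsto (2\pi c_{\rho_1})^{-1/2}\exp(-x^2/(2c_{\rho_1}))$, gives the claimed formula.

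There is no real obstacle here; the only points that deserve care are the justification that $H$ and $Z$ are indeed stationary on $[1/2,1]$ (which rests exactly on the vanishing sets of $h$ and $\rho_1$) and the pointwise transfer from the Itô integral to the deterministic $t$-integral in the first component, which is licit because for each fixed $\omega$ the integrand $t\mapsto \rho_2(t)\rho_3(\cdot)$ is continuous and bounded. A clean write-up would proceed in the four bullet steps above, concluding with the change-of-variable that expresses $\EEE[\rho_3((H/(1+H^2))\,v(Z))]$ as the stated Gaussian integral.
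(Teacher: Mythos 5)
Your proof is correct and follows essentially the same approach as the paper: solve the triangular system bottom-up, substitute $X_4(t)=t$ into the lower components, use the support properties of $h$, $\rho_1$, and $\rho_2$ to see that $X_2$ and $X_3$ freeze at time $1/2$ and $X_1$ only accumulates after time $1/2$, identify $X_2(1/2)\sim\mathrm{N}(0,c_{\rho_1})$ by the Itô isometry, and write the expectation as a Gaussian integral. The write-up is sound and matches the paper's argument step for step.
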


\begin{proof}
Let $h\in \Hc$ and $v\in \V_u$, and write $X$ in place of $X^{a^{h,v},b}$. By definition of $a^{h,v}$ and $b$ we have
\begin{align*}
X_1(t)&=\1_{[1/2, 1]}(t)\cdot \rho_3\Bigl(\tfrac{X_3(1/2)}{1+(X_3(1/2))^2}\cdot v(X_2(1/2))\Bigr)\cdot \int_{1/2}^t \rho_2(s)ds,\\
X_2(t)&=\int_0^{\min(t, 1/2)}\rho_1(s)dW(s),\\
X_3(t)&=\int_0^{\min(t, 1/2)}h(s)ds,\\
X_4(t)&=t
\end{align*}
for all $t\in [0,1]$, and therefore
\[
X_1(1) = c_{\rho_2}\cdot \rho_3\Bigl(\tfrac{\int_0^{1/2} h (t)\, dt}{1+(\int_0^{\1/2} h (t)\, dt)^2}\cdot v(X_2(1/2))\Bigr).
\]
Furthermore,
\[
X_2(1/2) = \int_0^{1/2}\rho_1(s)\, dW(s) \,\sim\, \text{N}(0,c_{\rho_1}),
\]
such that
\[
 \EEE(X_1(1)) = \frac{c_{\rho_2}}{\sqrt{2\pi c_{\rho_1}}}\cdot\int_\R \rho_3\Bigl(\tfrac{\int_0^{1/2} h (t)\, dt}{1+(\int_0^{\1/2} h (t)\, dt)^2}\cdot v(x)\Bigr)\cdot \exp(-\tfrac{x^2}{2 c_{\rho_1}})\,dx
\]
as claimed.
\end{proof}

Motivated by Lemma~\ref{pr3aa} we consider the following family of non-linear integration problems in the setting of Section \ref{Sec3}. Take
\[
A=\R, \quad B = \R^{\N_0},\quad \ccF(\Hc)=\{(h^{(k)})_{k\in \N_0}\colon h\in \Hc\},
\]
and for $v \in \V_u$ define $S_v^{\text{int}}\colon \ccF(\Hc)\to\R$ by
\[
S^{\text{int}}_v\bigl((h^{(k)})_{k\in \N_0}\bigr)=\frac{c_{\rho_2}}{\sqrt{2\pi c_{\rho_1}}}\cdot\int_\R \rho_3\Bigl(\tfrac{\int_0^{1/2} h (t)\, dt}{1+(\int_0^{\1/2} h (t)\, dt)^2}\cdot v(x)\Bigr)\cdot \exp(-\tfrac{x^2}{2 c_{\rho_1}})\,dx.
\]

\begin{lemma}\label{pr3a}
For every $v\in \V_u$ and every $n\in\N$ we have
\[
e_n^{\text{ran}}\bigl(\ccF(\eq_u^*\times\{\pi_1\}); \sSDE\bigr) \ge
e_n^{\text{ran}}\bigl(\ccF(\Hc); S_v^{\text{int}}\bigr).
\]
\end{lemma}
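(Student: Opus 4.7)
The plan is to show that every generalized randomized algorithm for the SDE quadrature problem on the restricted class $\ccF(\eq_u^*\times\{\pi_1\})$ with cost at most $n$ can be converted into a generalized randomized algorithm for the one-dimensional integration problem $S_v^{\text{int}}$ on $\ccF(\Hc)$ with at most the same cost and at most the same worst case mean error. Taking infima over such algorithms would then immediately yield the claimed inequality $e_n^{\text{ran}}\bigl(\ccF(\eq_u^*\times\{\pi_1\}); \sSDE\bigr) \ge e_n^{\text{ran}}\bigl(\ccF(\Hc); S_v^{\text{int}}\bigr)$.

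The key structural observation I would establish first is that, for every $h\in\Hc$ and every $x=(x_1,x_2,x_3,x_4)\in\R^4$, the full bundle $\bigl(D^\alpha a^{h,v}(x),\, D^\alpha b(x),\, D^\alpha \pi_1(x)\bigr)_{\alpha\in\N_0^4}$ that constitutes a single SDE-style query is a smooth (hence measurable) function of $x$ and of the one-dimensional data $\bigl(h^{(k)}(x_4)\bigr)_{k\in\N_0}$ alone, given the fixed and known functions $\rho_1,\rho_2,\rho_3,v$. Indeed, $b$ and $\pi_1$ do not depend on $h$, and among the four components of $a^{h,v}$ only the third, $(x_1,\dots,x_4)\mapsto h(x_4)$, depends on $h$; for this component $D^\alpha$ vanishes unless $\alpha=(0,0,0,k)$ for some $k\ge 0$, in which case it equals $h^{(k)}(x_4)$. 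Combined with Lemma \ref{pr3aa}, which gives $\sSDE\bigl((D^\alpha a^{h,v}, D^\alpha b, D^\alpha \pi_1)_\alpha\bigr) = S_v^{\text{int}}\bigl((h^{(k)})_{k\in\N_0}\bigr)$, this shows that with $v$ fixed the two problems are essentially identical under the bijection $h \leftrightarrow (a^{h,v},b)$.

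Given this, I would fix an arbitrary $\Sh \in \Aran$ on the SDE class, write $\Sh = \Sh_{\psi,\nu,\varphi}$ with $\sup_g \EE\,\nu(g,\cdot)\le n+\rho$, and build an integration algorithm $\Sh'$ by simulation: the $\ell$-th sampling node in $\R$ issued by $\Sh'$ is the fourth coordinate of the node in $\R^4$ that $\Sh$ would request; upon receiving the response $(h^{(k)}(\cdot))_{k\in\N_0}$ the algorithm applies the smooth reconstruction map from the previous paragraph to form the SDE-style observation, which it then passes on to the original $\psi_{\ell+1}$, and the stopping rule $\nu$ and the output map $\varphi_\nu$ are applied unchanged. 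By construction $\Sh'$ and $\Sh$ produce identical outputs pathwise in $\omega$ on corresponding inputs, and they use the same number of queries, so $\cost(\Sh')\le n+\rho$ and $e(\Sh')\le e(\Sh)$. Letting $\rho\downarrow 0$ and taking the infimum over $\Sh$ delivers the inequality.

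The only genuinely technical point requiring care will be verifying that the new sampling mappings $\psi'_\ell$, the stopping rule $\nu'$ and the output mappings $\varphi'_\ell$ are measurable, so that $\Sh'$ is indeed an element of $\Aran$. This reduces to composing the measurable components in the representation of $\Sh$ with the smooth reconstruction map, which depends only on the fixed functions $\rho_1,\rho_2,\rho_3,v$ and finitely many coordinates of $x$; it is therefore routine and should pose no real obstacle.
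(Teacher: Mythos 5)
Your proposal is correct and follows essentially the same route as the paper: you use Lemma~\ref{pr3aa} to factor $S_v^{\text{int}}=\sSDE\circ T$, observe that the SDE-style query bundle at $x$ is reconstructible from $x$ and $(h^{(k)}(x_4))_{k}$ alone (the paper's auxiliary map $\widetilde T$), and then simulate any randomized SDE algorithm by an integration algorithm using the fourth coordinate of each requested node, preserving cost exactly and not increasing worst case error.
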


\begin{proof}
Let $v\in \V_u$ and $n\in\N$. We use $\Aran_\text{sde}$ and $\Aran_\text{int}$ to denote the class of randomized algorithms for the approximation of
$\sSDE\colon \ccF(\eq_u^*\times\{\pi_1\})\to\R$
and $S_v^{\text{int}}$, respectively.
In order to prove the lemma we define
$T\colon \ccF(\Hc) \to \ccF(\eq_u^*\times\{\pi_1\})$ by
\[
T((h^{(k)})_{k\in \N_0}) =
(D^\alpha a^{h,v}, D^\alpha b, D^\alpha \pi_1)_{\alpha\in \N_0^4}.
\]
By Lemma~\ref{pr3aa},
\begin{equation}\label{first}
S_v^{\text{int}} = \sSDE \circ T.
\end{equation}
Below we show
\begin{equation}\label{sec}
\forall\,\Sh^{\text{sde}}\in \Aran_\text{sde}\,\,\,\,\exists\,\Sh_v^{\text{int}}\in \Aran_\text{int}\colon\, \cost(\Sh_v^{\text{int}}) \le \cost(\Sh^{\text{sde}})\,\,\text{ and }\,\,  \Sh_v^{\text{int}} = \Sh^{\text{sde}}\circ T.
\end{equation}
Clearly, \eqref{first} and \eqref{sec} jointly imply that for every $\Sh^{\text{sde}}\in \Aran_\text{sde}$ with $\cost(\Sh^{\text{sde}})\le n$ there exists $\Sh_v^{\text{int}}\in \Aran_\text{int}$ such that $\cost(\Sh_v^{\text{int}}) \le n$
and
\begin{align*}
e(\Sh^{\text{sde}}) & = \sup_{g\in \ccF(\eq_u^*\times\{\pi_1\})} \EE|\sSDE(g)- \Sh^{\text{sde}}(g)| \\
&\geq \sup_{g\in \ccF(\{(a^{h, v},b)\colon h\in \Hc\}\times\{\pi_1\})} \EE|\sSDE(g)- \Sh^{\text{sde}}(g)|\\
&= \sup_{g\in \ccF(\Hc)} \EE|\sSDE\circ T(g)- \Sh^{\text{sde}}\circ T(g)| = e(\Sh_v^{\text{int}}).
\end{align*}
Hence
$e_n^{\text{ran}}\bigl(\ccF(\eq_u^*\times\{\pi_1\}); \sSDE\bigr) \ge
e_n^{\text{ran}}\bigl(\ccF(\Hc); S_v^{\text{int}}\bigr)$.

It remains to prove~\eqref{sec}. To this end we first note that there exists a mapping $\widetilde T\colon \R^4 \times \R^{\N_0}\to (\R^4\times \R^4\times \R)^{\N_0^4}$
 such that $T g(x) = \widetilde T(x,g(x_4))$ for all $g\in \ccF(\Hc)$ and all $x=(x_1,\dots,x_4)\in\R^4$.
Next, let $\Sh^{\text{sde}}\in \Aran_\text{sde}$ be given by a probability space $(\Omega, \A, \PP)$ and a triple $(\psi,\nu,\varphi)$, i.e., $\Sh^{\text{sde}} = \Sh^{\text{sde}}_{\psi,\nu,\varphi}$ with mappings
\[
\begin{aligned}
\psi_k\colon & \bigl((\R^4\times \R^4 \times \R)^{\N_0^4}\bigr)^{k-1}\times \Omega \to \R^4,\quad k\in\N,\\
\nu\colon & \ccF(\eq_u^*\times\{\pi_1\}) \times \Omega \to \N,\\
\varphi_k\colon & \bigl((\R^4\times \R^4 \times \R)^{\N_0^4}\bigr)^{k}\times \Omega \to \R,\quad k\in\N,
\end{aligned}
\]
see Section \ref{Sec3}. Let $\pi_4\colon \R^4\to \R$ denote the
projection on the fourth component. We define mappings
\[
\begin{aligned}
\widetilde \psi_k\colon & \bigl(\R^{\N_0}\bigr)^{k-1}\times \Omega \to \R,\quad k\in\N,\\
\tilde \nu\colon & \ccF(\Hc) \times \Omega \to \N,\\
\widetilde\varphi_k\colon & \bigl(\R^{\N_0}\bigr)^{k}\times \Omega \to \R,\quad k\in\N,
\end{aligned}
\]
by taking $\widetilde \psi_1=\pi_4(\psi_1)$ and
\begin{align*}
\widetilde \psi_k(\tilde y_1,\dots,\tilde y_{k-1},\omega) & = \pi_4(\psi_k(y_1,\dots,y_{k-1},\omega)),\quad k\geq 2\\
\tilde \nu (g,\omega) & = \nu (Tg,\omega),\\
\widetilde\varphi_k(\tilde y_1,\dots,\tilde y_{k},\omega)& = \varphi_k(y_1,\dots, y_{k},\omega),\quad k\in\N,
\end{align*}
where $y_1 = \widetilde T(\psi_1(\omega), \tilde y_1), y_2= \widetilde T(\psi_2(y_1,\omega), \tilde y_2),\dots, y_k=\widetilde T(\psi_k(y_1,\dots,y_{k-1},\omega), \tilde y_k)$. Put $\widetilde \psi =(\widetilde \psi_k)_{k\in\N}$ and $\widetilde \varphi =(\widetilde \varphi_k)_{k\in\N}$. Then
$\Sh^{\text{int}}_{v,(\tilde \psi,\tilde \nu,\tilde \varphi)} \in \Aran_\text{int}$ and by the definition of the mappings $\widetilde \psi_k$, $\tilde \nu$ and $\widetilde \varphi_k$ we have $\Sh^{\text{int}}_{v,(\tilde \psi,\tilde \nu,\tilde \varphi)}(g) =  \Sh^{\text{sde}}_{\psi,\nu,\varphi}(Tg)$ for all $g\in \ccF(\Hc)$. Moreover,
\[
\sup_{g\in \ccF(\Hc)}\EE \tilde \nu (g,\cdot)  = \sup_{g\in \ccF(\Hc)}  \EE \nu (Tg,\cdot) \leq  \sup_{g\in \ccF(\eq_u^*\times\{\pi_1\})}  \EE \nu (g,\cdot),
\]
which shows that $\cost(\Sh_{v,(\tilde \psi,\tilde \nu,\tilde \varphi)}^{\text{int}}) \le \cost(\Sh^{\text{sde}}_{\psi,\nu,\varphi})$ and thus finishes the proof of \eqref{sec}.
\end{proof}

In view of Lemma~\ref{pr3a} it suffices to establish appropriate lower bounds for the minimal errors $e_n^{\text{ran}}\bigl(\ccF(\Hc); S_v^{\text{int}}\bigr)$. To this end we first construct unfavourable functions $h\in \Hc$ in Lemma~\ref{pr4} and then employ Proposition~\ref{prop1} in Lemma~\ref{pr5}.

\begin{lemma}\label{pr4}
For every $m\in\N$ there exist $h_1,\dots,h_{m}\in \Hc$ with the following properties.
\begin{itemize}
\item[(i)] The sets $\{h_{1}\neq 0\}$, \ldots, $\{h_{m}\neq 0\}$
are pairwise disjoint.\\[-.3cm]
\item[(ii)] For all $i\in\{1,\ldots, m\}$ we have
\[
\int_0^{\tfrac{1}{2}}h_{i}(t)\,dt=\frac{1}{(12m)^2}.
\]
\end{itemize}
\end{lemma}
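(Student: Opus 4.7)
The plan is to build each $h_i$ as a translated and rescaled copy of a single fixed smooth bump supported in the unit interval, with the copies placed in disjoint subintervals of $[0, 1/2]$. I would first construct a reference bump
\[
\phi_0 = \theta_{0, 1/2, 0, 1} - \theta_{1/2, 1, 0, 1},
\]
using the sigmoidal function from Lemma \ref{pr1}. Properties (i)--(iv) of Lemma \ref{pr1} immediately give $\phi_0 \in C^\infty(\R;\R)$ with $\{\phi_0 \neq 0\} \subset [0, 1]$, $0 \le \phi_0 \le 1$, and $\|\phi_0'\|_\infty \le 16$. A short computation using the symmetry $\theta(y) + \theta(1-y) = 1$ of the underlying sigmoid (which yields $\int_0^{1/2}\theta_{0,1/2,0,1}(x)\,dx = 1/4$, and similarly for $\theta_{1/2,1,0,1}$) shows that $\int_\R \phi_0(x)\, dx = 1/2$.

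I would then partition $[0, 1/2]$ into the disjoint subintervals $I_i = [(i-1)/(2m), i/(2m)]$, $i = 1, \dots, m$, and set
\[
h_i(x) = c \cdot \phi_0\bigl(2m\bigl(x - \tfrac{i-1}{2m}\bigr)\bigr),
\]
with a constant $c > 0$ to be determined. By construction $\{h_i \neq 0\} \subset I_i \subset [0, 1/2]$, which takes care of the support condition in the definition of $\Hc$ as well as property (i) of the lemma. A change of variables gives $\int_0^{1/2} h_i(t)\, dt = c/(4m)$, so the choice $c = 1/(36m)$ makes this equal $1/(12m)^2$, establishing property (ii).

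It then remains to verify $h_i \in \Hc$, that is $\|h_i\|_\infty \le 1$ and $\|h_i'\|_\infty \le 1$. The first is immediate from $\|h_i\|_\infty = c \le 1$. The second follows from the chain rule: $\|h_i'\|_\infty = 2 m c \cdot \|\phi_0'\|_\infty \le 32 m c = 8/9 < 1$. The construction itself is elementary, and I do not expect any serious obstacle. The only point that requires some care, and the reason for the value $1/(12m)^2$ appearing in the statement, is the three-way tension between the sup-norm bound, the Lipschitz bound, and the support length: compressing the support to length $1/(2m)$ amplifies the Lipschitz constant of the rescaled bump by a factor of $m$, forcing the amplitude $c$ to be of order $1/m$, which in turn caps the achievable integral at order $1/m^2$.
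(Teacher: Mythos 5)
Your construction is correct and is essentially the same as the paper's: in both cases one places $m$ translated copies of a single smooth bump on the disjoint subintervals $((i-1)/(2m),\,i/(2m))$ of $[0,1/2]$ and balances amplitude, Lipschitz constant, and support width, which is exactly what forces the integral to be of order $1/m^2$. The only cosmetic difference is that you compute $\int_\R\phi_0 = 1/2$ exactly via the symmetry $\theta(y)+\theta(1-y)=1$, whereas the paper normalizes by an unknown constant $c_0 = \int h$ and only lower-bounds it via the plateau of its bump; both routes are fine, and your bound $\|\phi_0'\|_\infty\le 16$ is valid though slightly loose (the two $\theta'$ terms have disjoint supports, so $8$ would do).
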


\begin{proof}
Let
\[
h\colon \R\to [0,1/(6m)],\quad x\mapsto
\begin{cases}
\thet_{0,\tfrac{1}{6m},0,\tfrac{1}{6m} }(x), & \text{if }x\leq \tfrac{2}{6m},\\[.3cm]
\tfrac{1}{6m}-\thet_{\tfrac{2}{6m},\tfrac{3}{6m},0,\tfrac{1}{6m}
}(x),& \text{if }x>\tfrac{2}{6m},
\end{cases}
\]
and put
\[
c_0 = \int_0^{\tfrac{3}{6m}} h(x)\, dx.
\]
For $i= 1, \dots, m $ we define
\[
h_{i}\colon \R\to \R, \quad x\mapsto \frac{1}{c_0\cdot (12m)^2}\cdot h \Bigl(x-\tfrac{i-1}{2m}\Bigr).
\]

Let $i\in \{1,\dots,m\}$.
From Lemma \ref{pr1}(i) we get $h\in C^\infty(\R;\R)$ and therefore we have $h_i\in C^\infty(\R;\R)$. By the definition of $h$ we have $h(x) = 1/(6m)$ for all  $x\in [ 1/(6m),2/(6m)]$, which implies
\[
c_0 \ge \int_{\tfrac{1}{6m}}^{\tfrac{2}{6m}} h(x)\, dx = \frac{1}{(6m)^2}.
\]
It follows that
\[
\|h_i\|_\infty = \frac{1}{c_0\cdot (12m)^2}\cdot \|h\|_\infty \leq
\frac{1}{4\cdot 6m} \le 1
\]
and, using Lemma \ref{pr1}(iv),
\[
\|h_i'\|_\infty = \frac{1}{c_0\cdot (12m)^2}\cdot \|h'\|_\infty \le \frac{ \|h'\|_\infty}{4} = 1.
\]
Hence $h_i\in C^{\infty,1}(\R;\R)$.

By the definition of $h$ we have $\{h\neq 0\} = (0,3/(6m))$, which implies
\[
\{h_i\neq 0\} = \bigl(\tfrac{i-1}{2m},\tfrac{i}{2m}\bigr) \subset [0,1/2].
\]
Thus, $h_i\in \Hc$ and statement (i) of the lemma holds.

Finally,
\[
\int_0^{\tfrac{1}{2}}h_{i}(x)\,dx = \frac{1}{c_0\cdot (12m)^2}\cdot \int_0^{\tfrac{3}{6m}} h(x)\, dx = \frac{1}{(12m)^2},
\]
which proves statement (ii) of the lemma and finishes the proof.
\end{proof}

\begin{lemma}\label{pr5}
For every $v\in \V_u$ and every $n\in\N$ we have
\[
e_n^{\text{ran}}\bigl(\ccF(\Hc); S_v^{\text{int}}\bigr) \ge \frac{
c_{\rho_2}}{68\sqrt{2\pi c_{\rho_1}}} \cdot\int_\R
\rho_3\Bigl(\tfrac{4\cdot (102 n)^2} {1+ 16\cdot (102 n)^4}\cdot
v(x)\Bigr)\cdot \exp\bigl(-\tfrac{x^2}{2 c_{\rho_1}}\bigr)\,dx.
\]
\end{lemma}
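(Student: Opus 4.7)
The plan is to apply Proposition~\ref{prop1} to the nonlinear integration problem $(\ccF(\Hc), S_v^{\text{int}})$ with $A = \R$, $B = \R^{\N_0}$ and reference point $b^* = (0,0,\dots) \in B$. The test functions will come from Lemma~\ref{pr4} with the choice $m = 17 n$. This specific value is dictated by matching constants in the target bound: one has $144\,m^2 = (12 \cdot 17 \cdot n)^2 = (2 \cdot 102 \cdot n)^2 = 4 (102 n)^2$, so
\[
\alpha := \int_0^{1/2} h_i(t)\,dt = \frac{1}{144 m^2} = \frac{1}{4 (102 n)^2},
\]
and consequently $\alpha/(1+\alpha^2) = 4(102n)^2/(1+16(102n)^4)$, which is exactly the argument of $\rho_3$ that appears in the conclusion of the lemma.

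For each $i \in \{1,\dots,m\}$ I would take $\ff_{i,+} = (h_i^{(k)})_{k \in \N_0}$ and $\ff_{i,-} = ((-h_i)^{(k)})_{k \in \N_0}$. Property~(i) of Proposition~\ref{prop1} then holds because each $h_i$ is smooth and vanishes identically outside the interval $[(i-1)/(2m), i/(2m)]$; smoothness together with one-sided vanishing at the endpoints forces all derivatives of $\pm h_i$ to vanish at those endpoints as well, so $\{\ff_{i,\pm} \neq b^*\} \subset \bigl((i-1)/(2m), i/(2m)\bigr)$, and these open intervals are pairwise disjoint by Lemma~\ref{pr4}(i). Property~(ii) is immediate since $\pm h_i \in \Hc$.

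For Property~(iii) I would exploit the oddness of $\rho_3$ from Lemma~\ref{pr2}(iii): evaluating $S_v^{\text{int}}$ at $\pm h_i$ and subtracting yields
\[
S_v^{\text{int}}(\ff_{i,+}) - S_v^{\text{int}}(\ff_{i,-}) = \frac{2\, c_{\rho_2}}{\sqrt{2\pi c_{\rho_1}}} \int_\R \rho_3\Bigl(\tfrac{4 (102 n)^2}{1 + 16 (102 n)^4}\, v(x)\Bigr) \exp\Bigl(-\tfrac{x^2}{2 c_{\rho_1}}\Bigr)\,dx =: \varepsilon.
\]
If $\varepsilon \geq 0$, Proposition~\ref{prop1} applies with this $\varepsilon$ and $m = 17 n$ and delivers
\[
e_n^{\text{ran}}(\ccF(\Hc); S_v^{\text{int}}) \geq \frac{m - 16 n}{8 m}\, \varepsilon = \frac{1}{136}\, \varepsilon,
\]
which, after absorbing the factor $2$ in the definition of $\varepsilon$ into the prefactor $1/136$, is precisely the asserted inequality. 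If $\varepsilon < 0$ then the right-hand side of the target bound is nonpositive and the estimate is trivial (or, equivalently, one swaps the labels $\ff_{i,+} \leftrightarrow \ff_{i,-}$ and uses oddness of $\rho_3$ once more).

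The main obstacle is really just the careful bookkeeping of constants — identifying $144\,m^2$ with $4(102n)^2$ and combining the factor $1/136$ from Proposition~\ref{prop1} with the factor $2$ produced by oddness of $\rho_3$ to recover the prefactor $1/68$ — together with the minor sign subtlety arising from the fact that $v \in \V_u$ need not have a definite sign. No additional analytic input beyond Lemma~\ref{pr2}, Lemma~\ref{pr4} and Proposition~\ref{prop1} is required.
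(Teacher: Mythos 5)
Your proposal is correct and follows essentially the same route as the paper: apply Proposition~\ref{prop1} with $m=17n$, $b^*=0\in\R^{\N_0}$, test functions $g_{i,\pm}=(\pm h_i^{(k)})_{k\in\N_0}$ built from Lemma~\ref{pr4}, and use the oddness of $\rho_3$ to evaluate $S_v^{\text{int}}(g_{i,+})-S_v^{\text{int}}(g_{i,-})$; the constant arithmetic $(12\cdot 17n)^2=4(102n)^2$ and $(m-16n)/(8m)=1/136$ matches the paper exactly. The only addition beyond the paper's proof is your (correct but optional) remark handling the sign of $\varepsilon$ when $v$ is not of definite sign.
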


\begin{proof}
Let $v\in \V_u$ and $n\in\N$ and choose $h_1,\dots,h_{17n}\in \Hc$ according to Lemma \ref{pr4} with $m=17n$. By Lemma \ref{pr4}(ii)  and the fact that $\rho_3(t) = -\rho_3(-t)$ for all $t\ge 0$ we obtain
\begin{align*}
& S^{\text{int}}_v((h_i^{(k)})_{k\in\N_0})-S_v^{\text{int}}(-(h_i^{(k)})_{k\in\N_0})\\ & \qquad\qquad\qquad = \frac{2c_{\rho_2}}{\sqrt{2\pi c_{\rho_1}}}\cdot\int_\R \rho_3\Bigl(\tfrac{4\cdot (102 n)^2}{1+ 16\cdot (102 n)^4}\cdot v(x)\Bigr)\cdot \exp\bigl(-\tfrac{x^2}{2 c_{\rho_1}}\bigr)\,dx
\end{align*}
for $i=1,\dots,17n$. It remains to apply Proposition \ref{prop1} with $A=\R$, $B=\R^{\N_0}$, $\ccF=\ccF(\Hc)$, $m=17n$, $g_{i,+}=(h_i^{(k)})_{k\in\N_0}$,  $g_{i,-}=(-h_i^{(k)})_{k\in\N_0}$ and $b^*=0\in \R^{\N_0}$.
\end{proof}

The following lemma provides a technical tool for the construction
of unfavorable functions $v_n\in \V_u$ carried out in
Lemma~\ref{pr7}.

\begin{lemma}\label{pr6}
Let $\delta >0$ and let $\xl\in (0,\infty)$ satisfy $\inf_{x\ge \xl} u(x)/x \ge \delta$.
Then the function
\begin{equation}\label{func}
\vl\colon [1,\infty)\to [0,\infty),\quad x\mapsto \Bigl(\int_0^{x-1} u(y)\, dy\Bigr)^{1/2}
\end{equation}
satisfies
 \\[-.4cm]
\begin{itemize}
\item[(i)] $\vl$ is strictly increasing, differentiable on $(1,\infty)$
and $\vl([1,\infty)) = [0,\infty)$,\\[-.3cm]
\item[(ii)] $\forall\, x\in [1,\infty)\colon \, \vl(x)\le \frac{1}{\sqrt{\delta}}\,u(\max(x-1,x_\delta))$,\\[-.3cm]
\item[(iii)] $\forall\, x\in [x_\delta+1,\infty)\colon \, \vl'(x)\ge \sqrt{\delta}/2$,\\[-.3cm]
\item[(iv)] $\forall\, y\in [\vl(2),\infty)\colon\, \vl^{-1}(y) \le 4u^{-1}(y^2)$.
\end{itemize}
\end{lemma}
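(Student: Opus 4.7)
The strategy is to unpack $\vl(x)^2 = F(x) := \int_0^{x-1} u(y)\,dy$ and verify each of the four properties by elementary calculus, using only the given growth hypothesis on $u$.

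For (i), since $u$ is continuous and strictly increasing with $u\ge 0$, it follows that $u(s)>0$ for $s>0$. By the fundamental theorem of calculus $F$ is differentiable on $(1,\infty)$ with $F'(x)=u(x-1)>0$, so $F$ is strictly increasing there. Hence $\vl=\sqrt{F}$ is strictly increasing and differentiable on $(1,\infty)$. Continuity at $x=1$ follows from $F(1)=0$, and surjectivity $\vl([1,\infty))=[0,\infty)$ follows from continuity together with $F(x)\to\infty$, which is immediate from $\int_{x_\delta}^{x-1}\delta s\,ds\to\infty$ via the hypothesis $u(s)\ge\delta s$ on $[x_\delta,\infty)$.

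For (ii), bound $F(x)\le(x-1)\,u(x-1)$ by monotonicity. If $x-1\ge x_\delta$, the hypothesis yields $(x-1)\le u(x-1)/\delta$, giving $F(x)\le u(x-1)^2/\delta$. If $x-1<x_\delta$, then $F(x)\le x_\delta\,u(x_\delta)$, and using $x_\delta\le u(x_\delta)/\delta$ (which is again the hypothesis applied at $x_\delta$) one obtains $F(x)\le u(x_\delta)^2/\delta$. Taking square roots and combining the two cases gives (ii). For (iii), differentiate $\vl'(x)=u(x-1)/(2\vl(x))$ on $(1,\infty)$. For $x\ge x_\delta+1$, property (ii) gives $\vl(x)\le u(x-1)/\sqrt{\delta}$, whence $\vl'(x)\ge u(x-1)\cdot\sqrt{\delta}/(2\,u(x-1))=\sqrt{\delta}/2$.

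For (iv), the plan is to show $\vl(4w)\ge y$ where $w:=u^{-1}(y^2)$; by strict monotonicity of $\vl$ this is equivalent to $\vl^{-1}(y)\le 4w$. The basic estimate is a rectangle bound from below: provided $4w-2\ge w$, i.e.\ $w\ge 2/3$, monotonicity of $u$ gives
\[
\vl(4w)^2=\int_0^{4w-1}u(s)\,ds\ge\int_{4w-2}^{4w-1}u(s)\,ds\ge u(4w-2)\ge u(w)=y^2.
\]
The main obstacle will be the boundary regime where $w$ is small, i.e.\ $y$ is close to $\vl(2)$; there the simple rectangle bound above is too weak and one must exploit the hypothesis $y\ge\vl(2)$, i.e.\ $y^2\ge\int_0^1 u(s)\,ds$, together with the monotonicity and strict increase of $u$ on $[0,1]$, to force $w=u^{-1}(y^2)$ to lie in the regime where the rectangle argument applies. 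Carrying this out cleanly may require splitting into a sub-case when $x=\vl^{-1}(y)$ is already moderately large (say $x\ge 3$, where $(x-1)/2\ge 1$ makes the rectangle bound immediate after relocating it to the interval $[(x-1)/2, x-1]$), and a sub-case near $x=2$ where a direct estimate of $\int_0^1 u(s)\,ds$ in terms of $u$ at an appropriate point in $[0,1]$ is needed.
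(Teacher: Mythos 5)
Your treatment of (i)--(iii) is correct and essentially the paper's argument. For (iii) you substitute (ii) into the formula $\vl'(x)=u(x-1)/(2\vl(x))$, while the paper plugs the cruder bound $\vl(x)\le\sqrt{(x-1)u(x-1)}$ in directly; the computation is the same. For (iv), the rectangle estimate you set up --- showing $\vl(4w)^2\ge\int_{4w-2}^{4w-1}u\ge u(4w-2)\ge u(w)=y^2$ for $w=u^{-1}(y^2)$ --- is precisely the paper's idea written in contrapositive form, and it is valid exactly when $4w-2\ge w$, i.e.\ $w\ge 2/3$.

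You are right to be suspicious of the boundary regime, but the repair you sketch cannot be carried out, because the difficulty is in the statement, not the proof. The hypothesis $y\ge\vl(2)$, i.e.\ $y^2\ge\int_0^1 u$, does \emph{not} force $w\ge 2/3$; and at $y=\vl(2)$ the claimed inequality reduces to $\int_0^1 u\ge u(1/2)$, which fails when $u$ is concave on $[0,1]$. For instance $u(x)=\ln(1+x)$ for $x\le 1$, extended by $u(x)=\ln 2+(x-1)$ for $x>1$ (so that the linear-growth hypothesis holds with, say, $\delta=1/2$, $x_\delta=1$), gives $\int_0^1 u=2\ln 2-1\approx 0.386<\ln(3/2)\approx 0.405=u(1/2)$, so (iv) is false at $y=\vl(2)$ for this $u$. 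The paper's own proof has the same silent gap: it infers $\vl^{-1}(y)-2\ge\vl^{-1}(y)/4$ from $\vl^{-1}(y)\ge 2$, whereas that inequality actually requires $\vl^{-1}(y)\ge 8/3$. Both arguments are therefore correct only for $y\ge\vl(8/3)$ (note $\vl(8/3)^2=\int_0^{5/3}u\ge u(2/3)$, so this range does give $w\ge 2/3$, covering your case split), which is enough for the downstream use, where $\vl^{-1}(y)=\alpha_n\ge x_\delta+2$ and $x_\delta\ge 1$ in all the corollaries. So the conclusion to draw from the obstacle you noticed is that the threshold $\vl(2)$ in (iv) should be raised slightly (or the constant $4$ enlarged), not that a further elementary estimate will close the gap.
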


\begin{proof}
Since $u$ is continuous and positive, the mapping $\vl$ is strictly increasing
 on $[1,\infty)$
and continuously differentiable
    on $(1,\infty)$
with
\[
\vl'(x) = \frac{u(x-1)}{2}\cdot \Bigl(\int_0^{x-1} u(y)\, dy\Bigr)^{-1/2}
\]
for every $x\in (1,\infty)$. Clearly, $\vl(1) = 0$. Moreover, for
$x\in [x_\delta+2,\infty)$ we have $\vl^2(x) \ge \int_{x-2}^{x-1}
u(y)\,dy \ge u(x-2)\ge \delta\cdot (x-2)$, which implies
$\lim_{x\to\infty} \vl(x)=\infty$ and completes the proof of (i).

For $x\in [1,\infty)$ we have
\begin{align*}
\vl^2(x)  & \le \int_0^{\max(x-1,x_\delta)} u(y)\, dy \le \max(x-1,x_\delta)\cdot u(\max(x-1,x_\delta))\\
 & \le \delta^{-1}\cdot u^2(\max(x-1,x_\delta)),
\end{align*}
which implies (ii).

Next, let $x\in [x_\delta+1,\infty)$. Then $x-1\ge x_\delta$, and therefore
 $u(x-1)\ge \delta\cdot (x-1)$. It follows
\[
\vl'(x) \ge \frac{u(x-1)}{2}\cdot \bigl((x-1)\cdot u(x-1)\bigr)^{-1/2} = \frac{1}{2}\Bigl(\frac{u(x-1)}{x-1}\Bigr)^{1/2} \ge \frac{\sqrt{\delta}}{2},
\]
which proves (iii).

Finally, let $y\in [\vl(2),\infty)$. Then $\vl^{-1}(y)\ge 2$, which implies $\vl^{-1}(y)-2 \ge \vl^{-1}(y)/4$. It follows
\begin{align*}
y^2 = (\vl(\vl^{-1}(y)))^2 \ge \int^{\vl^{-1}(y)-1}_{\vl^{-1}(y)-2} u(x)\, dx\ge u( \vl^{-1}(y)-2)\ge u( \vl^{-1}(y)/4),
\end{align*}
which shows (iv) and finishes the proof of the lemma.
\end{proof}

\begin{lemma}\label{pr7}
Let $\delta >0$, let $\xl\in (0,\infty)$ satisfy $\inf_{x\ge \xl} u(x)/x \ge \delta$, let
\[
\kappa_\delta = \frac{8}{\sqrt{\delta}}\cdot \max(u(x_\delta+1),1)
\]
and let $\vl\colon [1,\infty)\to [0,\infty)$ be the function given by \eqref{func}. Let $n\in\N$ and let
\[
\alpha_n = \vl^{-1}\Bigl(\kappa_\delta\cdot \tfrac{1+ 16\cdot (102 n)^4}{4\cdot (102 n)^2}\Bigr).
\]
Then there exists a function
$v_n\in C^\infty(\R;(0,\infty))$ such that\\[-.4cm]
 \begin{itemize}
\item[(i)] $v_n$ is strictly increasing,\\[-.2cm] 
\item[(ii)] $v_n,v_n'\le 1 + u(|\cdot|)$,\\[-.2cm]
\item[(iii)] $\int_0^{\alpha_n} \rho_3\bigl(\tfrac{4\cdot (102 n)^2}{1+ 16\cdot (102 n)^4}\cdot v_n(x)\bigr)\, dx \ge \frac{\rho_3(1)}{4}$.
\end{itemize}
\end{lemma}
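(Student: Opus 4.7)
\emph{Proof proposal.} The plan is to design $v_n$ so that the rescaled function $\lambda_n v_n$, with $\lambda_n := 4(102n)^2/(1+16(102n)^4)$, takes values in $[1/2,1]$ on some $x$-interval of length at least $1/2$ lying inside $[0,\alpha_n]$. Once this is arranged, property (iii) follows immediately: on that window, $\rho_3(\lambda_n v_n(x))\geq \rho_3(1)/2$ by Lemma~\ref{pr2}(iv), and integrating over a set of length $1/2$ gives at least $\rho_3(1)/4$.

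To locate the window, I take
\[
\beta_n=\max\bigl(x_\delta+1,\,u^{-1}\bigl(\max(1/\lambda_n-1,\,0)\bigr)\bigr),
\]
so that $1+u(\beta_n)\geq 1/\lambda_n$, making a slope $1/\lambda_n$ admissible for $v_n'$. The crucial matching condition is $\beta_n+1/2\leq\alpha_n$. Using $\zeta_u(\beta_n+1/2)^2=\int_0^{\beta_n-1/2}u(y)\,dy\leq \beta_n\,u(\beta_n)$ and splitting into the two cases selected by the outer max, a direct computation invoking $u(x)\geq\delta x$ for $x\geq x_\delta$ in the first case and bounding $\beta_n u(\beta_n)\leq u(x_\delta+1)^2/\delta$ in the second case, together with $\lambda_n\leq 1$ and the choice $\kappa_\delta=(8/\sqrt{\delta})\max(u(x_\delta+1),1)$, yields $\zeta_u(\beta_n+1/2)\leq \kappa_\delta/\lambda_n$. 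Applying the increasing function $\zeta_u^{-1}$ then gives $\beta_n+1/2\leq\alpha_n$.

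I next construct $v_n\in C^\infty(\R;(0,\infty))$, strictly increasing, piecewise: on $(-\infty,\beta_n]$ let $v_n$ grow from a small positive value up to $1/(2\lambda_n)$ with derivative bounded at each point by $1+u(|\cdot|)$, which is feasible because $(1+u(\beta_n))\cdot\beta_n\geq (1/\lambda_n)\cdot 1\geq 1/(2\lambda_n)$ (using $\beta_n\geq x_\delta+1\geq 1$); on $[\beta_n,\beta_n+1/2]$ let $v_n$ be essentially linear with slope $1/\lambda_n$, traversing $[1/(2\lambda_n),1/\lambda_n]$; and on $[\beta_n+1/2,\infty)$ continue with slope $1$ to preserve strict monotonicity. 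The joins are smoothed using the mollifier $\theta_{\tau_1,\tau_2,v_1,v_2}$ from Lemma~\ref{pr1}, which preserves monotonicity and allows quantitative control of the derivative across the transitions.

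Property (i) is built into the construction, and property (iii) follows from the paragraph above. The principal obstacle is property (ii), requiring $v_n,v_n'\leq 1+u(|\cdot|)$ everywhere. On the window $[\beta_n,\beta_n+1/2]$ this is satisfied because $v_n\leq 1/\lambda_n\leq 1+u(\beta_n)\leq 1+u(x)$ and $v_n'\leq 1/\lambda_n\leq 1+u(x)$, by the very choice of $\beta_n$; on $(-\infty,\beta_n]$ it is built in by construction and on the tail it is immediate. The delicate point, and the main technical difficulty, is the simultaneous control of $\beta_n$ from above (so that the window fits inside $[0,\alpha_n]$) and from below (so that the slope $1/\lambda_n$ is admissible); the specific constant $\kappa_\delta=(8/\sqrt{\delta})\max(u(x_\delta+1),1)$ is tuned precisely to leave the slack needed to satisfy both inequalities simultaneously.
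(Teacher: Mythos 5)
There is a genuine gap in your construction of $v_n$ on $(-\infty,\beta_n]$, and it concerns the \emph{pointwise} bound $v_n\le 1+u(|\cdot|)$ rather than the derivative bound. Your feasibility argument, that $(1+u(\beta_n))\cdot\beta_n\ge 1/(2\lambda_n)$, only checks that enough ``integrated slope'' is in principle available to climb to $1/(2\lambda_n)$ by $x=\beta_n$; it does not address whether $v_n(x)\le 1+u(x)$ can hold for all $x<\beta_n$. Because $v_n$ is increasing and $v_n'\le 1+u$, one has
\[
v_n(\beta_n-t)\ \ge\ v_n(\beta_n)-\int_{\beta_n-t}^{\beta_n}(1+u(y))\,dy\ \ge\ \frac{1}{2\lambda_n}-t\cdot\frac{1}{\lambda_n},
\]
so in particular $v_n(\beta_n-\tfrac14)\ge 1/(4\lambda_n)$. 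If $u$ is close to its linear lower bound $\delta x$ up to some point and then jumps steeply to the value $1/\lambda_n-1$ over a short interval ending at $\beta_n=u^{-1}(1/\lambda_n-1)$, then $1+u(\beta_n-\tfrac14)$ can be much smaller than $1/(4\lambda_n)$, and the required bound $v_n\le 1+u(|\cdot|)$ fails. Such a $u$ is admissible: continuous, strictly increasing, and satisfying $\inf_{x\ge x_\delta}u(x)/x\ge\delta$, since a steep upward jump only helps that constraint. So ``on $(-\infty,\beta_n]$ it is built in by construction'' is exactly the step that cannot be carried out with your choice of $\beta_n$.

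What saves the paper's argument is that the climb window is placed at $[\alpha_n-\tfrac12,\alpha_n]$ and the function $v_n$ is essentially $\vl/\kappa_\delta$ with $\vl(x)=(\int_0^{x-1}u)^{1/2}$. The location $\alpha_n$ is determined by the condition $\int_0^{\alpha_n-1}u=(\kappa_\delta/\lambda_n)^2$, i.e.\ by the \emph{integral} of $u$ rather than by a level set of $u$, which positions the window far enough to the right. The square-root structure then supplies the needed pointwise control: Lemma~\ref{pr6}(ii) gives $\vl(x)\le \delta^{-1/2}\,u(\max(x-1,x_\delta))$, which with the choice $\kappa_\delta=8\delta^{-1/2}\max(u(x_\delta+1),1)$ makes $\vl(x)/\kappa_\delta\le\max(1/8,u(x))$ automatically. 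Your $\beta_n$ is chosen from a level of $u$, and this is not rigid enough to rule out a steep jump just below $\beta_n$ that starves $v_n$ of headroom. To repair the proposal you would need to replace the level-set placement by something integral-based (which is precisely what $\vl$ encodes), at which point you would essentially be re-deriving the paper's Lemma~\ref{pr6}/\ref{pr7} construction.

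As a minor point, the description of the two cases for verifying $\beta_n+\tfrac12\le\alpha_n$ appears swapped: the bound $\beta_n u(\beta_n)\le u(x_\delta+1)^2/\delta$ belongs to the case $\beta_n=x_\delta+1$, while in the case $\beta_n=u^{-1}(1/\lambda_n-1)>x_\delta+1$ one should instead use $u(\beta_n)\ge\delta\beta_n$ to get $\beta_n u(\beta_n)\le u(\beta_n)^2/\delta<1/(\delta\lambda_n^2)\le\kappa_\delta^2/\lambda_n^2$. The underlying estimates are recoverable, so this is cosmetic; the pointwise-bound issue above is the substantive gap.
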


\begin{proof}
Note that $\alpha_n$ is well-defined due to Lemma \ref{pr6}(i). We put
\[
\beta_n = \frac{1+ 16\cdot (102 n)^4}{4\cdot (102 n)^2}.
\]

We first show that
\begin{equation}\label{e22}
\alpha_n \ge \xl+2
\end{equation}
and
\begin{equation}\label{e23}
\vl(\alpha_n)-\vl(\alpha_n-1) \ge \frac{\sqrt{\delta}}{2}.
\end{equation}

By Lemma \ref{pr6}(ii) and the fact that $\kappa_\delta\ge u(\xl+1)/\sqrt{\delta}$ we have
\[
\vl(\xl+2)\le \frac{1}{\sqrt{\delta}}u(\xl+1) \le \kappa_\delta\le \kappa_\delta\cdot \beta_n,
\]
which yields \eqref{e22} since $\vl$ is increasing. From \eqref{e22} and Lemma \ref{pr6}(iii) we obtain that $\inf_{x\in [\alpha_n-1,\alpha_n]}\vl'(x) \ge \sqrt{\delta}/2$, which implies \eqref{e23} by the mean value theorem.

Note that $\alpha_n > 2$ due to \eqref{e22} and put
\[
k_n = \max\{k\in\N\colon \alpha_n-k >1\}.
\]
Clearly,
\begin{equation}\label{e24}
\alpha_n-k_n \in (1,2].
\end{equation}

Observing \eqref{e23} and the fact that $\vl$ is increasing we may define a function $v_n\colon \R\to \R$ by
\[
v_n(x)=\frac{1}{\kappa_\delta}\cdot
\begin{cases}
\eta_{\alpha_n-k_n,\vl(\alpha_n-k_n)}(x), & \text{if } x\in (-\infty,\alpha_n-k_n),\\
\theta_{\alpha_n+k,\alpha_n+k+1,\vl(\alpha_n+k),\vl(\alpha_n+k+1)}(x), & \text{if }x\in [\alpha_n+k,\alpha_n+k+1)\\
& \quad \text{and }k\in \{-k_n,\dots,-2\}\cup\N_0,\\
\theta_{\alpha_n-1,\alpha_n-1/2,\vl(\alpha_n-1),\vl(\alpha_n)-\sqrt{\delta}/4}(x), & \text{if }x\in [\alpha_n-1,\alpha_n-1/2),\\
\theta_{\alpha_n-1/2,\alpha_n,\vl(\alpha_n)-\sqrt{\delta}/4,\vl(\alpha_n)}(x), & \text{if }x\in [\alpha_n-1/2,\alpha_n).
\end{cases}
\]

By Lemma \ref{pr1}(i),(ii) we immediately get $v_n\in C^{\infty}(\R;(0,\infty))$. Furthermore, Property (i) is a straightforward consequence of Lemma \ref{pr1}(iii) together with Lemma \ref{pr6}(i).

We turn to the proof of Property (ii). First assume that $x\in (-\infty, \alpha_n-k_n)$.
Using Lemma \ref{pr1}(iii),(iv) as well as \eqref{e24}, the fact that $\vl$ is increasing and Lemma \ref{pr6}(ii) we get
\begin{align*}
v_n(x) + v_n'(x) & = \frac{1}{\kappa_\delta}\cdot \bigl(
\eta_{\alpha_n-k_n,\vl(\alpha_n-k_n)}(x) +\eta_{\alpha_n-k_n,\vl(\alpha_n-k_n)}'(x)\bigr)\\
& \le \frac{1+4\exp(-2)}{\kappa_\delta}\cdot \vl(\alpha_n-k_n)
 \le \frac{2}{\kappa_\delta}\cdot \vl(2) \\
& \le \frac{2}{\kappa_\delta}\cdot \vl(x_\delta +2) \le \frac{2}{\kappa_\delta}\cdot \frac{u(x_\delta +1)}{\sqrt{\delta}} \le \frac{1}{4}\le 1.
\end{align*}
Next, assume that $x\in [\alpha_n+k,\alpha_n+k+1)$ with $k\in\{-k_n,\dots,-2\}\cup \N_0$. Using Lemma \ref{pr1}(iii),(iv) and  the fact that $\vl$ is non-negative, we obtain
\begin{align*}
& v_n(x) + v_n'(x) \\
& \qquad = \frac{\theta_{\alpha_n+k,\alpha_n+k+1,\vl(\alpha_n+k),\vl(\alpha_n+k+1)}(x)+\theta_{\alpha_n+k,\alpha_n+k+1,\vl(\alpha_n+k),\vl(\alpha_n+k+1)}'(x)}{\kappa_\delta}\\
& \qquad \le  \frac{\vl(\alpha_n+k+1) + 4(\vl(\alpha_n+k+1)-\vl(\alpha_n+k))}{\kappa_\delta } \le  \frac{5\vl(\alpha_n+k+1) }{\kappa_\delta}.
\end{align*}
Note that $\alpha_n+k+1\ge \alpha_n-k_n+1 \ge 2$. Using Lemma
\ref{pr6}(ii) and the fact that $u$ is increasing we may therefore
conclude that
\[
\frac{5\cdot \vl(\alpha_n+k+1) }{\kappa_\delta}\le \frac{5\cdot u(\max(\alpha_n+k,x_\delta))}{8 \cdot \max(u(x_\delta+1),1)} \le \max (1,u(\alpha_n+k))\le 1 + u(x).
\]
Next, assume that $x\in [\alpha_n-1,\alpha_n-1/2)$. Using Lemma \ref{pr1}(iii),(iv), the fact that $\vl \ge 0$, \eqref{e22}, Lemma \ref{pr6}(ii) and  the fact that $u$ is increasing we get
\begin{align*}
& \max(v_n(x),v_n'(x)) \\
& \qquad = \frac{\max\bigl(\theta_{\alpha_n-1,\alpha_n-1/2,\vl(\alpha_n-1),\vl(\alpha_n)-\sqrt{\delta}/4}(x),\theta_{\alpha_n-1,\alpha_n-1/2,\vl(\alpha_n-1),\vl(\alpha_n)-\sqrt{\delta}/4}'(x)\bigr)}{\kappa_\delta}\\
& \qquad \le \frac{\max\bigl(\vl(\alpha_n)-\sqrt{\delta}/4, 8(\vl(\alpha_n)-\sqrt{\delta}/4-\vl(\alpha_n-1))\bigr)}{\kappa_\delta}\\
& \qquad \le \frac{8\vl(\alpha_n)}{\kappa_\delta} \le u(\alpha_n-1) \le u(x).
\end{align*}
Finally, assume that $x\in [\alpha_n-1/2,\alpha_n)$. Using Lemma \ref{pr1}(iii),(iv) as well as   \eqref{e23}, Lemma \ref{pr6}(ii), \eqref{e22} and the fact that $u$ is increasing we get
\begin{align*}
&  \max(v_n(x),v_n'(x))\\
& \qquad = \frac{\max\bigl(\theta_{\alpha_n-1/2,\alpha_n,\vl(\alpha_n)-\sqrt{\delta}/4,\vl(\alpha_n)}(x),\theta_{\alpha_n-1/2,\alpha_n,\vl(\alpha_n)-\sqrt{\delta}/4,\vl(\alpha_n)}'(x)\bigr)}{\kappa_\delta}\\
& \qquad \le \frac{\max(\vl(\alpha_n), 2\sqrt{\delta})}{\kappa_\delta }
\le \frac{4\vl(\alpha_n) }{\kappa_\delta}
\le \frac{1}{2}\cdot u(\alpha_n-1) \le u(x).
\end{align*}

It remains to prove Property (iii). Since $\rho_3 \ge 0$ on $[0,\infty)$,  see Lemma \ref{pr2}(iii),
we have
\[
\int_0^{\alpha_n} \rho_3(\tfrac{ v_n(x)}{\beta_n})\, dx \ge \int_{\alpha_n-1/2}^{\alpha_n} \rho_3(\tfrac{ v_n(x)}{\beta_n})\, dx.
\]
Let $x\in [\alpha_n-1/2,\alpha_n]$. Then
\[
\frac{ v_n(x)}{\beta_n}\le  \frac{\vl(\alpha_n)}{\beta_n\cdot \kappa_\delta } = 1.
\]
Furthermore, by Lemma \ref{pr1}(iii),
\begin{align*}
\frac{ v_n(x)}{\beta_n} & \ge  \frac{\vl(\alpha_n)-\sqrt{\delta}/4}{\beta_n\cdot \kappa_\delta } = 1 - \frac{\sqrt{\delta}}{4\beta_n\cdot \kappa_\delta }\ge 1 - \frac{\delta}{32\cdot u(x_\delta+1) }.
\end{align*}
Since $u(x_\delta+1) \ge \delta\cdot (x_\delta +1) \ge \delta$ we conclude that
$v_n(x)/\beta_n \in [1/2,1]$. Therefore,
by Lemma \ref{pr2}(iv),
\[
\rho_3(\tfrac{ v_n(x)}{\beta_n}) \ge \tfrac{\rho_3(1)}{2}.
\]
Hence
\[
 \int_{\alpha_n-1/2}^{\alpha_n}\rho_3(\tfrac{ v_n(x)}{\beta_n}) \, dx \ge \frac{\rho_3(1)}{4},
\]
which completes the proof of the lemma.
\end{proof}

\begin{prop}\label{pr8}
Let $\delta \in (0,\infty)$ and let $\xl\in (0,\infty)$ satisfy $\inf_{x\ge \xl} u(x)/x \ge \delta$ and let
\[
\kappa_\delta = \frac{8}{\sqrt{\delta}}\cdot \max(u(x_\delta+1),1)
\]
Then for every $n\in\N$,
\[
e_n^{\text{ran}}\bigl(\ccF(\eq_u\times\{\pi_1\}); \sSDE \bigr)\geq
\frac{1}{17\cdot 2^7\cdot e\cdot \sqrt{\pi}}
\cdot \exp\bigl(- 2^{13}\cdot
\bigl(u^{-1}(   17^2\cdot 56^4\cdot \kappa_\delta^2\cdot n^4)\bigr)^2\bigr).
\]
\end{prop}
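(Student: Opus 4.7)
The plan is to chain together Lemmas \ref{pr3}, \ref{pr3a}, and \ref{pr5}, substitute the function $v_n$ constructed in Lemma \ref{pr7}, and then estimate the resulting Gaussian-type integral using the support and size properties of $v_n$. Combining Lemmas \ref{pr3} and \ref{pr3a}, with the latter applied to $v = v_n \in \V_u$ (see Lemma \ref{pr7}(ii)), and then invoking Lemma \ref{pr5}, I obtain
\[
e_n^{\text{ran}}\bigl(\ccF(\eq_u\times\{\pi_1\}); \sSDE\bigr) \geq \frac{c_{\rho_2}}{68\sqrt{2\pi c_{\rho_1}}} \cdot \int_\R \rho_3\Bigl(\tfrac{v_n(x)}{\beta_n}\Bigr)\, \exp\bigl(-\tfrac{x^2}{2 c_{\rho_1}}\bigr)\, dx,
\]
where $\beta_n = \tfrac{1+16\cdot (102 n)^4}{4\cdot (102 n)^2}$. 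Because $\rho_3 \geq 0$ on $[0,\infty)$ by Lemma \ref{pr2}(iii), I would restrict the integration to $[0,\alpha_n]$, bound the Gaussian factor from below on this interval by $\exp(-\alpha_n^2/(2 c_{\rho_1}))$, and apply Lemma \ref{pr7}(iii) to the remaining $\rho_3$-integral, which yields
\[
e_n^{\text{ran}}\bigl(\ccF(\eq_u\times\{\pi_1\}); \sSDE\bigr) \geq \frac{c_{\rho_2}\,\rho_3(1)}{272\sqrt{2\pi c_{\rho_1}}}\cdot \exp\bigl(-\tfrac{\alpha_n^2}{2 c_{\rho_1}}\bigr).
\]

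Next, I would translate $\alpha_n = \vl^{-1}(\kappa_\delta\beta_n)$ into an expression involving $u^{-1}$. The inequality $\alpha_n \geq \xl + 2 \geq 2$, which is already established inside the proof of Lemma \ref{pr7}, guarantees that $\kappa_\delta\beta_n \geq \vl(2)$, so Lemma \ref{pr6}(iv) applies and gives $\alpha_n \leq 4\,u^{-1}(\kappa_\delta^2\beta_n^2)$. The elementary bounds $1+16\cdot (102n)^4 \leq 17\cdot (102n)^4$ together with $102^4 \leq 16\cdot 56^4$ (the latter because $102/56 < 2$) combine to
\[
\kappa_\delta^2 \beta_n^2 \leq \tfrac{289}{16}\cdot 102^4\cdot \kappa_\delta^2\cdot n^4 \leq 17^2\cdot 56^4\cdot \kappa_\delta^2\cdot n^4,
\]
and, using in addition $1/c_{\rho_1}\leq 2^{10}$ from Lemma \ref{pr2}(v) together with the monotonicity of $u^{-1}$, I arrive at $\alpha_n^2/(2c_{\rho_1}) \leq 2^{13}\bigl(u^{-1}(17^2\cdot 56^4\cdot \kappa_\delta^2\cdot n^4)\bigr)^2$.

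The prefactor is then estimated using $c_{\rho_2}\geq 1/64$, $c_{\rho_1}\leq 1/128$, and $\rho_3(1)=1/e$ (Lemma \ref{pr2}); in particular $\sqrt{2\pi c_{\rho_1}}\leq \sqrt{\pi}/8$, whence
\[
\frac{c_{\rho_2}\,\rho_3(1)}{272\sqrt{2\pi c_{\rho_1}}} \geq \frac{8}{64\cdot 272\cdot e\,\sqrt{\pi}} = \frac{1}{17\cdot 2^7 \cdot e \cdot \sqrt{\pi}},
\]
and assembling the three displayed estimates delivers Proposition~\ref{pr8}. The work is almost entirely careful bookkeeping; the only non-numerical subtlety is verifying that the hypothesis of Lemma \ref{pr6}(iv) is met at $y = \kappa_\delta\beta_n$, and that is taken care of by the lower bound on $\alpha_n$ inherited from Lemma \ref{pr7}.
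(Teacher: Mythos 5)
Your proposal is correct and matches the paper's own proof step for step: both chain Lemmas \ref{pr3}, \ref{pr3a}, \ref{pr5} with the function $v_n$ from Lemma \ref{pr7}, restrict the integral to $[0,\alpha_n]$, invoke Lemma \ref{pr7}(iii) and Lemma \ref{pr6}(iv), and finish with the identical numerical bounds from Lemma \ref{pr2}(v) (in particular $c_{\rho_1}\in[1/1024,1/128]$ for the exponent and prefactor, $c_{\rho_2}\ge 1/64$, $\rho_3(1)=1/e$, and $\beta_n\le\tfrac{17}{4}(102n)^2$ with $102^4\le 16\cdot 56^4$). No discrepancies.
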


\begin{proof}
Let $\vl\colon [1,\infty)\to [0,\infty)$ be the function given by \eqref{func}, let $n\in\N$ and choose a function $v_n\colon \R\to (0,\infty)$ according to Lemma \ref{pr7}. Note that $v_n\in \V_u$ due to Lemma~\ref{pr7}(ii). Hence, by Lemmas \ref{pr3}, \ref{pr3a}, \ref{pr5}, \ref{pr7} and Lemma~\ref{pr2}
\begin{align*}
e_n^{\text{ran}}\bigl(\ccF(\eq_u\times\{\pi_1\}); \sSDE \bigr) & \geq e_n^{\text{ran}}\bigl(\ccF(\Hc);S_{v_n}^{\text{int}}\bigr)\\
& \ge \frac{c_{\rho_2}}{68\sqrt{2\pi c_{\rho_1}}}
\cdot\int_\R \rho_3\Bigl(\tfrac{4\cdot (102 n)^2}{1+ 16\cdot (102 n)^4}\cdot v_n(x)\Bigr)\cdot \exp\bigl(-\tfrac{x^2}{2 c_{\rho_1}}\bigr)\,dx \\
& \ge \frac{  c_{\rho_2}}{68\sqrt{2\pi
c_{\rho_1}}}\cdot\int_0^{\alpha_n}
 \rho_3\Bigl(\tfrac{4\cdot (102 n)^2}{1+ 16\cdot (102 n)^4}\cdot v_n(x)\Bigr)\cdot \exp\bigl(-\tfrac{x^2}{2 c_{\rho_1}}\bigr)\,dx \\
& \ge \frac{c_{\rho_2}\cdot \rho_3(1)}{272\sqrt{2\pi c_{\rho_1}}}\cdot
 \exp\bigl(-\tfrac{\alpha_n^2}{2 c_{\rho_1}}\bigr)
 \ge \frac{1}{17\cdot 2^7\cdot e\cdot \sqrt{\pi}}
 \cdot \exp(- 2^9\cdot \alpha_n^2).
\end{align*}
Put
\[
\beta_n = \frac{1+ 16\cdot (102 n)^4}{4\cdot (102 n)^2}.
\]
By  \eqref{e22} we have $\alpha_n >2$. Hence $\kappa_\delta\cdot \beta_n \ge \vl(2)$ and we may apply Lemma \ref{pr6}(iv) to obtain
\[
\alpha_n = \vl^{-1}(\kappa_\delta\cdot \beta_n) \le
4u^{-1}(\kappa_\delta^2 \cdot \beta_n^2).\]
Use $\beta_n\le 17/4\cdot (102n)^2$ and the fact that $u^{-1}$ is increasing to complete the proof.
\end{proof}

Clearly, Proposition~\ref{pr8} implies Theorem~\ref{mainThm}.

\subsection{Proof of Theorem \ref{mainThm2}}\label{proof2}
Throughout the following let
$u\colon [0, \infty)\to [0,\infty)$ be strictly increasing,
  continuous
and satisfy $\displaystyle{\lim_{x\to\infty} u(x) = \infty}$.

Put
\[
\Hc_u=\{\hehe \in C^\infty(\R; \R)\colon |\hehe'|\leq 1+u\}.
\]
In the setting of Section~\ref{Sec3} we take
\[
A=\R, \quad B=\R^{\N_0},\quad \ccF(\Hc_u)=\{(h^{(k)})_{k\in\N_0}\colon h\in\Hc_u\},
\]
and we define $S^{\text{int}}\colon \ccF(\Hc_u)\to\R$ by
\[
S^{\text{int}}((h^{(k)})_{k\in\N_0})=\frac{1}{\sqrt{2\pi}}\int_\R \sin(\hehe(x))\cdot
\exp(-\tfrac{x^2}{2})\,dx.
\]

\begin{lemma}\label{pr9}
For every $n\in\N$ we have
\[
e_n^{\text{det}}(\ccF(\{(0,1)\}\times\cF_u); \sSDE)\geq e_n^{\text{det}}(\ccF(\Hc_u);S^{\text{int}}).
\]
\end{lemma}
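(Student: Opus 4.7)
The strategy is a reduction entirely analogous to the one in the proof of Lemma~\ref{pr3a}, but adapted to the deterministic setting and using the specific transformation $h\mapsto \sin\circ h$. For each $h\in \Hc_u$ set $f_h=\sin\circ h$. I would first check that $f_h\in \cF_u$: clearly $\|f_h\|_\infty\le 1$, so $f_h\in C^{\infty,0}(\R;\R)$, and $f_h'(x)=h'(x)\cos(h(x))$ gives $|f_h'(x)|\le|h'(x)|\le 1+u(|x|)$. Since $W(1)\sim N(0,1)$ we have
\[
\sSDE\bigl((D^\alpha 0,D^\alpha 1,D^\alpha f_h)_{\alpha\in\N_0}\bigr)=\EEE[\sin(h(W(1)))]=S^{\text{int}}\bigl((h^{(k)})_{k\in\N_0}\bigr).
\]
Denote by $T\colon\ccF(\Hc_u)\to\ccF(\{(0,1)\}\times\cF_u)$ the induced map $(h^{(k)})_{k\in\N_0}\mapsto(D^\alpha 0,D^\alpha 1,D^\alpha f_h)_{\alpha\in\N_0}$, so that $\sSDE\circ T=S^{\text{int}}$.

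The key pointwise observation, which makes the reduction possible, is that by the Fa\`a di Bruno formula each derivative $f_h^{(k)}(x)$ is a polynomial expression in $\sin(h(x))$, $\cos(h(x))$ and $h'(x),\dots,h^{(k)}(x)$, so it is completely determined by the tuple $g(x)=(h^{(k)}(x))_{k\in\N_0}$. Hence there exists a measurable map $\widetilde T\colon\R^{\N_0}\to(\R\times\R\times\R)^{\N_0}$ such that $(Tg)(x)=\widetilde T(g(x))$ for every $g\in\ccF(\Hc_u)$ and every $x\in\R$.

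From here the construction parallels that in Lemma~\ref{pr3a}. Let $\Sh^{\text{sde}}\in\Adet$ be a deterministic SDE algorithm with $\cost(\Sh^{\text{sde}})\le n$, represented as $\Sh^{\text{sde}}=\Sh_{\psi,\nu,\varphi}^{\text{sde}}$ with $(\psi,\nu,\varphi)$ satisfying \eqref{deter2} and $\sup_g\nu(g)\le n$. Define
\[
\tilde\psi_k(\tilde y_1,\dots,\tilde y_{k-1})=\psi_k\bigl(\widetilde T(\tilde y_1),\dots,\widetilde T(\tilde y_{k-1})\bigr),\ \ \tilde\nu(g)=\nu(Tg),\ \ \tilde\varphi_k(\tilde y_1,\dots,\tilde y_k)=\varphi_k\bigl(\widetilde T(\tilde y_1),\dots,\widetilde T(\tilde y_k)\bigr).
\]
A straightforward induction using $(Tg)(x)=\widetilde T(g(x))$ shows that the induced deterministic algorithm $\Sh^{\text{int}}=\Sh_{\tilde\psi,\tilde\nu,\tilde\varphi}^{\text{int}}$ for $S^{\text{int}}$ satisfies $\Sh^{\text{int}}(g)=\Sh^{\text{sde}}(Tg)$ for every $g\in\ccF(\Hc_u)$ and $\cost(\Sh^{\text{int}})\le\sup_g\tilde\nu(g)\le\sup_g\nu(g)\le n$. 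Combining this with $\sSDE\circ T=S^{\text{int}}$ gives
\[
e(\Sh^{\text{sde}})\ge\sup_{g\in\ccF(\Hc_u)}|\sSDE(Tg)-\Sh^{\text{sde}}(Tg)|=\sup_{g\in\ccF(\Hc_u)}|S^{\text{int}}(g)-\Sh^{\text{int}}(g)|=e(\Sh^{\text{int}})\ge e_n^{\text{det}}(\ccF(\Hc_u);S^{\text{int}}),
\]
and taking the infimum over $\Sh^{\text{sde}}$ yields the claim. No step should be difficult; the only minor bookkeeping issue is verifying that the pointwise factorization through $\widetilde T$ propagates correctly through the sequential definition of $N_k^\psi$, but this is routine.
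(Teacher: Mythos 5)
Your proposal is correct and follows essentially the same route as the paper: both check $\sin\circ h\in\cF_u$, define the same transfer map $T$ with $\sSDE\circ T=S^{\text{int}}$, invoke Fa\`a di Bruno to obtain a pointwise factorization $(Tg)(x)=\widetilde T(g(x))$ (the paper calls this map $\rho$), and then push a deterministic SDE algorithm $(\psi,\nu,\varphi)$ forward to an integration algorithm $(\tilde\psi,\tilde\nu,\tilde\varphi)$ of no greater cost. The only inessential difference is your mention of measurability of $\widetilde T$, which is not required by \eqref{deter2} in the deterministic setting.
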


\begin{proof}
We use $\Adet_\text{sde}$ and $\Adet_\text{int}$ to denote the
classes of deterministic algorithms for the approximation of
$\sSDE\colon \ccF(\{(0,1)\}\times\cF_u)\to\R$ and $S^{\text{int}}$,
respectively. Note that $\sin \circ\hehe \in\cF_u$ for every
$\hehe\in\Hc_u$. We can therefore define a mapping $T\colon
\ccF(\Hc_u)\to \ccF(\{(0,1)\}\times\cF_u)$ by
\[
T((h^{(k)})_{k\in\N_0}) = (0,1_{\{0\}}(k),(\sin\circ h)^{(k)})_{k\in\N_0}.
\]
Clearly,
\[
S^{\text{int}} = \sSDE \circ T.
\]
Similar to the proof of Lemma~\ref{pr3a} it thus remains to show that
\begin{equation}\label{det2}
\forall\,\Sh^{\text{sde}}\in \Adet_\text{sde}\,\,\,\,\exists\,\Sh^{\text{int}}\in \Adet_\text{int}\colon\, \cost(\Sh^{\text{int}}) \le \cost(\Sh^{\text{sde}})\text{\, and\, }  \Sh^{\text{int}} = \Sh^{\text{sde}}\circ T.
\end{equation}

In order to prove~\eqref{det2} we first note that for every $k\in\N$ there exists a mapping $\rho_k\colon \R^{k+1}\to\R$ such that
\[
(\sin\circ h)^{(k)} = \rho_k\circ (h,h^{(1)},\dots,h^{(k)})
\]
for every $k$-times differentiable function $h\colon \R\to\R$, and we define
\[
\rho\colon \R^{\N_0}\to  (\R\times \R\times\R)^{\N_0},\quad (x_k)_{k\in\N_0}\mapsto (0,1_{\{0\}}(k),\rho_k(x_0,\dots,x_k))_{k\in\N_0}.
\]
Next, let $\Sh^{\text{sde}} \in \Adet_\text{sde}$ be given by
$\Sh^{\text{sde}}=\Sh^{\text{sde}}_{\psi,\nu,\varphi}$  with
mappings
\[
\begin{aligned}
\psi_k\colon & \bigl((\R\times \R \times \R)^{\N_0}\bigr)^{k-1} \to \R,\quad k\in\N,\\
\nu\colon & \ccF(\{(0,1)\}\times\cF_u)  \to \N,\\
\varphi_k\colon & \bigl((\R\times \R \times \R)^{\N_0}\bigr)^{k} \to \R,\quad k\in\N,
\end{aligned}
\]
see Section \ref{Sec3}. We define mappings
\[
\begin{aligned}
\widetilde \psi_k\colon & \bigl(\R^{\N_0}\bigr)^{k-1}\to \R,\quad k\in\N,\\
\tilde \nu\colon & \ccF(\Hc_u) \to \N,\\
\widetilde\varphi_k\colon & \bigl(\R^{\N_0}\bigr)^{k} \to \R,\quad k\in\N,
\end{aligned}
\]
by taking $\widetilde \psi_1 = \psi_1$ and
\begin{align*}
\widetilde \psi_k(\tilde y_1,\dots,\tilde y_{k-1}) & = \psi_k(\rho(\tilde y_1),\dots,\rho(\tilde y_{k-1})),\quad k\ge 2,\\
\tilde \nu (g) & = \nu (Tg),\\
\widetilde\varphi_k(\tilde y_1,\dots,\tilde y_{k})& = \varphi(\rho(\tilde y_1),\dots,\rho(\tilde y_{k})),\quad k\in \N.
\end{align*}
Put $\widetilde \psi =(\widetilde \psi_k)_{k\in\N}$ and $\widetilde \varphi =(\widetilde \varphi_k)_{k\in\N}$.
Then
$\Sh^{\text{int}}_{\tilde \psi,\tilde \nu,\tilde \varphi} \in \Adet_\text{int}$ and by the definition of the mappings $\widetilde \psi_k$, $\tilde \nu$ and $\widetilde \varphi_k$ we have $\Sh^{\text{int}}_{\tilde \psi,\tilde \nu,\tilde \varphi}(g) =  \Sh^{\text{sde}}_{\psi,\nu,\varphi}(Tg)$ for all $g\in \ccF(\Hc_u)$. Moreover,
\[
\sup_{g\in \ccF(\Hc_u)}\tilde \nu (g,\cdot)  = \sup_{g\in \ccF(\Hc_u)}  \nu (Tg,\cdot) \le \sup_{g\in \ccF(\{(0,1)\}\times\cF_u)}   \nu (g,\cdot),
\]
which shows $\cost(\Sh_{\tilde \psi,\tilde \nu,\tilde \varphi}^{\text{int}}) \le \cost(\Sh^{\text{sde}}_{\psi,\nu,\varphi})$ and thus finishes the proof of \eqref{det2}.
\end{proof}

\begin{lemma}\label{pr10}
Let $n\in\N$ and put $z_n = \max(n, u(0))$. Then there exist functions
\[
\hehe_{1,+},\hehe_{1,-},\dots,\hehe_{2n,+},\hehe_{2n,-}\in \Hc_u
\]
 with the following properties.
\end{lemma}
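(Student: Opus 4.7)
My plan is to apply Proposition~\ref{prop2} with $m=2n$ pairs to the integration problem $S^{\text{int}}$ on $\ccF(\Hc_u)$, and to combine this with the reduction in Lemma~\ref{pr9} in order to derive Theorem~\ref{mainThm2}. Accordingly, the $2n$ pairs $h_{i,\pm}$ in the lemma should satisfy the hypotheses of Proposition~\ref{prop2}: the sets $\{h_{i,+}\neq 0\}\cup\{h_{i,-}\neq 0\}$ are pairwise disjoint; each $h_{i,\pm}\in\Hc_u$ and every sum $\sum_{i=1}^{2n} h_{i,\delta_i}$ lies in $\Hc_u$ with $S^{\text{int}}$ acting additively on such sums; and $S^{\text{int}}((h_{i,+}^{(k)})_{k\in\N_0})-S^{\text{int}}((h_{i,-}^{(k)})_{k\in\N_0})\ge \varepsilon_n$ with $\varepsilon_n$ of order $n^{-1}\exp(-(u^{-1}(z_n))^2)$, so that the conclusion $(m-n)/2\cdot\varepsilon_n=n\varepsilon_n/2$ of Proposition~\ref{prop2} delivers the target lower bound.

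Put $r_n:=u^{-1}(z_n)$, so that $u(|x|)\ge z_n\ge n$ for all $|x|\ge r_n$. I would subdivide $[r_n,r_n+1]$ into $2n$ consecutive closed subintervals $I_1,\dots,I_{2n}$ of length $L=1/(2n)$, and on each $I_i$ construct a smooth nonnegative bump $h_{i,+}$ with $\{h_{i,+}\neq 0\}\subset \operatorname{int}(I_i)$ that reaches the value $\pi/2$ on the middle portion of $I_i$, built by gluing two copies of $\theta_{\tau_1,\tau_2,v_1,v_2}$ from Lemma~\ref{pr1} (one increasing from $0$ to $\pi/2$ on the left part of $I_i$, one decreasing back to $0$ on the right). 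By Lemma~\ref{pr1}(iv) the derivative satisfies $\|h_{i,+}'\|_\infty\le C(\pi/2)/L=C\pi n$ for an absolute constant $C$, which after a harmless rescaling of $L$ by a fixed constant is at most $1+u(|x|)$ on $I_i$ since $u(|x|)\ge n$ there; thus $h_{i,+}\in\Hc_u$. Setting $h_{i,-}:=-h_{i,+}$ gives $h_{i,-}\in\Hc_u$ by the symmetry of the derivative bound. Property~(i) is immediate; for the additivity required in Proposition~\ref{prop2}(ii), the disjointness of supports makes $\sum_i h_{i,\delta_i}\in\Hc_u$ and, using $\sin(0)=0$, yields $\sin(\sum_i h_{i,\delta_i}(x))=\sum_i \sin(h_{i,\delta_i}(x))$ pointwise, which upon integration against the Gaussian density gives $S^{\text{int}}(\sum_i h_{i,\delta_i}^{(\cdot)})=\sum_i S^{\text{int}}(h_{i,\delta_i}^{(\cdot)})$.

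For the quantitative bound, using $h_{i,+}\ge 0$ with $h_{i,+}=\pi/2$ on a subinterval of $I_i$ of length at least $L/2$, together with $e^{-x^2/2}\ge e^{-(r_n+1)^2/2}$ on $I_i\subset[r_n,r_n+1]$, one obtains
\begin{align*}
 S^{\text{int}}((h_{i,+}^{(k)})_{k\in\N_0})-S^{\text{int}}((h_{i,-}^{(k)})_{k\in\N_0})
 &=\tfrac{2}{\sqrt{2\pi}}\int_{I_i}\sin(h_{i,+}(x))\,e^{-x^2/2}\,dx\\
 &\ge c\cdot n^{-1}\cdot e^{-(r_n+1)^2/2}.
\end{align*}
Combined with Proposition~\ref{prop2} this gives $e_n^{\text{det}}(\ccF(\Hc_u);S^{\text{int}})\ge c\cdot e^{-(r_n+1)^2/2}$, which via Lemma~\ref{pr9} transfers to the corresponding bound for $\sSDE$. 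The main obstacle I anticipate is precisely matching the form $\exp(-(u^{-1}(z_n))^2)$ demanded by Theorem~\ref{mainThm2} starting from the natural intermediate bound $\exp(-(r_n+1)^2/2)$; this amounts to absorbing the linear cross term $r_n$ in the exponent, handled by a constant adjustment when $r_n$ is large and trivially in the case $u(0)\ge n$ (where $r_n=0$ and the target reduces to an absolute-constant lower bound).
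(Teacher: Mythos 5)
Your construction is essentially the paper's proof: translated bumps of width $\asymp 1/n$ supported past $u^{-1}(z_n)$ so that the allowed derivative bound $1+u$ already exceeds $n$ there, oddness of $\sin$ and disjointness of supports feeding into Proposition~\ref{prop2}, and the linear cross term in $(u^{-1}(z_n)+C)^2$ absorbed into the multiplicative constant.

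One quantitative point should be resolved rather than waved away. With $2n$ bumps of height $\pi/2$ packed into $[r_n,r_n+1]$ (so $L=1/(2n)$), Lemma~\ref{pr1}(iv) gives $\|h_{i,+}'\|_\infty$ of order $\pi n$, which exceeds $1+u(|x|)\approx n$ by a fixed factor on the support; this cannot be cured by ``rescaling $L$'' downward, since narrowing the bumps only worsens the derivative. The two viable fixes are exactly the two trade-offs in play: either keep height $\pi/2$ but spread the $2n$ intervals over $[r_n,r_n+C]$ with $C$ of order $10$, in which case the Gaussian lower bound becomes $e^{-(r_n+C)^2/2}$ (still absorbable into the constant, but with a far worse absolute value), or keep the window to $[r_n,r_n+2]$ and lower the bump height so that $4\cdot\text{height}/(\text{rise length})\le n$ exactly; the paper takes the latter route with height $1/12$ and rise length $1/(3n)$, trading $\sin(\pi/2)=1$ for $\sin(1/12)$ but obtaining the much cleaner constant $\sin(1/12)e^{-4}/3$. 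Either way the asymptotic in $n$ is identical, so your argument is correct in substance once this parameter mismatch is repaired.
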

\begin{itemize}
\item[(i)] For every $i=1,\dots,2n$ we have
\[
 \{\hehe_{i,+}\not=0\}\cup\{\hehe_{i,-}\not=0\} = (u^{-1}(z_n)+\tfrac{i-1}{n},u^{-1}(z_n)+\tfrac{i}{n}).
 \]
\item[(ii)] For all $\delta_1,\dots,\delta_{2n}\in \{-,+\}$ we have $\sum_{i=1}^{2n} \hehe_{i,\delta_i}\in \Hc_u$ and
\[
\int_\R\sin\Bigl(\sum_{i=1}^{2n} \hehe_{i,\delta_i}(x)\Bigr)\, dx =  \sum_{i=1}^{2n}  \int_\R\sin(\hehe_{i,\delta_i}(x))\, dx.
\]
\item[(iii)] For every $i=1,\dots,2n$ we have
\[
\int_\R\sin(\hehe_{i,+}(x))\cdot \exp(-\tfrac{x^2}{2})\, dx\geq  \frac{\sin(\tfrac{1}{12})\cdot \exp(-4)}{3}\ \cdot \frac{\exp(-(u^{-1}(z_n))^2)}{n}.
\]
\end{itemize}

\begin{proof}
Let $n\in\N$ and define a function $\hehe\colon \R\to \R $ by
\[
\hehe(x) = \begin{cases}
\thet_{0,\tfrac{1}{3n},0,\tfrac{1}{12} }(x), & \text{if }x\leq \tfrac{2}{3n},\\
\tfrac{1}{12}-\thet_{\tfrac{2}{3n},\tfrac{1}{n},0,\tfrac{1}{12}
}(x),& \text{if }x > \tfrac{2}{3n}.
\end{cases}
\]
By Lemma~\ref{pr1} we have $\hehe\in
C^{\infty}(\R; \R)$ with
\begin{equation}\label{z2}
\{h\neq 0\} = (0,1/n),
\end{equation}
and
\begin{equation}\label{z2a}
0\le h \le \tfrac{1}{12}
\end{equation}
and
\begin{equation}\label{z3}
\|h'\|_\infty \le  n.
\end{equation}

For all $i\in\{1,\dots,2n\}$ and $\delta\in\{-,+\}$ we define
\[
\hehe_{i,\delta}\colon \R\to \R,\quad x\mapsto \delta \hehe\bigl(x-u^{-1}(z_n)-\tfrac{i-1}{n}\bigr).
\]

Clearly, \eqref{z2} implies Property (i).
Moreover, $\hehe_{i,\delta}\in C^{\infty}(\R; \R)$, and by \eqref{z3} and the fact that $u$ is increasing we get for every $x\in (u^{-1}(z_n)+\tfrac{i-1}{n},u^{-1}(z_n)+\tfrac{i}{n})$ that
\begin{equation}\label{z4}
|\hehe_{i,\delta}'(x)| \le n \le z_n = u(u^{-1}(z_n))\leq u(x),
\end{equation}
which proves that $\hehe_{i,\delta}\in \Hc_u$.

Let $\delta_1,\dots,\delta_n\in \{-,+\}$. Then $\sum_{i=1}^{2n} \hehe_{i,\delta_i}\in C^{\infty}(\R; \R)$, and using Property (i) as well as \eqref{z4} we have
$|\sum_{i=1}^{2n} \hehe_{i,\delta_i}'(x)| \le u(x)$
for every $x\in \R$. Thus $\sum_{i=1}^{2n} \hehe_{i,\delta_i}\in \Hc_u$. Furthermore, Property (i) implies that
\[
\sin \circ \sum_{i=1}^{2n} \hehe_{i,\delta_i} = \sum_{i=1}^{2n} \sin \circ \hehe_{i,\delta_i},
\]
which finishes the proof of Property (ii).

Finally, using Property (i) and \eqref{z2a} we get
\begin{align*}
\int_\R\sin(\hehe_{i,+}(x))\cdot \exp(-\tfrac{x^2}{2})\, dx&= \int_{u^{-1}(z_n)+\tfrac{i-1}{n}}^{u^{-1}(z_n)+\tfrac{i}{n}} \sin \bigl(\hehe_{i, +}(x)\bigr) \cdot \exp(-\tfrac{x^2}{2})\,dx\\
&\geq \exp(-\tfrac{(u^{-1}(z_n)+2)^2}{2})\cdot \int_{u^{-1}(z_n)+(i-1)/n}^{u^{-1}(z_n)+i/n} \sin \bigl(\hehe_{i,+}(x)\bigr)\,dx\\
& = \exp(-\tfrac{(u^{-1}(z_n)+2)^2}{2})\cdot \int_0^{\tfrac{1}{n}} \sin(h(x))\, dx\\
& \ge  \exp(-\tfrac{(u^{-1}(z_n)+2)^2}{2})\cdot \int_{\tfrac{1}{3n}}^{\tfrac{2}{3n}} \sin(h(x))\, dx\\
& =  \exp(-\tfrac{(u^{-1}(z_n)+2)^2}{2})\cdot\sin\bigl(\tfrac{1}{12}\bigr)\cdot \frac{1}{3n} \\
& \ge\exp(-(u^{-1}(z_n))^2-4)\cdot
\sin\bigl(\tfrac{1}{12}\bigr)\cdot \frac{1}{3n}
\end{align*}
for all $i=1,\dots,2n$, which shows Property (iii) and completes the proof of the lemma.
\end{proof}

\begin{prop}\label{pr11}
For every $n\in\N$ we have
\[
 e_n^{\text{det}}(\ccF(\Hc_u);S^{\text{int}}) \ge \frac{\sqrt{2}\cdot \sin(\tfrac{1}{12})\cdot \exp(-4)}{6\sqrt{\pi}}\ \cdot \exp(-(u^{-1}(\max(n,u(0))))^2).
\]
\end{prop}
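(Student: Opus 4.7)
The plan is to derive Proposition \ref{pr11} as a direct application of the deterministic lower bound in Proposition \ref{prop2} to the integration problem $S^{\text{int}}$ on $\ccF(\Hc_u)$, using the family of $4n$ functions provided by Lemma \ref{pr10}. Since $B = \R^{\N_0}$ is a linear space, Proposition \ref{prop2} is available, and all we need to do is to verify its three hypotheses and identify the right value of $\varepsilon$.

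First, I would fix $n \in \N$, let $h_{1,+}, h_{1,-}, \ldots, h_{2n,+}, h_{2n,-} \in \Hc_u$ be the functions produced by Lemma \ref{pr10} (so $h_{i,-} = -h_{i,+}$), and set $g_{i,\delta} = (h_{i,\delta}^{(k)})_{k \in \N_0} \in \ccF(\Hc_u)$ together with $b^\ast = 0 \in \R^{\N_0}$. Hypothesis (i) of Proposition \ref{prop2} is then immediate from Lemma \ref{pr10}(i): the supports of the $h_{i,\pm}$ are contained in the pairwise disjoint open intervals $(u^{-1}(z_n) + (i-1)/n, u^{-1}(z_n) + i/n)$, hence so are the sets $\{g_{i,\delta} \neq b^\ast\}$.

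Hypothesis (ii) follows from Lemma \ref{pr10}(ii) and the disjointness of supports: for any $\delta_1, \ldots, \delta_{2n} \in \{+,-\}$ the function $H = \sum_{i=1}^{2n} h_{i,\delta_i}$ lies in $\Hc_u$, so $\sum_{i=1}^{2n} g_{i,\delta_i} = (H^{(k)})_{k\in\N_0} \in \ccF(\Hc_u)$, and because the supports are pairwise disjoint we have $\sin(H(x)) = \sum_{i=1}^{2n} \sin(h_{i,\delta_i}(x))$ pointwise, which upon integrating against $\exp(-x^2/2)/\sqrt{2\pi}$ yields $S^{\text{int}}(\sum_i g_{i,\delta_i}) = \sum_i S^{\text{int}}(g_{i,\delta_i})$. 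For hypothesis (iii), using $h_{i,-} = -h_{i,+}$ and the oddness of $\sin$ I would compute
\[
S^{\text{int}}(g_{i,+}) - S^{\text{int}}(g_{i,-}) = \frac{2}{\sqrt{2\pi}} \int_\R \sin(h_{i,+}(x)) \exp(-\tfrac{x^2}{2})\, dx \ge \varepsilon,
\]
where by Lemma \ref{pr10}(iii) we may take
\[
\varepsilon = \frac{\sqrt{2}\,\sin(1/12)\,\exp(-4)}{3\sqrt{\pi}\, n}\cdot \exp(-(u^{-1}(z_n))^2),
\]
with $z_n = \max(n, u(0))$.

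Finally I apply Proposition \ref{prop2} with $m = 2n$ to conclude
\[
e_n^{\text{det}}(\ccF(\Hc_u); S^{\text{int}}) \ge \frac{m - n}{2}\, \varepsilon = \frac{n}{2} \cdot \frac{\sqrt{2}\,\sin(1/12)\,\exp(-4)}{3\sqrt{\pi}\, n}\cdot \exp(-(u^{-1}(z_n))^2),
\]
which simplifies precisely to the claimed bound. There is no genuine obstacle here beyond bookkeeping; the only nontrivial observation is that the disjointness of supports in Lemma \ref{pr10}(i) is exactly what converts the nonlinear functional $S^{\text{int}}$ into a linear functional on the finite-dimensional subfamily $\{\sum_i g_{i,\delta_i}\}$, and this is precisely the structural assumption Proposition \ref{prop2} demands.
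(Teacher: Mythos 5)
Your proof is correct and follows the same route as the paper's: apply Proposition~\ref{prop2} with $m=2n$, $b^*=0\in\R^{\N_0}$ and $g_{i,\delta}=(h_{i,\delta}^{(k)})_{k\in\N_0}$ from Lemma~\ref{pr10}, verify the three hypotheses using Lemma~\ref{pr10}(i)--(iii) together with $h_{i,-}=-h_{i,+}$ and the oddness of $\sin$, and read off the deterministic lower bound $(m-n)\varepsilon/2=n\varepsilon/2$. Your account is just slightly more explicit about why the disjoint supports turn $S^{\text{int}}$ into an additive functional on the family, but the argument and constants are identical.
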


\begin{proof}
Let $n\in\N$. We may apply Proposition~\ref{prop2} with $m=2n$, $b^* = 0 \in \R^{\N_0}$ and $g_{i,\delta} = (h^{(k)}_{i,\delta})_{k\in\N_0}$ for all $i\in\{1,\dots,2n\}$ and $\delta\in\{+,-\}$ to obtain the desired lower bound. Indeed, Lemma~\ref{pr10}(i),(ii) imply that the conditions (i),(ii) in Proposition~\ref{prop2} are satisfied. Furthermore, by Lemma~\ref{pr10}(iii) we obtain
\begin{align*}
S^\text{int}(g_{i,+}) - S^\text{int}(g_{i,-}) & = \frac{1}{\sqrt{2\pi}}\int_\R \bigl(\sin(h_{i,+}(x))-\sin(h_{i,-}(x)) \bigr)\cdot \exp(-\tfrac{x^2}{2})\,dx\\
& = \frac{\sqrt{2}}{\sqrt{\pi}}\int_\R \sin(h_{i,+}(x))\cdot \exp(-\tfrac{x^2}{2})\,dx\\
& \ge \frac{\sqrt{2}}{\sqrt{\pi}}\cdot \frac{\sin(\tfrac{1}{12})\cdot \exp(-4)}{3}\ \cdot \frac{\exp(-(u^{-1}(\max(n,u(0))))^2)}{n}
\end{align*}
for all $i\in\{1,\dots,2n\}$, which completes the proof.
\end{proof}

Clearly, Lemma~\ref{pr9} and Proposition~\ref{pr11} jointly imply Theorem \ref{mainThm2}.

\section*{Acknowledgement}
We are
grateful to Stefan Heinrich, Arnulf Jentzen and Erich Novak for stimulating discussions
on the topic of this paper.

\bibliographystyle{acm}
\bibliography{bibfile}

\end{document}